\newtheorem{theorem}{Theorem}[section]
\newtheorem{definition}[theorem]{Definition}
\newtheorem{lemma}[theorem]{Lemma}
\newtheorem{proposition}[theorem]{Proposition}
\newtheorem{corollary}[theorem]{Corollary}
\newtheorem{remark}[theorem]{Remark}
\newtheorem{assump}[theorem]{Assumptions}
\newcommand{\R}{{\mathbb R}}
\newcommand{\C}{{\mathcal C}}
\newcommand{\D}{{\mathcal D}}
\newcommand{\Z}{{\mathbb{Z}}}
\newcommand{\N}{{\mathbb N}}
\newcommand{\F}{{\mathcal F}}
\newcommand{\E}{{\mathbb E}}
\renewcommand{\P}{{\mathcal P}}
\newcommand{\B}{{\mathcal B}}
\newcommand{\eps}{\varepsilon}
\newcommand{\dist}{\text{dist}}
\def\wtil{\widetilde}
\def\Bk{{\mathcal B}}
\def\R{\mathbb{R}}
\def\N{\mathbb{N}}
\def\E{\mathbb{E}}
\def\F{\mathcal F}
\newcommand{\oeta}{\overline{\eta}}
\newcommand{\X}{\mathbb{X}}
\newcommand{\om}{\omega}
\newcommand{\pom}{{(\omega)}}
\newcommand{\PP}{\mathbb{P}}
\newcommand{\EE}{\mathbb{E}}
\def\sig{\sigma}
\def\lam{\lambda}
\def\Wk{{\mathcal W}}
\def\be{\begin{equation}}
\def\ee{\end{equation}}
\newcommand{\Conv}{\mathlarger{\mathlarger{\ast}}}
\title{Absolute continuity of non-homogeneous self-similar measures}
\author{Santiago Saglietti}
\address[S. Saglietti]{Faculty of Industrial Engineering and Management\\ Technion\\ Haifa, 3200003, Israel}
\email{saglietti.s@technion.ac.il}
\thanks{S.S. was partially supported by Iniciativa Cient\'\i fica Milenio NC120062 and the Israel Science Foundation research grant 1723/14}
\author{Pablo Shmerkin}
\address[P. Shmerkin]{Department of Mathematics and Statistics\\
Torcuato di Tella University and CONICET\\
Av. Figueroa Alcorta 7350 (1425), Buenos Aires\\
Argentina}
\email{pshmerkin@utdt.edu}
\thanks{P.S. was partially supported by Project PIP 11220150100355 (CONICET). Part of this work was completed while P.S. was visiting Mittag-Leffler institute as part of the program ``Fractal Geometry and Dynamics''. P.S. thanks the organizers and staff for financial support and for a stimulating atmosphere.}
\author{Boris Solomyak}
\address[B. Solomyak]{Department of Mathematics\\Bar-Ilan University\\Ramat-Gan, 5290002, Israel}
\email{bsolom3@gmail.com}
\thanks{B.S. was partially supported by the Israel Science Foundation grant 396/15}
\subjclass[2010]{Primary 28A78, 28A80, secondary 37A45, 42A38}
\keywords{absolute continuity, self-similar measures, random measures, convolutions}
\begin{document}

\begin{abstract}
We prove that self-similar measures on the real line are absolutely continuous for almost all parameters in the super-critical region, in particular confirming a conjecture of S-M. Ngai and Y. Wang. While recently there has been much progress in understanding absolute continuity for homogeneous self-similar measures, this is the first improvement over the classical transversality method in the general (non-homogeneous) case. In the course of the proof, we establish new results on the dimension and Fourier decay of a class of random self-similar measures.
\end{abstract}

\maketitle
	
\section{Introduction and main results}

\subsection{Non-homogeneous self-similar measures}

Given $\lam_1,\lam_2\in (0,1)$ and $p\in (0,1)$, one can form the self-similar measure $\nu_{\lam_1,\lam_2}^p$ constructed with contraction ratios $\lam_1,\lam_2$ and weight $p$. That is, $\nu_{\lam_1,\lam_2}^p$ is the only Borel probability measure $\nu$ such that
\[
\nu = p \,g_{1,\lam_1}\nu + (1-p) \,g_{2,\lam_2} \nu,
\]
where $g_{1,\lam}(x)=\lam x$ and $g_{2,\lam}(x)=\lam x+1$. Here, and throughout the article, $g\nu$ denotes the image measure: $g\nu(A)=\nu(g^{-1}A)$ for all Borel sets $A$. When $\lam_1=\lam_2$, the measure $\nu_\lam^p:=\nu_{\lam,\lam}^p$ is the classical (biased)  Bernoulli convolution.

Write
\[
s_{\lam_1,\lam_2}^p = \frac{p\log p+(1-p)\log(1-p)}{p\log\lam_1+(1-p)\log\lam_2}
\]
for the similarity dimension of $\nu_{\lam_1,\lam_2}^p$. It is well known that if $s_{\lam_1,\lam_2}^p<1$, then $\nu_{\lam_1,\lam_2}^p$ is always singular, even though if $\lam_1+\lam_2>1$ its support is an interval.

In the case of Bernoulli convolutions, it was recently proved by the second author \cite{Shmerkin16}  that there exists an exceptional set $E\subset (1/2,1)$ of zero Hausdorff dimension, such that $\nu_\lam^p$ is absolutely continuous whenever $\lam\in (1/2,1)\setminus E$ and $s_{\lam}^p:=s_{\lam,\lam}^p>1$ and, moreover, $\nu_\lam^p$ has a density in $L^q$ if $\lam^{q-1}> p^q+(1-p)^q$ (this range is sharp up to the endpoint). See Theorem 1.3, Theorem 9.1 and discussion afterwards in \cite{Shmerkin16} for details. This improved several earlier results  \cite{Solomyak95,PeresSolomyak96, Hochman14, Shmerkin14, ShmerkinSolomyak16} providing various bounds on the size of  exceptional sets for different notions of smoothness of $\nu_\lam^p$.  We remark, in particular, that in an early application of the so-called ``transversality method'', the third author already in 1995 established absolute continuity of $\nu_\lam^{1/2}$ for Lebesgue almost all $\lambda\in (1/2,1)$ \cite{Solomyak95}.  Also very recently, P. Varj\'{u} \cite{Varju16} established absolute continuity of $\nu_\lam^p$ for many explicit algebraic values of $\lambda$ near $1$.

In the general case $\lam_1\neq\lam_2$, much less is known. For $p=1/2$, J\"org Neunh\"auserer \cite{Neun_2001} and Sze-Man Ngai and Yang Wang \cite{NgaiWang05} proved that $\nu_{\lam_1,\lam_2}^{1/2}$ is absolutely continuous for almost all $\lam_1,\lam_2$ in a certain simply connected region which is very far from covering the whole super-critical parameter region $\lam_1\lam_2>1/4$ (which corresponds to $s_{\lam_1,\lam_2}^{1/2}>1$), and in particular is disjoint from a neighborhood of $(1,1)$. Ngai and Wang conjectured that, in fact, $\nu_{\lam_1,\lam_2}^{1/2}$ is absolutely continuous for almost all $\lam_1,\lam_2\in (0,1)$ such that $\lam_1\lam_2>1/4$.  This fits into the more general folklore conjecture that self-similar measures should be generically absolutely continuous in the super-critical regime. Recently, Hochman \cite{Hochman15} proved that $\nu_{\lam_1,\lam_2}^p$ has Hausdorff dimension $1$ for all $\lam_1,\lam_2$ such that $s_{\lam_1,\lam_2}^p>1$, outside of an exceptional set of Hausdorff dimension $1$, which is uniform in $p$. However, it does not seem possible to obtain absolute continuity from Hochman's approach.

The reason why the non-homogeneous case $\lam_1\neq\lam_2$ is much more difficult than the homogeneous one is that while $\nu_\lam^p$ is an infinite convolution of Bernoulli measures, $\nu_{\lam_1,\lam_2}^p$ is not. The convolution structure is crucial to all the works \cite{Solomyak95, PeresSolomyak96,Shmerkin14,ShmerkinSolomyak16, Shmerkin16}.

In this article we give a positive answer to the conjecture of Ngai and Wang, and in fact prove a stronger statement for any number of maps and arbitrary weights $\mathbf{p}$. Given $\lam=(\lam_1,\ldots,\lam_k)\in (0,1)^k$, translations $t=(t_1,\ldots,t_k)\in\R^k$, and a probability vector $\mathbf{p}=(p_1,\ldots,p_k)$, let $\nu=\nu_{\lam,t}^\mathbf{p}$ be the associated self-similar measure, i.e.
\begin{equation} \label{eq:simnu}
\nu = \sum_{i=1}^k p_i\, g_i\nu, \quad\text{where }g_i(x)=\lam_i x+t_i.
\end{equation}
Write
\[
s(\lam,\mathbf{p}) = \frac{\sum_{i=1}^k p_i\log(p_i)}{\sum_{i=1}^k p_i\log(\lam_i)}
\]
for the similarity dimension of $\nu_{\lam,t}^\mathbf{p}$, and recall that $\nu_{\lam,t}^{\mathbf{p}}$ is always singular if $s(\lam,\mathbf{p})<1$. Our main result is then the following.
\begin{theorem} \label{thm:abs-cont-non-hom}
Fix $k\ge 2$, distinct translations $t_1,\ldots,t_k\in\R$ and a probability vector $\mathbf{p}=(p_1,\ldots,p_k)$. There exists a set
\[
E_{\mathbf{p}} \subset \mathcal{R}_\mathbf{p} := \{ (\lam_1,\ldots,\lam_k)\in (0,1)^k: s(\lam,\mathbf{p}) >1\}
\]
of zero Lebesgue measure such that the following holds: for any $\lam\in \mathcal{R}_{\mathbf{p}}\setminus E_{\mathbf{p}}$, the self-similar measure $\nu_{\lam,t}^\mathbf{p}$ is absolutely continuous.
\end{theorem}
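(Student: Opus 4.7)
The main obstacle, as the authors emphasise, is the absence of an exact convolution structure $\nu_{\lam,t}^\mathbf{p} = \mu_N * (\text{rescaled } \nu_{\lam,t}^\mathbf{p})$ when $\lam_1,\ldots,\lam_k$ are not all equal. My plan is to recover such a structure in a random, approximate sense, and then feed the resulting random approximants into the $L^q$-flattening / Fourier-smoothing machinery of \cite{Shmerkin16}. Let $I = (I_n)_{n\ge 1}$ be i.i.d.\ with law $\mathbf{p}$, write $\Lambda_n(I) = \lam_{I_1}\cdots\lam_{I_n}$ and $\chi = \sum_i p_i\log\lam_i < 0$, and for a large parameter $N$ introduce the stopping time $\tau_N = \min\{n : \Lambda_n(I) \le e^{-N}\}$; by the LLN, $\tau_N/N \to 1/|\chi|$ almost surely, with CLT-scale fluctuations, and $\log\Lambda_{\tau_N}$ concentrates around $-N$. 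Writing the random variable $X\sim\nu_{\lam,t}^\mathbf{p}$ as $X = \sum_{j=0}^{\tau_N-1}\Lambda_j(I)\, t_{I_{j+1}} + \Lambda_{\tau_N}(I)\, Y$ with $Y$ an independent copy of $X$, and disintegrating over the joint law $\pi_N$ of $(\tau_N, \Lambda_{\tau_N})$, one obtains
\[
\nu_{\lam,t}^\mathbf{p} \;=\; \int \bigl(\sigma_{n,s} * S_s\, \nu_{\lam,t}^\mathbf{p}\bigr)\,d\pi_N(n,s),
\]
where $S_s$ denotes scaling by $s$ and $\sigma_{n,s}$ is the conditional law of $\sum_{j<n}\Lambda_j(I)\, t_{I_{j+1}}$ given $\{\tau_N = n,\ \Lambda_n(I) = s\}$. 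This realises $\nu_{\lam,t}^\mathbf{p}$ as an average of convolutions, with the random measures $\sigma_{n,s}$ playing the role of the discrete approximants $\mu_N$ from the homogeneous case.

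The next step is to establish, for Lebesgue-a.e.\ $\lam \in \mathcal{R}_\mathbf{p}$ and $\pi_N$-typical pairs $(n,s)$: (i) the Hausdorff dimension $\dim_H \sigma_{n,s}\to 1$ as $N \to \infty$, and (ii) $\sigma_{n,s}$ exhibits polynomial Fourier decay in a suitable averaged sense. Statement (i) should follow by adapting Hochman's exponential-separation dimension theorem \cite{Hochman15} to the conditioned random approximants, using $s(\lam,\mathbf{p}) > 1$ as the entropy input. Statement (ii) is the technical heart: I would combine an Erd\H{o}s--Kahane / transversality argument with the Gaussian concentration of $\tau_N$ and $\log \Lambda_{\tau_N}$ to reduce the Fourier analysis of $\sigma_{n,s}$ to that of a nearly-homogeneous discrete sum in a parameter depending on $(\lam, I)$, where classical transversality in $\lam$ can be applied. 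Plugging (i) and (ii) into an $L^q$-flattening statement in the spirit of \cite{Shmerkin16} yields that $\sigma_{n,s} * S_s \nu_{\lam,t}^\mathbf{p}$ has an $L^q$ density for $\pi_N$-a.e.\ $(n,s)$, and hence, by Fubini in the integral representation above, so does $\nu_{\lam,t}^\mathbf{p}$.

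The hardest part, I expect, is (ii): adapting the Fourier and transversality machinery to random, non-geometric scales $\{\Lambda_j(I)\}_{j<\tau_N}$. In the homogeneous setting of \cite{Shmerkin16} the scales form a geometric progression, and this is used crucially when invoking the Erd\H{o}s--Kahane method; here one must handle the extra randomness of the scale sequence and control three kinds of genericity simultaneously -- in $\lam$, in $I$, and in the frequency $\xi$ -- while keeping the exceptional parameter set Lebesgue-null inside $\mathcal{R}_\mathbf{p}$. A secondary subtlety is ensuring that the Fubini-type passage from ``$\sigma_{n,s} * S_s \nu_{\lam,t}^\mathbf{p}$ is a.c.\ for $\pi_N$-a.e.\ $(n,s)$'' to ``$\nu_{\lam,t}^\mathbf{p}$ is a.c.'' is compatible with a.e.\ $\lam$, which forces the estimates in (ii) to be quantitative in $\lam$ rather than merely qualitative.
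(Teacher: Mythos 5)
There is a genuine gap, and it sits exactly where you predict: step (ii). The stopping-time decomposition $X=\sum_{j<\tau_N}\Lambda_j(I)\,t_{I_{j+1}}+\Lambda_{\tau_N}(I)\,Y$ is a correct identity, and the passage from ``$\sigma_{n,s}*S_s\nu$ is a.c.\ for $\pi_N$-a.e.\ $(n,s)$'' to ``$\nu$ is a.c.'' is indeed harmless. But conditioning on $\{\tau_N=n,\ \Lambda_n(I)=s\}$ is a \emph{global} constraint on the word $(I_1,\ldots,I_n)$ (essentially fixing its type); under the resulting conditional law the word is exchangeable but the summands $\Lambda_j(I)t_{I_{j+1}}$ are \emph{not} independent, so $\sigma_{n,s}$ has no convolution structure and $\widehat{\sigma_{n,s}}$ has no product formula. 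The Erd\H{o}s--Kahane method is not a general Fourier-decay technique for discrete measures: it operates factor by factor on an infinite product $\prod_n|\widehat{\mu_n}(\xi)|$, showing that for most $\lam$ and every $\xi$ a positive proportion of factors is bounded away from $1$. Without independence there are no factors, and no substitute mechanism is proposed; ``reduce to a nearly-homogeneous discrete sum'' is precisely the missing idea rather than a step. (A secondary problem: $\sigma_{n,s}$ is finitely supported, so it can only exhibit decay up to frequency $\approx e^N$; to get genuine power decay you must pass to an infinite-depth object, and iterating the stopping time does not produce independent pieces because the successive stopping constraints interact.)

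The paper's resolution is a different randomization that is worth contrasting with yours. Instead of conditioning on one global stopping event, one groups the i.i.d.\ digits into blocks of a fixed length $r$ and conditions on the \emph{type of each block separately}. Given the resulting type sequence $\om$, the contraction ratio of each block is deterministic while the choice within each block is uniform and independent across blocks; hence the conditional measure $\eta_\lam^{(\om)}$ is an honest infinite convolution (a $1$-variable random self-similar measure satisfying \eqref{eq:dssr}), and $\nu_{\lam,t}^{\mathbf{p}}=\int\eta_\lam^{(\om)}\,d\PP(\om)$ with $\PP$ Bernoulli. This restores the product formula, after which $\eta_\lam^{(\om)}$ is split as $(\eta'_\lam)^{(\om)}*(\eta''_\lam)^{(\om)}$ (every $s$-th term versus the rest), a randomized Erd\H{o}s--Kahane argument gives power Fourier decay of $\eta'$ for a.e.\ $\lam$ and $\PP$-a.e.\ $\om$, an extension of Hochman's theorem to these random measures gives $\dim\eta''=1$, and the lemma of \cite{Shmerkin14} (power decay convolved with full dimension) concludes; the reduction from $(0,1)^k$ to one-parameter curves is done by the coarea formula. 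If you want to salvage your outline, note that your statement (i) can be bypassed entirely ($\dim_H\nu=1$ for a.e.\ $\lam$ already follows from \cite{Hochman15}, and $S_s\nu$ is the natural full-dimensional factor), so the whole burden is the Fourier decay of the discrete factor --- and for that you must first engineer conditional independence, which your conditioning does not provide.
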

In fact, we establish this result by proving that it holds for almost all $\lambda$ in  each curve $(\lam,\lam^{\beta_2},\ldots,\lam^{\beta_k})$, where the numbers $\beta_i>0$ are fixed; see Proposition \ref{prop:non-hom}.  Unfortunately, we do not obtain any estimates on the dimension of the set of exceptional parameters, nor any information about the density. Nevertheless, this is the first result for absolute continuity of non-homogeneous self-similar reasons extending beyond what is achievable by the transversality method of \cite{Solomyak95,PeresSolomyak96,Neun_2001, NgaiWang05}.

\subsection{Outline of proof}
\label{subsec:outline-of-proof}

We give a rough outline of the proof of Theorem \ref{thm:abs-cont-non-hom} for $k=2$. In this case the translations do not play any role, and we may assume that $t_1=0$, $t_2=1$. As mentioned before, $\nu_{\lam_1,\lam_2}^p$ is not a convolution. Indeed, if one wishes to choose a point $x \in \R$ at random according to $\nu^p_{\lambda_1,\lambda_2}$, then by the self-similarity relation in \eqref{eq:simnu}, it suffices to consider an i.i.d. sequence $\tilde{\om}:=(\tilde{\om}_n)_{n \in \N} \subseteq \{1,2\}$ of Bernoulli random variables with parameter $1-p$ and take $x$ as the unique element in $\R$ satisfying
\begin{equation}
\label{eq:xnu}
\{x\}= \bigcap_{n \in \N} B_{\tilde{\om}|n}
\end{equation} where $\tilde{\om}|n =(\tilde{\om}_1,\dots,\tilde{\om}_n) \in \{1,2\}^n$ and, for each $u \in \{1,2\}^n$, we put
\[
B_u:=g_{u_1} \circ \dots \circ g_{u_n} ([-R,R])
\]
for $R > 0$ sufficiently large so as to guarantee that $g_i([-R,R])\subset [-R,R]$ for $i=1,2$ (the fact that $\lambda_1,\lambda_2 < 1$ allows us to find such an $R$). From this description it follows that $\nu_{\lambda_1,\lambda_2}^p$ is the distribution of the random sum
\begin{equation}\label{eq:nusum}
\sum_{n \in \N} \left( \prod_{j=1}^{n-1} \lambda_{\tilde{\om}_j}\right) t_{\tilde{\om}_n}
\end{equation}
with the convention $\prod_{j=1}^0 \lambda_{\tilde{\om}_j} = 1$, from which one sees that $\nu_{\lambda_1,\lambda_2}^p$ is not a convolution since the summands in \eqref{eq:nusum} are not independent.
Nevertheless, there exists a decomposition
\begin{equation} \label{eq:nudecomp}
\nu_{\lam_1,\lam_2}^p = \int  \eta^\pom \,d\PP(\om),
\end{equation}
where the $\eta^\pom=\eta^\pom_{\lam_1,\lam_2,p}$ are random measures which are not strictly self-similar, but still possess a kind of stochastic self-similarity (see \eqref{eq:dssr} below) and, crucially, do have a convolution structure. This decomposition can be described as follows. Pick $r \in \N$ and notice that it makes no difference if in \eqref{eq:xnu} we take instead the intersection in ``blocks of $r$ bits'', i.e.
\begin{equation}
\label{eq:xnu2}
\{x\} = \bigcap_{n \in \mathbb{N}} B_{\tilde{\omega}|rn}.
\end{equation} This corresponds to thinking of $\tilde{\omega}$ as a sequence $\overline{\om}=(\overline{\om}_n)_{n \in \N}$ in $\left( \{ 1,2\}^r \right)^\mathbb{N}$, where each $\overline{\omega}_i$ is chosen independently according to the rule $\mathbf{P}(\overline{\omega}_i = u ) =  p^{(u)_1}(1-p)^{r-(u)_1}=:p_u$ for each $u \in \{1,2\}^r$, where we set $(u)_1:=| \{ i : u_i = 1 \}|$. Now, for each $k=0,\dots,r$ we define the weight
\[
q_k:= \sum_{u \in \{1,2\}^r : (u)_1=k} p_u = p^k(1-p)^{r-k}|\{u : (u)_1=k\}|.
\]
Then, from the conditional probability rule
\[
p_u=\mathbf{P}((\overline{\omega}_i)_1=(u)_1)\mathbf{P}(\overline{\omega}_i=u|(\overline{\omega}_i)_1=(u)_1) = q_{(u)_1} \mathbf{P}(\overline{\omega}_i=u|(\overline{\omega}_i)_1=(u)_1)
\]
it follows that each $\overline{\om}_i$ can be chosen in a $2$-step procedure, by first selecting the value of $\omega_i:=(\overline{\om}_i)_1$ according to the probability vector $q=(q_0,\dots,q_r)$ and then choosing the ``final'' value of $\overline{\om}_i$ uniformly among all possibilities $u$ with $(u)_1=\omega_i$. In conclusion, we may choose $x$ at random according to $\nu^p_{\lambda_1,\lambda_2}$ by following this $3$-step procedure:
\begin{enumerate}
	\item [i.] We choose first $\omega=(\omega_i)_{i \in \N} \in \{0,\dots,r\}^\N$ according to the Bernoulli product measure $\PP$ with marginals $q$.
	\item [ii.] Given $\omega$, we choose each $\overline{\omega}_i$ for each $i \in \N$ independently uniformly among all possibilities allowed by the value of $\omega_i$.
	\item [iii.] Having obtained $\overline{\omega}$, we take $x$ as in \eqref{eq:xnu2}.
\end{enumerate} If for each $\omega \in \{0,\dots,r\}^\N$ we define $\eta^\pom$ as the measure which selects a point $x$ at random according to (ii)+(iii) above, then it is clear that \eqref{eq:nudecomp} holds. To see that $\eta^{\pom}$ is indeed a convolution, we notice that by \eqref{eq:xnu2} the measure $\nu^p_{\lambda_1,\lambda_2}$ is the distribution of the random sum
\begin{equation}
\label{eq:xsum3}
\sum_{n \in \N} \left( \prod_{j=1}^{n-1} \lambda_{\overline{\om}_j}\right) g_{\overline{\om}_{n}}(0)
\end{equation} for $\lambda_{\overline{\om}_j}:= \lambda_{\tilde{\om}_{r(j-1)+1}}\dots \lambda_{\tilde{\om}_{rj}}$ and $g_{\overline{\om}_{n}}:=g_{\tilde{\om}_{r(j-1)}} \circ \dots \circ g_{\tilde{\om}_{rj}}$, and thus that $\eta^{\pom}$ is the distribution of the random sum in \eqref{eq:xsum3} given the values of the sequence $\omega$. But since the value of $\lambda_{\overline{\omega_j}}$ depends on $\overline{\omega}_j$ only through $\omega_j=(\overline{\omega}_j)_1$, it follows that $\eta^\pom$ is a convolution since the summands in \eqref{eq:xsum3} are independent once the value of $\omega$ is fixed.

We then prove absolute continuity of  $\nu_{\lam_1,\lam_2}^p$  by showing that $\eta^\pom$ is absolutely continuous $\PP$-almost surely (for a.e. $\lam_1,\lam_2$), and we do this by following the approach from \cite{Shmerkin14}. Namely, exploiting the convolution structure we decompose each $\eta^\pom =(\eta')^\pom*(\eta'')^\pom$, where $(\eta')^\pom$ and $(\eta'')^\pom$ are random self-similar measures in the same general class. More precisely, recall from \eqref{eq:xsum3} that $\eta^\pom$ is the distribution of a random sum $\sum_{n=1}^\infty X_n^\pom$ of independent discrete random variables (they also depend on $r$). We define $(\eta')^\pom$ as the distribution of $\sum_{s \mid n} X_n^\pom$, and $(\eta'')^\pom$ as the distribution of $\sum_{s \nmid n} X_n^\pom$, for a suitable $s\in\N$.

To conclude, we prove that $\PP$-almost surely $(\eta')^\pom$ has power Fourier decay and $(\eta'')^\pom$ has full Hausdorff dimension, provided $r$ and then $s$ were taken large enough. Once this is achieved, absolute continuity follows from a Lemma from \cite{Shmerkin14}.

Proving the required properties of  $(\eta')^\pom$ and $(\eta'')^\pom$ involves extending to this random setting a number of results that were known in the setting of deterministic self-similar measures: the Erd\H{o}s-Kahane method for obtaining power Fourier decay, and a theorem of Hochman \cite{Hochman14} giving mild sufficient conditions for a self-similar measure to have full dimension. While we follow the strategies of the deterministic case, obtaining these extensions involves a fair amount of work, so these will take up most of the paper.

\subsection{A class of random self-similar measures}

As explained above, our approach depends on the study of a class of random measures which we now introduce formally. They are closely related to $1$-variable fractals (also sometimes called homogeneous random fractals). The results we obtain for these measures may be of independent interest.

We will work with a finite set $I$ of iterated function systems of similarities $\Phi^{(i)}=(f_1^{(i)}, \dots, f_{k_i}^{(i)})$, $i\in I$. We assume throughout that each IFS is homogeneous and uniformly contracting, i.e. the maps $f^{(i)}_j$ are of the form
\[
f^{(i)}_j (x)= \lambda_i x + t^{(i)}_j
\]
for certain constants $\lambda_i \in (0,1)$ and $t^{(i)}_j \in \R$. We emphasize that we allow $k_i$ to be $1$ for some $i$, i.e. to have degenerate iterated function systems.  Notice that, since the maps $f_j^{(i)}$ are uniformly contractive, if $R>0$ is sufficiently large then $f_j^{(i)}([-R,R])\subset [-R,R]$ for all $i\in I$, $j\in\{1,\ldots, k_i\}$.

Given a sequence $\omega=(\omega_n)_{n \in \N} \in \Omega:=I^{\N}$, we define the space of words of length $n$ (possibly with $n=\infty$) with respect to $\omega$ by the formula
\[
\mathbb{X}^{(\omega)}_n:=  \prod_{j=1}^n \{1,\dots,k_{\omega_j}\}.
\]
Note that all $\mathbb{X}^{(\omega)}_n$ are subsets of a common tree $\X_n:=\prod_{j=1}^n \{1,\ldots,k_{\max}\}$, for $k_{\max}:=\max_{i\in I} k_i$. For each $n \in \N$ and $u \in \X_n^\pom$ we consider the interval
\begin{equation} \label{eq:def-symbolic-interval}
B^{(\omega)}_{u}:=f^{(\omega)}_u([-R,R]),
\end{equation}
where $f^{(\omega)}_u:=f^{(\omega_1)}_{u_1} \circ \dots \circ f^{(\omega_n)}_{u_n}$. Furthermore, we define the compact set
\[
\mathcal{C}^{(\omega)}:= \bigcap_{n \in \N} \bigcup_{u \in \mathbb{X}^{(\omega)}_n} B^{(\omega)}_{u}.
\]
Note that, for every $n$, we have the inclusion $B^{(\omega)}_{ul} \subset B^{(\omega)}_{u}$, for each $u \in \mathbb{X}^{(\omega)}_n$ and  $l \in \{1,\dots,k_{\omega_{n+1}} \}$ (where $ul$ denotes the concatenation of $u$ and $l$). In other words, these intervals are nested. Moreover, their diameters tend to zero as $n\to\infty$ uniformly over $u\in\mathbb{X}_n^\pom$. Alternatively, we have that  $\C^{\pom}=\Pi_{\omega}\left(\mathbb{X}^{(\omega)}_\infty\right)$, where $\Pi_\omega$ is the \emph{coding map} given by
\[
\{\Pi_\om(u)\} = \bigcap_{n=1}^\infty B^\pom_{u|n},
\]
where $u|n$ is the restriction of the infinite word $u \in \X^\pom_\infty$ to its first $n$ coordinates. Equivalently, the coding map is defined by the formula
\[
\Pi_\om(u):= \sum_{n \in \N} \left(\prod_{j=1}^{n-1}\lambda_{\om_j}\right)t^{(\om_n)}_{u_n}
\]
with the convention that $\prod_{j=1}^{0}\lambda_{\om_j} = 1$.

Given $u\in\X_n^\pom$, we also define the \emph{cylinder} $[u]_\om$ as the set of infinite words in $\X_\infty^\pom$ which start with $u$, and note that $\Pi_\om([u]_\om)\subset B_u^\pom$. We remark that we do not assume that the intervals $\{ B^{(\omega)}_{u}:u \in \mathbb{X}^{(\omega)}_n\}$ are disjoint or any other separation condition.  Also, even though $\mathcal{C}^{(\omega)}$ is defined for every $\omega \in \Omega$, \mbox{in the sequel} we will be drawing $\omega$ according to a probability measure $\PP$, which we shall refer to as the \textit{selection measure}.

We will not be interested in the sets $\mathcal{C}^{\pom}$ themselves, but rather in measures supported on them. For each $i\in I$, let $p_i=(p_1^{(i)},\ldots,p_{k_i}^{(i)})$ be a probability vector with strictly positive entries. On each $\X_\infty^\pom$ we can then define the product measure
\[
\oeta^\pom :=  \prod_{n=1}^\infty p_{\omega_n}.
\]
The projection of $\oeta^\pom$ via the coding map is a Borel probability measure $\eta^\pom$ supported on $\C^\pom$. Equivalently, we may define $\eta^\pom$ as the distribution of the random sum $\Pi_\om((U_n)_{n \in \N})$, where $(U_n)_{n \in \N}$ is a sequence of independent random variables satisfying for every $n \in \N$ and $j=1,\dots,k_{\om_n}$
\[
\mathbf{P}(U_n= j) = p^{(\om_n)}_j.
\]
In the deterministic case $|I|=1$, $\eta^\pom$ is simply a homogeneous self-similar measure on $\C^\pom$. Furthermore, notice that in general $\eta^\pom$ satisfies the following ``dynamic self-similarity'' relation
	\[
	\eta^\pom = \sum_{u \in \X^\pom_1} p_u^{(\om_1)} \cdot f^{(\om)}_{u}\eta^{(T\om)}
	\]
where $T$ denotes the left-shift on $\Omega$. Iterating this, we get
\begin{equation} \label{eq:dssr}
\eta^\pom = \sum_{u \in \X^\pom_k} p_u^{\om} \cdot f^{(\om)}_{u}\eta^{(T^k\om)}
\end{equation}
for each $k\in\N$, where $p_u^{\om}=p_{u_1}^{\om_1}\cdots p_{u_k}^{\om_k}$.

Finally, in the sequel we shall refer to the triple $\Sigma:= \left((\Phi^{(i)})_{i\in I},(p_i)_{i \in I},\PP\right)$ as the \textit{model} under consideration.

The following are our main results on the measures $\eta^\pom$.
\begin{theorem} \label{thm:exact}
Let $\Sigma$ be a model for which $\PP$ is $T$-invariant and ergodic. Then there exists $\alpha\in[0,1]$ such that for $\PP$-almost all $\om$, the measure $\eta^\pom$ has exact dimension $\alpha$, that is,
\[
\lim_{r\downarrow 0} \frac{\log(\eta^\pom (B(x,r)))}{\log r}  = \alpha
\]
for $\eta^\pom$-almost all $x$.
\end{theorem}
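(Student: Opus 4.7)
The plan is to view each $\eta^\pom$ as the push-forward via $\Pi_\omega$ of a Bernoulli measure on a skew-product system, obtain exact symbolic dimensionality by the Birkhoff ergodic theorem, and then transfer the result to the geometric side using the dynamic self-similarity \eqref{eq:dssr}. Form the skew product $\widehat\Omega := \{(\omega, u) : \omega\in\Omega,\ u\in \X_\infty^\pom\}$ equipped with $d\widehat{\PP}(\omega, u) := d\PP(\omega)\,d\oeta^\pom(u)$ and the shift $S(\omega, u) := (T\omega, \sigma u)$, where $\sigma$ is the coordinate shift. The product structure $\oeta^\pom = \prod_n p_{\omega_n}$ together with the $T$-invariance of $\PP$ makes $\widehat{\PP}$ an $S$-invariant probability measure; ergodicity of $\widehat{\PP}$ follows from that of $\PP$ via Kolmogorov's $0$--$1$ law applied fiberwise in $u$ (any $S$-invariant observable depends only on the tail of $u$ conditionally on $\omega$, hence reduces to a $T$-invariant function of $\omega$, which is $\PP$-a.s.\ constant). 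Applying Birkhoff's theorem to the bounded observables $(\omega, u) \mapsto -\log p^{(\omega_1)}_{u_1}$ and $\omega \mapsto -\log\lambda_{\omega_1}$ yields the $\widehat{\PP}$-almost sure convergences
\[
-\frac{1}{n}\log \oeta^\pom([u|n]_\omega) \to h, \qquad -\frac{1}{n}\log \prod_{k=1}^n \lambda_{\omega_k} \to \chi,
\]
for finite ergodic averages $h \geq 0$ and $\chi > 0$.

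The easy direction of the geometric transfer is immediate. For $\widehat{\PP}$-a.e.\ $(\omega, u)$ and $x := \Pi_\omega(u)$, the interval $B^\pom_{u|n}$ has diameter $\rho_n := 2R\prod_{k=1}^n \lambda_{\omega_k}$ and contains $x$, hence $B^\pom_{u|n} \subset \overline{B}(x, \rho_n)$ and $\eta^\pom(\overline{B}(x, \rho_n)) \geq \oeta^\pom([u|n]_\omega)$. Taking logs and dividing by $\log\rho_n < 0$ (reversing inequalities) one obtains
\[
\liminf_{r\downarrow 0} \frac{\log \eta^\pom(B(x, r))}{\log r} \leq \frac{h}{\chi},
\]
giving, together with the trivial bound $\leq 1$, a control from above on the lower local dimension at $\widehat{\PP}$-a.e.\ $(\omega, u)$.

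The main obstacle is the matching bound on the upper local dimension, which is non-trivial because without any separation condition a ball $B(x, \rho_n)$ may intersect many distinct level-$n$ cylinders, so that $\eta^\pom(B(x, \rho_n))$ need not be comparable to $\oeta^\pom([u|n]_\omega)$. The plan is to adapt Feng and Hu's conditional-entropy approach for exact dimensionality of deterministic self-similar measures to this random setting. Let $\xi_n$ be the partition of $\widehat\Omega$ into level-$n$ symbolic cylinders and let $\zeta_r$ be the partition of $\R$ into intervals of length $r$. Using \eqref{eq:dssr}, one shows that the conditional entropy $n \mapsto H_{\widehat{\PP}}(\xi_n \mid \Pi^{-1}\zeta_{\rho_n})$ is approximately sub-additive along orbits of $S$, so Kingman's sub-additive ergodic theorem forces $n^{-1} H_{\widehat{\PP}}(\xi_n \mid \Pi^{-1}\zeta_{\rho_n}) \to c$ for some $\widehat{\PP}$-a.s.\ constant $c \in [0, h]$. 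A Shannon--McMillan style argument then yields $\log \eta^\pom(B(x, \rho_n))/\log\rho_n \to (h - c)/\chi$ for $\widehat{\PP}$-a.e.\ $(\omega, u)$, and an elementary interpolation across scales $r \in (\rho_{n+1}, \rho_n)$ upgrades convergence along $\rho_n$ to a full limit in $r$. Hence $\eta^\pom$ has exact local dimension $\alpha = \min(1, (h - c)/\chi)$ for $\widehat{\PP}$-a.e.\ $(\omega, u)$; since $\alpha$ is $T$-invariant in $\omega$ through \eqref{eq:dssr}, ergodicity of $\PP$ forces it to be a deterministic constant, completing the proof.
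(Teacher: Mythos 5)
Your overall architecture is the same as the paper's: the skew product $\Omega\times\X_\infty$ with the measure $\widehat{\PP}=\int\oeta^\pom\,d\PP$, ergodicity of the extension, Birkhoff for the symbolic entropy $h$ and the Lyapunov exponent $\chi$, the easy inequality $B^\pom_{u|n}\subset \overline{B}(x,\rho_n)$, and then a Feng--Hu-type conditional-entropy argument for the matching bound. (Your ergodicity argument via the conditional Kolmogorov $0$--$1$ law is a clean alternative to the paper's cylinder-set computation and is fine.)

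The gap is in the hard direction, which you compress into ``Kingman forces $n^{-1}H_{\widehat{\PP}}(\xi_n\mid\Pi^{-1}\zeta_{\rho_n})\to c$'' followed by ``a Shannon--McMillan style argument then yields'' the pointwise limit. As written, $H_{\widehat{\PP}}(\xi_n\mid\Pi^{-1}\zeta_{\rho_n})$ is a single number, and convergence of normalized integrated entropies does not give the $\widehat{\PP}$-a.e.\ pointwise statement that exact dimensionality requires. If instead you mean the pointwise conditional information function, then its approximate subadditivity along $S$-orbits is precisely the delicate point: the conditioning partitions $\Pi^{-1}\zeta_{\rho_n}$ are not dynamically generated (they live at $\om$-dependent geometric scales, and without any separation hypothesis a ball of radius $\rho_{n+m}$ interacts with many level-$n$ cylinders), so neither Kingman nor the classical SMB theorem applies off the shelf, and you have not verified the subadditivity or the error control. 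The paper circumvents this entirely: it introduces the regular conditional probability of the symbolic point given its projection (Theorem \ref{thm:rcp}), defines $g_n(\om,u)=-\log\mathbf{P}(\overline{X}^\pom_1=u_1\mid X^\pom\in B_n(\om,u))$, proves the \emph{exact} telescoping identity $\sum_{k=0}^{n-1}g_{n-k}(T^k(\om,u))=-\tfrac{1}{1}\log\mathbf{P}(\overline{X}^\pom|n=u|n)+\log\mathbf{P}(X^\pom\in B_n(\om,u))$ using \eqref{eq:dssr} and the independence structure, and then applies Maker's ergodic theorem (Theorem \ref{thm:maker}) — not Birkhoff or Kingman — which requires both the a.e.\ convergence $g_n\to g_\infty$ (a martingale/differentiation statement from Theorem \ref{thm:rcp}(iii)) and the integrability of $\sup_n g_n$ (imported from Feng--Hu). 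These three ingredients are the technical core of the proof and are absent from your proposal; without them the step from the integrated entropy limit to the a.e.\ local dimension is unjustified. (Minor points: the $\min(1,\cdot)$ in your final formula is redundant once exact dimensionality on $\R$ is established, and the constants $h,\chi,c$ are already deterministic by ergodicity, so the closing invariance argument is not needed.)
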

We will call the value of $\alpha$ given by this theorem the \emph{dimension} of the model $\Sigma$, and denote it $\dim(\Sigma)$. 

The proof of Theorem \ref{thm:exact} will reveal also that the projections $\Pi_\om$ are almost surely dimension-conserving: see Corollary \ref{cor:dimension-conservation}.

Given a model $\Sigma$, we define the \textit{similarity dimension} $\textup{s-dim}(\Sigma)$ of $\Sigma$ as
\[
\textup{s-dim}(\Sigma) := \left(\int_\Omega \log (\lambda_{\om'_1}) d\PP (\om')\right)^{-1} \int_\Omega \sum_{j=1}^{k_{\om'_1}} p_j^{(\om'_1)} \log p_j^{(\om'_1)} d\PP (\om').
\]
As we will see in Section \ref{sec:exact}, this number is a ``candidate'' for the dimension $\dim(\Sigma)$, and it is always an upper bound for $\dim(\Sigma)$. For example, if $|I|=1$, it agrees with the standard similarity dimension of a self-similar measure. One way in which one can have a strict inequality $\dim(\Sigma)<\textup{s-dim}(\Sigma)$ is if there is an exact overlap, that is, if there exist $n \in \N$, $\om\in\Omega$ and $u,v \in \X^\pom_n$ such that $f^\pom_u = f^\pom_v$ and $u \neq v$.\footnote{This is equivalent to there being exact coincidences $f^\pom_u = f^\pom_v$ for $u$ and $v$ of not necessarily the same length.} There is also a trivial strict inequality if $\textup{s-dim}(\Sigma)>1$.

For deterministic self-similar measures, Hochman \cite[Theorem 1.1]{Hochman14} proved that the dimension equals the minimum between the similarity dimension and $1$ if a \emph{quantitative} non-overlapping condition holds. We obtain an analogous result for the random measures $\eta^\pom$. For $u,v \in \X_n^\pom$ define
\[
d^\pom(u,v):= |f^\pom_u(0) - f^\pom_v(0)|
\] and
\[
\Delta_n^\pom = \Delta_n^\pom(\Sigma) :=\left\{\begin{array}{ll} \min \{ d^\pom(u,v) : u,v \in \X_n^\pom, u \neq v\} & \text{ if $|\X^\pom_n| > 1$ } \\  0 & \text{ if $|\X^\pom_n|=1.$}\end{array}\right..
\]
Note that exact overlapping occurs for $\eta^\pom$ if and only if $\Delta_n^\pom = 0$ and $|\X_n^\pom|>1$ for some $n \in \N$ (equivalently, all sufficiently large $n$). The following is the extension of \cite[Theorem 1.1]{Hochman14} to our class of random self-similar measures.

\begin{theorem}\label{thm:sup-exp-conc} If $\PP$ is a globally supported Bernoulli probability measure\footnote{In the published version $\PP$ was an arbitrary ergodic shift-invariant probability measure. However, the proof implicitly used that the measure is Bernoulli in two places. The proof of the main Theorem 1.1. is not affected by this change, since it relies on the case of $\PP$ Bernoulli. For details, see ``Absolute continuity of self-similar measures on the plane''  by B. Solomyak and A. \'Spiewak, arXiv:2301.10620, which also contains an argument allowing one to recover Theorem 1.3
for an arbitrary ergodic shift-invariant probability measure.}
on $\Omega$, then any model $\Sigma$ with selection measure $\PP$ such that $k_i\ge 2$ for some $i \in I$ satisfies the following:

If $\dim(\Sigma) < \min\{1,\textup{s-dim}(\Sigma)\}$ then
	\[
	\frac{\log \Delta^{(\cdot)}_n}{n} \overset{\PP}{\longrightarrow} -\infty,
	\] i.e. for every $M > 0$ one has
	\[
	\lim_{n \rightarrow +\infty} \PP \left( \left\{ \om \in \Omega :  \frac{\log \Delta^{(\om)}_n}{n} \leq - M \right\} \right)=1.
	\]
\end{theorem}

Our final main result concerns Fourier decay of $\eta^\pom$. Write $\P_1$ for the Borel probability measures on the real line. The Fourier transform of $\mu\in\P_1$  is
\[
\widehat{\mu}(\xi)= \int e^{i\pi x\xi}\,d\mu(x).
\]
(We choose this slightly unusual normalization for technical reasons.) Denote
\[
\D_1(\sigma) = \{\mu\in \P_1:\, |\widehat{\mu}(\xi)| = O_\mu(|\xi|^{-\sigma})\}\quad \text{and}\quad \D_1 = \bigcup_{\sig>0} \D_1(\sig).
\]
That is, $\D_1$ is the set of measures with power Fourier decay. (We note that belonging to $\D_1(\sigma)$ guarantees a minimum decay rate for all large frequencies; faster decay for some or all frequencies is allowed.)

We note that if a model is such that for each $i$ all the translations $t_j^{(i)}, j\in\{1,\ldots,k_i\}$ are equal (this includes the possibility $k_i=1$), then the generated measures are all a single atom, so we have to exclude this class of models in order to have any chance of obtaining power Fourier decay.

\begin{definition} \label{def:non-degenerate}
Let us say that a model $\Sigma=\left((\Phi^{(i)})_{i\in I},(p_i)_{i\in I},\PP\right)$ is \emph{non-degenerate} if there exist $i\in I$ and $1\le u_1<u_2\le k_i$ such that $t_{u_1}^{(i)}\neq t_{u_2}^{(i)}$ (in particular, $k_i\ge 2$).
\end{definition}

Note that this definition is independent of the $\lam_i$ and the selection measure $\PP$. The next theorem asserts that almost all measures generated by ``nearly all'' non-degenerate models with Bernoulli selection measure have power Fourier decay. The precise statement is fairly technical, and we need to keep track of the measurability of the relevant sets in order to be able to reverse the order of quantifiers at a later point.

\begin{theorem} \label{thm:Fourier-decay}
Let $I$ be a finite set, and let $(\beta_i)_{i\in I}$ be strictly positive numbers. Also, let $\PP$ be a Bernoulli measure on $\Omega=I^\N$.

Then there exists a Borel set $\mathcal{G}\subset \Omega\times (0,1)$ such that the following hold:
\begin{enumerate}
\item [(i).]
For $\PP$-almost all $\om$,
\[
\dim_H\{ \lam: (\om,\lam)\notin\mathcal{G} \} = 0.
\]
\item [(ii).]
If $(\om,\lam)\in \mathcal{G}$,  and $\Sigma=\bigl((\Phi^{(i)})_{i\in I},(p_i)_{i\in I}, \PP\bigr)$ is any non-degenerate model with selection measure $\PP$, such that the contraction ratio of the maps in $\Phi^{(i)}$ is $\lam^{\beta_i}$, then $\eta^\pom\in\mathcal{D}_1$.
\end{enumerate}

\end{theorem}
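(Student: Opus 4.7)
The plan is to extend the classical Erdős-Kahane method for Fourier decay to the random self-similar setting. Since $\eta^\pom$ is the law of the sum $\Pi_\om((U_n)_n)$ of independent random variables, its Fourier transform factorizes as
\begin{equation*}
\widehat{\eta^\pom}(\xi) \;=\; \prod_{n=1}^\infty \widehat{\mu^{(\om_n)}}\bigl(\xi\,\lam^{S_{n-1}(\om)}\bigr), \qquad S_n(\om):=\sum_{j=1}^n \beta_{\om_j},
\end{equation*}
where $\mu^{(i)}:=\sum_{j=1}^{k_i} p_j^{(i)}\delta_{t_j^{(i)}}$. Power decay of $|\widehat{\eta^\pom}(\xi)|$ at rate $\sigma>0$ is thus equivalent to a quantitative bound on how often the phases $\xi\lam^{S_n(\om)}$ approach the resonant sets $Z_i:=\{y:|\widehat{\mu^{(i)}}(y)|=1\}$, which are discrete and locally lattice-like whenever the model is non-degenerate at index $i$.

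Fixing $\sig>0$ and following the Erdős-Kahane strategy: if $|\widehat{\eta^\pom}(\xi)|\ge |\xi|^{-\sig}$ for arbitrarily large $\xi$, then taking logarithms and using the estimate $-\log|\widehat{\mu^{(i)}}(y)|\asymp \dist(y,Z_i)^2$ near $Z_i$, the phase $\xi\lam^{S_{n-1}(\om)}$ must lie in a quantitatively $\sig$-controlled neighborhood of $Z_{\om_n}$ for most $n\le N\sim |\log\xi|/|\log\lam|$. Encoding the nearest element of $Z_{\om_n}$ to $\xi\lam^{S_{n-1}(\om)}$ as a ``digit'' $d_n$, the recursion $\xi\lam^{S_n}=\lam^{\beta_{\om_{n+1}}}(\xi\lam^{S_{n-1}})$ forces $d_{n+1}$ to depend on $d_n$, $\om_{n+1}$, and $\lam$ with only a bounded number of options. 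An Erdős-Kahane counting argument then bounds the number of admissible length-$N$ digit sequences, and hence the number of intervals of radius $\lam^{S_N(\om)}$ needed to cover the bad $\lam$-set, yielding box dimension $O(\sig)$ for this set given $\om$. Letting $\sig=1/k\to 0$ and intersecting over $k$ produces a Borel set whose $\om$-slice has Hausdorff dimension zero for $\PP$-almost every $\om$.

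The universal quantification over all non-degenerate models is obtained by observing that the Erdős-Kahane Diophantine condition depends on $\lam$, $\om$, and the exponents $\beta_i$ together with the qualitative lattice-like structure of the resonant sets, but \emph{not} on the specific translations or weights. For each $i\in I$ with $k_i\ge 2$, I would formulate a master Diophantine condition on $(\om,\lam)$ requiring that the orbit $\{\xi\lam^{S_n(\om)}\}_n$ avoid any lattice-like discrete subset of $\R$ at rate $\sig$, a condition strictly stronger than what any single model needs. Intersecting the finitely many resulting Borel sets over $i\in I$ yields the desired $\mathcal{G}\subset\Omega\times(0,1)$, which then works simultaneously for every non-degenerate model.

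The main obstacle will be handling the random increments $\beta_{\om_n}$ of the walk $S_n(\om)$: in the classical setting $S_n\equiv n$ reduces the problem to a pure base-$1/\lam$ expansion, whereas here the ``base'' at step $n$ varies with $\om_n$. Correctly bounding the branching of the admissible digit tree will require combining the classical counting with ergodic and large-deviation estimates for $S_n$, for which the Bernoulli assumption on $\PP$ is essential. Additional care is needed to ensure the measurability of $\mathcal{G}$, since its definition involves a quantifier over all frequencies $\xi$ which must be uniformized in a Borel-compatible manner; and to formulate the digit encoding so that it refers only to $(\om,\lam,\beta_i)$ and not to the specific translations or weights, which is what ultimately makes the universal quantification possible.
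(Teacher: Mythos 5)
Your outline follows essentially the same route as the paper: bound $|\widehat{\eta^\pom}(\xi)|$ by the product formula, run an Erd\H{o}s--Kahane digit-counting argument in the parameter $\theta=\lam^{-1}$ to cover the bad set of $\lam$ by few intervals, control the random block structure by a large-deviation/Borel--Cantelli estimate (this is exactly where the Bernoulli hypothesis on $\PP$ enters), handle measurability by restricting to rational frequencies, and let $\sigma\to 0$ along a countable sequence. Two steps, however, would fail as literally stated and need the paper's fixes.

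First, the ``master Diophantine condition'' requiring the orbit $\{\xi\lam^{S_n(\om)}\}_n$ to avoid \emph{every} lattice-like discrete set is vacuous: every real number lies in uncountably many lattices (e.g.\ $y\in y\Z$), so no $(\om,\lam)$ satisfies such a condition. The correct mechanism for universality over models is different and simpler. Since membership in $\mathcal{D}_1(\sigma)$ is invariant under affine rescaling of the measure, one may normalize a single designated non-degenerate symbol (say $t^{(1)}_1=0$, $t^{(1)}_2=1$) and then \emph{discard} all factors of the product except those at positions $n$ with $\om_n=1$, bounding each by $1-c_0\|\lam_{\om_1}\cdots\lam_{\om_{n-1}}\xi\|^2$ where $\|\cdot\|$ is distance to $\Z$ and $c_0$ depends only on a compact range $[p_{\min},p_{\max}]$ of weights. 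The resulting bad set is then defined purely in terms of $\|\Theta^{(M)}_k\tau\|$, i.e.\ in terms of $(\om,\lam,\beta)$ and a compact parameter box, with no reference to translations or weights; a finite intersection over which symbol plays the role of $1$, and a countable union over parameter boxes, gives $\mathcal{G}$. Tracking the resonant sets $Z_i$ of all symbols is both unnecessary and the source of the ill-posedness. Second, a digit recursion in which ``$d_{n+1}$ depends on $d_n$, $\om_{n+1}$ and $\lam$'' is circular, since $\lam$ is the unknown the digits are supposed to locate. One must eliminate $\theta$ between two consecutive approximate identities $\theta\approx (K_{n+1}/K_n)^{1/\beta(W_{M-n})}$, yielding a \emph{two-step} recursion in which $K_{n+2}$ is determined by $(K_n,K_{n+1})$ up to $O(B_n)$ choices, where $B_n$ is exponential in the lengths of the blocks between consecutive occurrences of the designated symbol; it is precisely the product of these $B_n$ over the exceptional positions that the waiting-time large-deviation lemma must control. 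With these repairs your sketch matches the paper's argument.
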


This article is organized as follows. In Section \ref{sec:notation} we fix some notational conventions. We establish Theorems \ref{thm:exact}, \ref{thm:sup-exp-conc}  and \ref{thm:Fourier-decay} in Sections \ref{sec:exact}, \ref{sec:dimension} and \ref{sec:EK}, respectively. We complete the proof of Theorem \ref{thm:abs-cont-non-hom} in Section \ref{sec:abs-cont}. Sections \ref{sec:exact}--\ref{sec:abs-cont} can be read independently of each other.

\section{Notation}
\label{sec:notation}

In this section we introduce notation to be used throughout the rest of the article. Let $\Sigma=\bigl((\Phi^{(i)})_{i\in I},(p_i)_{i\in I}, \PP\bigr)$ be a model as above, with  $\Phi^{(i)}=(f_1^{(i)}, \dots, f_{k_i}^{(i)})$, $i\in I$, and
\[
f^{(i)}_j (x)= \lambda_i x + t^{(i)}_j.
\]
We set $\lambda_{\max}=\max\{\lambda_i:i\in I\}$ and $\lambda_{\min}=\min\{\lambda_i:i\in I\}$.

We have already defined $f^{(\omega)}_u:=f^{(\omega_1)}_{u_1} \circ \dots \circ f^{(\omega_n)}_{u_n}$ and $p_u^{\om}=p_{u_1}^{\om_1}\cdots p_{u_k}^{\om_k}$ for $u\in\X_n^\pom$. We also write
\[
f^\pom_u(x) = \lam^\pom x + t_u^{\pom}
\]
for the contraction ratio and translation parts of $f^\pom_u(x)$. Note that, with this notation, we have
\be \label{eq:def-exp-concentration}
\Delta_n^\pom(\Sigma) = \min_{u\neq v\in\X_n^\pom} |t_u^\pom-t_v^\pom|,
\ee
provided $|\X_n^\pom|>1$.

We will often need to consider parametrized families of models, or several related models at once. These will be denoted by $\Sigma_{\lam}$, $\Sigma'$, etc, with corresponding adaptations for each of the components of the model.

The notation $\PP$ will always refer to the selection measure for a model, while $\EE$ will refer to expectation with respect to $\PP$. Probability and expectation for any other probability spaces will be denoted by $\mathbf{P},\mathbf{E}$, respectively.

We use Landau's $O(\cdot)$ notation: by $A=O(B)$ we mean that $|A| \le C B$ for some positive constant $C>0$. If $C$ is allowed to depend on some parameter, this is indicated as a subscript; thus, $A=O_R(B)$ means that $|A|\le C(R) B$ for some positive function $C(R)$.

The set of Borel probability measures on a set $A\subseteq \R$ will be denoted $\mathcal{P}(A)$. Finally, given $r>0$, $x \in \R$ and a measure $\nu\in\mathcal{P}(\R)$, we denote the push-forward of $\nu$ under the map $f(z)=rz+x$ by $r\cdot\nu+x$.

Other than the notation introduced up to this point, the notation in each of the later sections is independent of each other, so that the same symbol may refer to different concepts in different sections.

\section{Exact dimensionality and dimension conservation}
\label{sec:exact}

In this section we prove Theorem \ref{thm:exact}. Consider the product space
\[
\Omega \times \mathbb{X}_\infty = \left(I\times \{1,\dots,k_{\max}\}\right)^\N
\]
and let $\overline{\PP}$ be the probability measure on $\Omega \times \mathbb{X}_\infty$ given by the formula
\begin{equation} \label{eq:exact0}
\int_{\Omega \times \mathbb{X}_\infty} f(\om,u) d\overline{\PP}(\om,u) = \int_\Omega  \int_{\mathbb{X}_\infty} f(\om,u) d\overline{\eta}^\pom(u) d\PP(\om)
\end{equation} for any bounded measurable function $f: \Omega \times \mathbb{X}_\infty \to \R$.

For simplicity, we abuse the notation slightly and use $T$ to denote both the left-shift on $\Omega$ and on $\Omega \times \mathbb{X}_\infty$. Throughout this section, it will be convenient to define $p_j^{(i)}=0$ if $j>k_i$. We start by showing that $\overline{\PP}$ is $T$-invariant and ergodic if $\PP$ is.

\begin{lemma} \label{lem:ovP-ergodic}
 If $(\Omega,\mathcal{B}(\Omega),\PP,T)$ is an ergodic measure preserving system, then so is $(\Omega \times \mathbb{X}_\infty,\mathcal{B}(\Omega\times\mathbb{X}_\infty),\overline{\PP},T)$.
\end{lemma}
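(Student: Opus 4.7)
The plan is to prove invariance and ergodicity separately. For $T$-invariance, I would rely on the product structure of $\overline{\eta}^\pom = \prod_{n=1}^{\infty} p_{\om_n}$. Under the shift $T$ on $\X_\infty$, the pushforward of $\overline{\eta}^\pom$ is simply $\overline{\eta}^{(T\om)}$, since projecting out the first coordinate of a product measure leaves the remaining product. Therefore, for any bounded measurable $f$ on $\Omega\times\X_\infty$,
\[
\int f\circ T\, d\overline{\PP} = \int\!\!\int f(T\om, Tu)\,d\overline{\eta}^\pom(u)\,d\PP(\om) = \int\!\!\int f(T\om, u')\,d\overline{\eta}^{(T\om)}(u')\,d\PP(\om),
\]
and by $T$-invariance of $\PP$ this equals $\int f\, d\overline{\PP}$.

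For ergodicity, let $A\subseteq\Omega\times\X_\infty$ satisfy $T^{-1}A = A$, and let $A_\om := \{u\in\X_\infty : (\om,u)\in A\}$ denote the $\om$-section. The key first observation is that $T$-invariance of $A$ translates into the identity $A_\om = T^{-1}A_{T\om}$ as subsets of $\X_\infty$. Combining this with the fact that the pushforward of $\overline{\eta}^\pom$ under $T$ is $\overline{\eta}^{(T\om)}$ gives
\[
\phi(\om) := \overline{\eta}^\pom(A_\om) = \overline{\eta}^\pom\bigl(T^{-1}A_{T\om}\bigr) = \overline{\eta}^{(T\om)}(A_{T\om}) = \phi(T\om).
\]
So $\phi$ is $T$-invariant (and Borel measurable after the usual verification), hence $\PP$-a.s. equal to a constant $c$ by ergodicity of $\PP$. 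Since $\overline{\PP}(A) = \int \phi\, d\PP = c$, it suffices to prove $c\in\{0,1\}$.

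For this last step, I would invoke Kolmogorov's $0$-$1$ law. Iterating the identity $A_\om = T^{-1}A_{T\om}$ yields $A_\om = T^{-n}A_{T^n\om}$ for every $n\in\N$, so whether a word $u$ lies in $A_\om$ depends only on its coordinates $u_{n+1},u_{n+2},\ldots$ (with $\om$ held fixed). Thus $A_\om$ belongs to the tail $\sigma$-algebra $\bigcap_n \sigma(u_{n+1},u_{n+2},\ldots)$ of $\X_\infty$. Under the product measure $\overline{\eta}^\pom$ the coordinate random variables are independent, so Kolmogorov's $0$-$1$ law forces $\phi(\om)=\overline{\eta}^\pom(A_\om)\in\{0,1\}$ for $\PP$-a.e.\ $\om$. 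Therefore $c\in\{0,1\}$, which completes the proof.

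The only mildly delicate point is the joint measurability of $\phi$ (so that its integral makes sense and the ergodic-invariance argument is valid); this follows from Fubini applied to the indicator $\I_A$, using that $\om\mapsto\overline{\eta}^\pom$ is a measurable family of probability measures on $\X_\infty$. Everything else is a clean combination of shift invariance of $\PP$, the tower structure of the product measures $\overline{\eta}^\pom$, and the tail $0$-$1$ law.
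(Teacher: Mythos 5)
Your proof is correct, but it takes a genuinely different route from the paper's. The paper verifies both invariance and ergodicity directly on the generating semi-algebra of cylinder sets $[\om,u]$, using the criterion of \cite[Theorem 1.17]{Walters82}: it shows that the Ces\`aro averages $\frac{1}{n}\sum_{j=0}^{n-1}\overline{\PP}(C\cap T^{-j}D)$ converge to $\overline{\PP}(C)\,\overline{\PP}(D)$ for cylinders $C,D$, which reduces at once to the corresponding statement for $\PP$ because the fiber factors $p_{u_1}^{(\om_1)}\cdots p_{u_m}^{(\om_m)}$ pull out of the average. You instead treat $(\Omega\times\mathbb{X}_\infty,\overline{\PP},T)$ as a skew product over the base $(\Omega,\PP,T)$: an invariant set $A$ produces the shift-invariant fiber-mass function $\phi(\om)=\overline{\eta}^{(\om)}(A_\om)$ via the section identity $A_\om=T^{-1}A_{T\om}$ and the push-forward relation $T\overline{\eta}^{(\om)}=\overline{\eta}^{(T\om)}$; ergodicity of the base makes $\phi$ constant, and the constant is forced into $\{0,1\}$ because iterating the section identity places $A_\om$ in the tail $\sigma$-algebra of the product measure $\overline{\eta}^{(\om)}$, where Kolmogorov's $0$--$1$ law applies. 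Both arguments are complete. The paper's computation is more elementary (no disintegration, no $0$--$1$ law) and shows as a by-product that $\overline{\PP}$ inherits averaged mixing from $\PP$; your version cleanly separates the two mechanisms at work (base ergodicity plus fiberwise tail triviality) and would generalize to any measurable family of fiber measures with trivial tail field, not only Bernoulli products. The delicate points you flag---measurability of $\phi$ and of the sections $A_\om$---are indeed the only ones needing care, and your treatment of them is adequate.
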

\begin{proof}
Given $(\om,u)\in I^n\times \{1,\ldots,k_{\max} \}^n$, let $[\om,u]$ denote the corresponding cylinder set:
\[
[\om,u] = \{ (\om',u')\in \Omega\times\mathbb{X}_\infty: \om'_{i}=\om_{i}, u'_i=u_i \text{ for } i=1,\ldots,n\}.
\]
Cylinder sets form a semi-algebra that generates $\mathcal{B}(\Omega\times\mathbb{X}_\infty)$. It follows from \eqref{eq:exact0} that
\begin{align*}
\overline{\PP}([\om,u]) &= \PP([\om]) \, p_{u_1}^{(\om_1)} \cdots p_{u_n}^{(\om_n)},\\
\overline{\PP}(T^{-1}[\om,u]) &= \PP(T^{-1}[\om]) p_{u_1}^{(\om_1)} \cdots p_{u_n}^{(\om_n)},
\end{align*}
so $\overline{\PP}$ is $T$-invariant. Likewise, if $[\om,u]$, $[\om',u']$ are cylinders of lengths $m,m'$ respectively, then for any $j> m$,
\[
\overline{\PP}([\om,u]\cap T^{-j}([\om',u'])) = \PP([\om]\cap T^{-j}[\om']) p_{u_1}^{(\om_1)} \cdots p_{u_m}^{(\om_m)}\,\cdot\, p_{u'_1}^{(\om'_1)} \cdots p_{u'_{m'}}^{(\om'_{m'})},
\]
so that, using \cite[Theorem 1.17]{Walters82},
\begin{align*}
\lim_{n\to\infty}\frac1n \sum_{j=0}^{n-1} &\overline{\PP}([\om,u]\cap T^{-j}[\om',u'])\\
 &= p_{u_1}^{(\om_1)} \cdots p_{u_m}^{(\om_m)}\,\cdot\, p_{u'_1}^{(\om'_1)} \cdots p_{u'_{m'}}^{(\om'_{m'})}\lim_{n\to\infty}\frac1n \sum_{j=0}^{n-1} \PP([\om]\cap T^{-j}([\om']))\\
  &= p_{u_1}^{(\om_1)} \cdots p_{u_m}^{(\om_m)}\,\cdot\, p_{u'_1}^{(\om'_1)} \cdots p_{u'_{m'}}^{(\om'_{m'})}\PP([\om])\PP([\om'])\\
  &= \overline{\PP}([\om,u])\overline{\PP}([\om',u']),
\end{align*}
which, relying on \cite[Theorem 1.17]{Walters82} again, shows that $\overline{\PP}$ is $T$-ergodic.
\end{proof}

We now start the proof of Theorem \ref{thm:exact}. We follow the ideas in \cite{furstenberg2008ergodic,FalconerJin14} which deal with related, but different, models of random measures. We shall begin by showing that the measures $\overline{\eta}^\pom$ are exact-dimensional.
\begin{lemma}\label{lema:exact} If $\PP$ is ergodic, then for $\overline{\PP}$-almost every pair $(\om,u) \in \Omega \times \mathbb{X}_\infty$
	\[
	\lim_{n \rightarrow +\infty} \frac{- \log( \overline{\eta}^\pom([u|n]_\om) )}{n} = \int_{\Omega \times \mathbb{X}_\infty} -\log (p^{(\om'_1)}_{u'_1}) d\overline{\PP}(\om',u')=:\alpha_1.
	\]
\end{lemma}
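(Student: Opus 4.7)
The proof is essentially an application of Birkhoff's ergodic theorem to the dynamical system $(\Omega \times \mathbb{X}_\infty, \overline{\PP}, T)$, whose ergodicity has just been established in Lemma~\ref{lem:ovP-ergodic}. First, I would unpack the cylinder measure. Since $\overline{\eta}^\pom = \prod_{n=1}^\infty p_{\om_n}$ is a product measure on $\X_\infty^\pom$, for any $u \in \X_\infty^\pom$ and any $n \in \N$,
\[
\overline{\eta}^\pom([u|n]_\om) = \prod_{j=1}^n p_{u_j}^{(\om_j)},
\]
so that
\[
-\log\bigl(\overline{\eta}^\pom([u|n]_\om)\bigr) = \sum_{j=1}^n -\log(p_{u_j}^{(\om_j)}) = \sum_{j=0}^{n-1} \phi\bigl(T^j(\om,u)\bigr),
\]
where $\phi(\om,u) := -\log(p_{u_1}^{(\om_1)})$.

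The second step is to verify that $\phi \in L^1(\overline{\PP})$, so that Birkhoff's theorem applies. Unfolding the definition of $\overline{\PP}$,
\[
\int_{\Omega\times\mathbb{X}_\infty} \phi \, d\overline{\PP} = \int_\Omega \sum_{j=1}^{k_{\om_1}} p_j^{(\om_1)} \bigl(-\log p_j^{(\om_1)}\bigr) \, d\PP(\om).
\]
This is the $\PP$-average of the Shannon entropy of the probability vectors $p_{\om_1}$. Since $I$ is finite, each $p_j^{(i)}$ is strictly positive, and each vector $p_i$ is finite-dimensional, there is a uniform bound on these entropies, so the integral is finite and equals $\alpha_1$.

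Finally, applying Birkhoff's ergodic theorem to the ergodic system $(\Omega\times\mathbb{X}_\infty,\overline{\PP},T)$ from Lemma~\ref{lem:ovP-ergodic} with the integrable observable $\phi$ yields, for $\overline{\PP}$-almost every $(\om,u)$,
\[
\lim_{n \to \infty} \frac{1}{n}\sum_{j=0}^{n-1} \phi\bigl(T^j(\om,u)\bigr) = \int \phi \, d\overline{\PP} = \alpha_1,
\]
which is exactly the claimed convergence. I do not expect any real obstacle here; the only minor point requiring care is integrability of $\phi$, which is immediate from the finiteness of $I$ and the strict positivity of the weight vectors $p_i$. All the real structural work has already been done in Lemma~\ref{lem:ovP-ergodic} (showing that $\overline{\PP}$ is $T$-ergodic), and this statement is the standard entropy-theoretic consequence that one expects for a product measure over an ergodic base.
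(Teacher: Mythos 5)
Your proposal is correct and follows essentially the same route as the paper: rewrite $-\log\overline{\eta}^\pom([u|n]_\om)$ as a Birkhoff sum of the observable $(\om,u)\mapsto -\log p^{(\om_1)}_{u_1}$, check integrability (which, as you note, reduces to the uniform bound on the entropies $H(p_{\om_1})$, given that $u_1\le k_{\om_1}$ for $\overline{\PP}$-a.e.\ $(\om,u)$), and apply the ergodic theorem for $(\overline{\PP},T)$ using Lemma~\ref{lem:ovP-ergodic}. No gaps.
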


\begin{proof} The lemma is a straightforward consequence of the ergodic theorem for the pair $(\overline{\PP},T)$, since for any $(\om,u) \in \Omega \times \mathbb{X}_\infty$ one has $\overline{\eta}^\pom([(u_1,\dots,u_n)]_\om) = \prod_{i=1}^n p^{(\om_i)}_{u_i}$, so that
	\begin{equation} \label{eq:exact1}
	\frac{ \log( \overline{\eta}^\pom([u|n]_\om)) }{n} = \frac{1}{n} \sum_{i=1}^n \log( p^{(\om_i)}_{u_i}) = \frac{1}{n} \sum_{i=0}^{n-1} f(T^i(\om,u)),
	\end{equation} where $f: \Omega \times \mathbb{X}_\infty \to \R$ is the function given by $f(\om,u):= \log(p^{(\om_1)}_{u_1})$. We observe that, since $
	u_1\in\{1,\ldots,k_{\om_1}\}$ for $\overline{\mathbb{P}}$-a.e. $(\om,u)$, then $f$ is $\overline{\PP}$-integrable so that the ergodic theorem can truly be applied.
\end{proof}

\begin{remark}\label{rem:exact} Since $\overline{\PP}$ is given by \eqref{eq:exact0}, it follows from Lemma \ref{lema:exact} that for $\PP$-almost every $\om \in \Omega$ the measure $\overline{\eta}^\pom$ is exact-dimensional in the sense that the limit in the left-hand side of \eqref{eq:exact1} exists for $\overline{\eta}^\pom$-almost every $u$ and is independent of $u$. We say then that the exact dimension of $\overline{\eta}^\pom$ exists and equals $\textup{dim}\, \overline{\eta}^\pom :=\alpha_1$.
\end{remark}

 In the sequel, it will be convenient to consider the following joint construction of $(\overline{\eta}^\pom)_{\om \in \Omega}$. First, let $(O,\F,\mathbf{P})$ be a probability space on which one has defined an array $\{ U^{(i)}_n : i\in I\,,\,n \in \N\}$ of independent random variables, each $U^{(i)}_n$ with distribution $p_i$ for every $n \in \N$. Then, for each $\om \in \Omega$ consider the random sequence $\overline{X}^\pom=(\overline{X}^\pom_n)_{n \in \N}$ given by the formula $\overline{X}^\pom_n:= U^{(\om_n)}_n$ for all $n \in \N$. Define also for each $\om \in \Omega$ and $n \in \N_0$ the random variables $T^n X^\pom := \Pi_{T^n \om}( T^n \overline{X}^\pom)$, where we write $T$ to denote both the left-shift on $\Omega$ and on $\X_\infty$.
 For simplicity, we denote $T^0 X^\pom$ by $X^\pom$.
 Let us observe that, for each $\om \in \Omega$ and $n \in \N$, this construction has the following properties:
\begin{enumerate}
	\item [PI.] $\overline{X}^\pom$ and $X^\pom$ have distribution $\overline{\eta}^\pom$ and $\eta^\pom$, respectively.
	\item [PII.] $T^n\overline{X}^\pom$ and $T^nX^\pom$ have distribution $\overline{\eta}^{(T^n\pom)}$ and $\eta^{(T^n\pom)}$, respectively.
	\item [PIII.] The random vector $(\overline{X}^\pom_1,\dots,\overline{X}^\pom_n)$ is independent of $T^n \overline{X}^\pom$.
\end{enumerate}
Moreover, one has the following classical result on the existence of a regular conditional probability of $\overline{X}^\pom$ given $X^\pom$; see \cite{Simmons12} for a clean proof.
\begin{theorem}\label{thm:rcp} For all $\om\in\Omega$ there exists a function $Q^\pom: \R \times \B(\X^\pom_\infty)  \rightarrow [0,1]$ such that:
	\begin{enumerate}
		\item [(i).] For $\eta^\pom$-almost every $x$ the map $Q^\pom(x,\cdot)$ is a probability measure on $(\X^\pom_\infty,\B(\X^\pom_\infty))$ supported on the fiber $\Pi^{-1}_\om(\{x\})= \{u \in \X^\pom_\infty: \Pi_\om(u)=x\}$.
		\item [(ii).] For every $B \in \B(\X^\pom_\infty)$ the mapping $Q^\pom(\cdot,B)$ is Borel measurable and satisfies
		\[
		\mathbf{P}( \overline{X}^\pom \in B) = \int Q^\pom (X^\pom,B) d\mathbf{P}.
		\]
\item [(iii).] For each $B \in  \B(\X^\pom_\infty)$, one has that for $\eta^\pom$-almost every $x$
		\[
		Q^\pom(x,B) = \lim_{r \rightarrow 0} \mathbf{P}(\overline{X}^\pom \in B\,|\,X^\pom \in B(x,r)). 
		\]
	\end{enumerate}
	\end{theorem}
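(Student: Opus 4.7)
The plan is to invoke the standard disintegration theorem for Polish spaces to obtain parts~(i) and~(ii), and then deduce~(iii) from the Besicovitch differentiation theorem. Fix $\om\in\Omega$. The space $\X^\pom_\infty$, being a countable product of finite sets in the product topology, is compact metric and hence Polish; the coding map $\Pi_\om:\X^\pom_\infty\to\R$ is Lipschitz (the intervals $B_u^\pom$ shrink geometrically in $|u|$) and therefore Borel. Because both $\X^\pom_\infty$ and $\R$ are Polish, the classical disintegration theorem for regular conditional distributions (e.g., Kallenberg, \emph{Foundations of Modern Probability}, Theorem~6.4) produces a map $Q^\pom:\R\times\B(\X^\pom_\infty)\to[0,1]$ such that $Q^\pom(x,\cdot)$ is a probability measure for each $x$, $Q^\pom(\cdot,B)$ is Borel measurable for each $B$, and the identity in~(ii) holds.

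To verify that $Q^\pom(x,\cdot)$ is supported on the fiber $\Pi_\om^{-1}(\{x\})$ for $\eta^\pom$-almost every $x$, I would consider the Borel set
\[
G=\bigl\{(x,u)\in\R\times\X^\pom_\infty:\Pi_\om(u)=x\bigr\},
\]
which is closed since $\Pi_\om$ is continuous. Extending~(ii) from indicators of product sets to all bounded Borel functions on $\R\times\X^\pom_\infty$ by the standard monotone class argument, and using $\Pi_\om(\overline{X}^\pom)=X^\pom$ almost surely, one obtains
\[
1=\mathbf{P}\bigl((X^\pom,\overline{X}^\pom)\in G\bigr)=\int_\R Q^\pom\bigl(x,\Pi_\om^{-1}(\{x\})\bigr)\,d\eta^\pom(x),
\]
so $Q^\pom(x,\Pi_\om^{-1}(\{x\}))=1$ for $\eta^\pom$-almost every $x$. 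Redefining $Q^\pom(x,\cdot)$ on the remaining null set as any fixed point mass supported on $\Pi_\om^{-1}(\{x\})$ yields a version satisfying~(i) as stated.

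For~(iii), fix $B\in\B(\X^\pom_\infty)$ and set $g_B(x):=Q^\pom(x,B)$, a bounded Borel function. Applying~(ii) with $A=B(x,r)$ gives
\[
\mathbf{P}\bigl(\overline{X}^\pom\in B\,\big|\,X^\pom\in B(x,r)\bigr)=\frac{1}{\eta^\pom(B(x,r))}\int_{B(x,r)}g_B\,d\eta^\pom
\]
whenever $\eta^\pom(B(x,r))>0$, so the claim reduces to showing that these ball averages converge to $g_B(x)$ for $\eta^\pom$-almost every $x$. The main obstacle is that $\eta^\pom$ need not be doubling (doubling indeed fails for many non-homogeneous self-similar measures), so the usual Hardy--Littlewood maximal-function approach is unavailable. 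However, Besicovitch's differentiation theorem applies to \emph{every} Radon measure on $\R^n$ without any doubling hypothesis, and it directly supplies the desired convergence, completing the proof.
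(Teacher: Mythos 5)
Your proof is correct. Note that the paper does not actually prove Theorem \ref{thm:rcp}: it is stated as a classical fact with a pointer to \cite{Simmons12}, so there is no in-text argument to compare against; your write-up is a legitimate self-contained derivation. Parts (i)--(ii) via the disintegration theorem for Polish spaces, the monotone-class upgrade of the disintegration identity to bounded Borel functions on the product, and the observation that $\{(x,u):\Pi_\om(u)=x\}$ is closed (continuity of $\Pi_\om$ is all that is needed here, not Lipschitz-ness) are all standard and correctly executed. For (iii), your reduction to the $\eta^\pom$-a.e.\ convergence of the ball averages of $g_B$ is exactly right, and invoking the Besicovitch differentiation theorem --- which holds for arbitrary Radon measures on $\R^n$ with no doubling hypothesis --- is the appropriate tool; this is genuinely needed, since martingale convergence along a refining sequence of partitions would only give (iii) along that sequence rather than along all balls $B(x,r)$. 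Two cosmetic remarks: the identity you use in (ii) and (iii) is the full kernel property $\mathbf{P}(\overline{X}^\pom\in B,\,X^\pom\in A)=\int_A Q^\pom(x,B)\,d\eta^\pom(x)$ for all Borel $A$, which is what the disintegration theorem delivers (the displayed equation in (ii) is just the case $A=\R$); and the redefinition of $Q^\pom(x,\cdot)$ on the exceptional null set in (i) is unnecessary (the statement is only an almost-everywhere one) and slightly delicate as written, since for $x\notin\C^\pom$ the fiber $\Pi_\om^{-1}(\{x\})$ is empty and carries no point mass.
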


Let us now return to the proof of Theorem \ref{thm:exact}. We will show that in fact $\alpha$ is given by
	\begin{equation} \label{eq:value-exact-dim}
	\alpha =  \frac{\int_{\Omega \times \mathbb{X}_\infty} \left[ -\log (p^{(\om'_1)}_{u'_1}) + \log \left( \mathbf{P}( \overline{X}^{(\om')}_{1} = u'_1 | X^{(\om')}= \Pi_{\om'}(u'))\right) \right] d\overline{\PP}(\om',u')}{\int_\Omega -\log(\lambda_{\om'_1}) d\PP(\om')}.
	\end{equation}	
Here $\mathbf{P}( \overline{X}^{(\om')}_{1} = u'_1 | X^{(\om')}= \Pi_{\om'}(u'))$ is defined as in the right-hand side of Theorem \ref{thm:rcp}(iii), i.e. it equals
\[
\lim_{r\to 0} \mathbf{P}( \overline{X}^{(\om')}_{1} = u'_1 | X^{(\om')} \in B( \Pi_{\om'}(u'),r)).
\]
In other words, $\dim(\Sigma)$ equals its similarity dimension, minus a quantity (the quotient of  ``fiber entropy'' and the Lyapunov exponent) that measures, in some sense, the size of the overlaps. To show this, for each $n \in \N$ and $u \in \mathbb{X}_\infty$ define
\[
g_n(\om,u):= \left\{\begin{array}{ll}-\log \left( \mathbf{P}( \overline{X}^\pom_1 = u_1 | X^\pom \in B_n(\om,u))\right) & \text{ if $u \in \X^\pom_\infty$} \\ \\ 0 & \text{ otherwise},\end{array}\right. 
\]
where $B_n(\om,u):=B_n(\om,\Pi_\om(u))=B(\Pi_\om(u),2R\lambda_{\om_1}\dots\lambda_{\om_n})$. It is not difficult to check that each $g_n$ is indeed $\B(\Omega \times \X_\infty)$-measurable.
Furthermore, let
\[
 g_\infty(\om,u):= \lim_{n\to\infty} g_n(\om,u),
\]
whenever $u \in \X^\pom_\infty$ and the limit exists, and set $g_\infty(\om,u)=0$ otherwise. Then  $g_\infty$ is measurable (since the $g_n$ are), and it follows from Theorem \ref{thm:rcp}(iii) and the definition \eqref{eq:exact0} that, for $\overline{\PP}$-almost all $(\om,u)$,
\[
g_n(\om,u) \to g_\infty(\om,u) =  -\log \left( \mathbf{P}( \overline{X}^\pom_1 = u_1 | X^\pom = \Pi_\om(u) )\right) .
\]
Furthermore, by the proof of  \cite[Proposition~3.5]{FengHu09} (see also \cite[Proposition~2.3]{FalconerJin14}), there exists a constant $K > 0$ independent of $\om$ such that
\[
\int_{\mathbb{X}^\pom_\infty} \left[ \sup_{n \in \N_0} g_n(\om,u)\right] d\overline{\eta}^\pom(u) \leq H (p_{\om_1}) + K \leq \log k_{\max} + K.
\]

Now, we invoke a result of Maker \cite{Maker40}.

\begin{theorem} \label{thm:maker} Let $(X,\B,\PP,T)$ be a measure-preserving system and $(g_n)_{n \in \N}$ be a sequence of integrable functions on $(X,\B,\PP)$. If $g_n(x) \rightarrow g_\infty(x)$ for $\PP$-a.e. $x \in X$ and $\sup_{n \in \N} |g_n|$ is integrable, then for $\PP$-a.e. $x$
	\[
	\lim_{n \rightarrow +\infty} \frac{1}{n} \sum_{k=0}^{n-1} g_{n-k}(T^k x) = \overline{g}_\infty(x),
	\] where
	\[
	\overline{g}_\infty(x)= \lim_{n \rightarrow +\infty} \frac{1}{n} \sum_{k=0}^{n-1} g_{\infty}(T^k x).
	\] Furthermore, if $(X,\B,\PP,T)$ is ergodic then $\overline{g}_\infty(x)=\PP(g_\infty)$ for $\PP$-a.e. $x \in X$.
\end{theorem}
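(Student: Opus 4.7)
The plan is to reduce Maker's theorem to Birkhoff's pointwise ergodic theorem combined with a monotone truncation argument that exploits the integrable envelope $H:=\sup_n |g_n|+|g_\infty|$ (which is integrable since, by Fatou, $|g_\infty|\le \sup_n|g_n|$). Since $g_\infty$ is integrable, Birkhoff applied to it gives $\tfrac{1}{n}\sum_{k=0}^{n-1}g_\infty(T^kx)\to \overline{g}_\infty(x)$ $\PP$-a.s. Hence it suffices to show
$$
\frac{1}{n}\sum_{k=0}^{n-1} h_{n-k}(T^kx)\longrightarrow 0 \quad \PP\text{-a.s.},\qquad h_m:=g_m-g_\infty.
$$

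The key device is the decreasing envelope $H_M(x):=\sup_{m\ge M}|h_m(x)|$. By hypothesis $H_M\downarrow 0$ pointwise a.e., and $0\le H_M\le 2H$, so dominated convergence yields $\EE[H_M]\downarrow 0$. For any fixed $M$, I would split the Ces\`aro sum as
$$
\frac{1}{n}\sum_{k=0}^{n-1}|h_{n-k}(T^kx)| \le \frac{1}{n}\sum_{k=0}^{n-M-1} H_M(T^kx) + \frac{1}{n}\sum_{k=n-M}^{n-1} H(T^kx),
$$
using that when $n-k\ge M+1$ the bound $|h_{n-k}|\le H_M$ is in force. Birkhoff applied to the integrable function $H_M$ shows that the first sum converges a.s. to $\EE[H_M\mid \mathcal I](x)$, where $\mathcal I$ is the invariant $\sigma$-algebra (the missing final $M$ summands in the Birkhoff average are negligible by the boundary estimate below). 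The second sum has only $M$ terms and is at most $\tfrac{M}{n}\max_{0\le j<n} H(T^jx)$. Since $T$ preserves $\PP$, the bound $\sum_n\PP(H(T^n\cdot)>\epsilon n)=\sum_n\PP(H>\epsilon n)\le \epsilon^{-1}\EE[H]<\infty$ together with Borel--Cantelli gives $H(T^nx)/n\to 0$ a.s., so the second sum vanishes as $n\to\infty$ for each fixed $M$.

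Combining the two bounds, for $\PP$-a.e. $x$ and every $M\in\N$,
$$
\limsup_{n\to\infty}\frac{1}{n}\sum_{k=0}^{n-1}|h_{n-k}(T^kx)|\le \EE[H_M\mid\mathcal I](x).
$$
The conditional dominated convergence theorem (with majorant $2H$) gives $\EE[H_M\mid\mathcal I]\downarrow 0$ a.s., and letting $M\to\infty$ through a countable sequence establishes the main claim. In the ergodic case the limit $\overline{g}_\infty$ is $T$-invariant, hence $\PP$-a.s. constant; the constant is $\EE[g_\infty]$ by integrating the Birkhoff identity against $\PP$.

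The only delicate point is the simultaneous control of the two blocks in the split: Birkhoff handles the leading block because the mean of the envelope $H_M$ can be made arbitrarily small uniformly in $n$, while the final $M$ summands are controlled not by decay of $h_m$ (there is none for small $m$) but by the generic fact that $H(T^nx)/n\to 0$ for any integrable $H$ under a measure-preserving transformation. Without the integrable majorant $\sup_n|g_n|$ one could not close the argument, which is why this hypothesis is essential to Maker's statement.
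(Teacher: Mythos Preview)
The paper does not prove this statement; it is simply cited as a classical result of Maker \cite{Maker40} and then applied. Your argument is correct and is in fact the standard proof: reduce to Birkhoff by subtracting $g_\infty$, control the bulk of the Ces\`aro sum via the decreasing tail envelopes $H_M=\sup_{m\ge M}|g_m-g_\infty|$ (to which Birkhoff applies and whose conditional expectations tend to zero by conditional dominated convergence), and handle the $M$ boundary terms using the fact that $H(T^n x)/n\to 0$ a.s.\ for any integrable $H$ under a measure-preserving map. One minor remark: you implicitly pass from $H(T^nx)/n\to 0$ to $\tfrac{1}{n}\max_{0\le j<n}H(T^jx)\to 0$; this is routine (for any $\epsilon>0$ pick $N$ with $H(T^jx)<\epsilon j\le \epsilon n$ for $j\ge N$, and the finitely many terms $j<N$ contribute $O(1/n)$), but worth a line.
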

It follows from Lemma \ref{lem:ovP-ergodic}, Theorem \ref{thm:maker} and our previous discussion that if $\PP$ is ergodic, then for $\overline{\PP}$-almost every $(\om,u)$
\begin{equation} \label{eq:limit-fiber-entropy}
\lim_{n \rightarrow \infty} \frac{1}{n}\sum_{k=0}^{n-1} g_{n-k}(T^k (\om,u)) = -\int_{\Omega \times \X_\infty} \log \left( \mathbf{P}( \overline{X}^{(\om')}_{1} = u'_1 | X^{(\om')} = \Pi_{\om'}(u'))\right) d\overline{\PP}(\om',u').
\end{equation}
Let us compute the term $g_{n-k}(T^k (\om,u))$ for each $k=0,\dots,n-1$. Note that if $u \in \X^\pom_\infty$ then
\begin{align*}
g_{n-k}(T^k (\om,u)) &= -\log \left( \mathbf{P}( \overline{X}^{(T^k \om)}_1 = (T^ku)_1 | X^{(T^k \om)} \in B_{n-k}(T^k (\om,u)))\right)\\
\\
& = -\log \left( \frac{\mathbf{P}( \overline{X}^{(T^k \om)}_1 = (T^ku)_1 , X^{(T^k \om)} \in B_{n-k}(T^k (\om,u)))}{\mathbf{P}(X^{(T^k \om)} \in B_{n-k}(T^k (\om,u)))}\right)\\
\\
& = -\log \left( \frac{\mathbf{P}( \overline{X}^\pom_{k+1}=u_{k+1}, T^k X^\pom  \in B_{n-k}(T^k (\om,u)))}{\mathbf{P}(T^k X^\pom \in B_{n-k}(T^k (\om,u)))}\right)
\end{align*} where, for the last inequality, we use property (PII) of the construction of the $(\overline{X}^\pom)_{\om \in \Omega}$. Now, since the events in the last line depend only on $T^k \overline{X}^\pom$, using property (PIII)  we can rewrite the last line as
\[ -\log\left(\frac{\mathbf{P}( \overline{X}^\pom|k+1 =u|k+1\,,\, T^k X^\pom  \in B_{n-k}(T^k (\om,u)))}{\mathbf{P}(\overline{X}^\pom|k = u|k\,,\,T^k X^\pom \in B_{n-k}(T^k (\om,u)))}\right)
\] by multiplying and dividing by $\mathbf{P}(\overline{X}^\pom|k=u|k)$ inside the logarithm. Furthermore, notice that
\begin{equation}
\label{eq:equality}
\{\overline{X}^\pom|k = u|k\,,\,T^k X^\pom \in B_{n-k}(T^k (\om,u))\} = \{ \overline{X}^\pom|k = u|k\,,\,X^\pom \in B_{n}(\om,u)\}.
\end{equation} Indeed, the equality in \eqref{eq:equality} follows at once upon noticing that for all $u \in \X^\pom_\infty$
\begin{equation}\label{eq:equality1}
f^\pom_{u|k}( \Pi_{T^k \om}(T^k u)) = \Pi_\om (u)
\end{equation} and for every $v,v' \in \R$
\begin{equation} \label{eq:equality2}
|f^\pom_{u|k}(v) - f^\pom_{u|k}(v')|=\lambda_{\om_1}\cdots	\lambda_{\om_k}|v-v'|.
\end{equation} Thus, we obtain
\begin{equation} \label{eq:equality6}
g_{n-k}(T^k (\om,u)) = -\log\left(\frac{\mathbf{P}( \overline{X}^\pom|k+1 =u|k+1\,,\, X^\pom  \in B_{n}( \om,u))}{\mathbf{P}(\overline{X}^\pom|k = u|k\,,\, X^\pom \in B_{n}(\om,u))}\right)
\end{equation} so that for all $n \in \N$ the average $A_n(\om,u)=\frac{1}{n}\sum_{k=0}^{n-1} g_{n-k}(T^k (\om,u))$ telescopes to yield
\begin{align}
A_n(\om,u) &= -\frac{\log \mathbf{P}( \overline{X}^\pom|n =u|n\,,\, X^\pom  \in B_{n}( \om,u))}{n} + \frac{\log \mathbf{P}( X^\pom \in B_n(\om,u))}{n} \nonumber\\ \nonumber
\\ & = -\frac{\log \mathbf{P}( \overline{X}^\pom|n =u|n)}{n} + \frac{\log \mathbf{P}( X^\pom \in B_n(\om,u))}{n} \label{eq:equality3}
\end{align} where, for the last equality, we use that $\{\overline{X}^\pom|n = u|n\} \subseteq \{X^\pom \in B_n(\om,u)\}$, a fact which follows again from \eqref{eq:equality1} and \eqref{eq:equality2} upon noticing that the image of $\Pi_{T^n \om}$ is contained in $[-R,R]$. By property (PI) of the construction of the $(\overline{X}^\pom)_{\om \in \Omega}$, the first term in the right-hand side of \eqref{eq:equality3} converges to $\alpha_1$ as $n \rightarrow +\infty$ for $\overline{\PP}$-almost every $(\om,u)$. Therefore, by Lemma \ref{lema:exact} and \eqref{eq:limit-fiber-entropy}, we conclude that $\overline{\PP}$-almost surely
\[
\lim_{n \rightarrow +\infty} \frac{-\log \mathbf{P}( X^\pom \in B_n(\om,u))}{n} = \alpha_1-\alpha_2,
\]
where $\alpha_2$ is the right-hand side of \eqref{eq:limit-fiber-entropy}.

Now, recall that $B_n(\om,u)=B(\Pi_\om(u),2R\lambda_{\om_1}\dots\lambda_{\om_n})$ and also note that
\[
\lim_{n\to\infty}\frac{-\log(2R\lambda_{\om_1}\dots\lambda_{\om_n})}{n} = \int_\Omega -\log(\lambda_{\om'_1}) d\PP(\om')
\]
for $\PP$-almost every $\om$ by the ergodic theorem. Thus, from (PI) and the representation of $\overline{\PP}$ in \eqref{eq:exact0}, we deduce that $\eta^\pom$ is exact-dimensional for $\PP$-almost every $\om$, with  exact dimension given by \eqref{eq:value-exact-dim}. This concludes the proof.

The following corollary is a straightforward consequence of the proof of Theorem \ref{thm:exact}.

\begin{corollary} \label{cor:dimension-conservation} If $\PP$ is ergodic then the projection $\Pi_\om$ is \textit{dimension-conserving} for $\PP$-almost every $\om \in \Omega$, i.e. the measures $\overline{\eta}^\pom$ and $\eta^\pom$ are both exact-dimensional and, furthermore, for $\eta^\pom$-almost every $x \in \R$ the measure $Q^\pom_x:=Q^\pom(x,\cdot)$ is also exact-dimensional and satisfies the equality
	\[
	\textup{dim}\, \overline{\eta}^\pom = \textup{dim}\, Q^\pom_x + \textup{dim}\, \eta^\pom \left(\int_\Omega -\log(\lambda_{\om'_1}) d\PP(\om')\right).
	\]
\end{corollary}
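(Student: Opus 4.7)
The exact-dimensionality of $\overline{\eta}^\pom$ is already Lemma \ref{lema:exact}/Remark \ref{rem:exact}, with symbolic dimension $\alpha_1$, and the exact-dimensionality of $\eta^\pom$ is Theorem \ref{thm:exact}. What remains is to establish the exact-dimensionality of the fiber measures $Q^\pom_x$ together with the additive identity. My plan is to prove that for $\overline{\PP}$-a.e. $(\om,u)$,
$$\lim_{n\to\infty} \frac{-\log Q^\pom_{\Pi_\om(u)}([u|n]_\om)}{n} \;=\; \alpha_2,$$
where $\alpha_2$ is the right-hand side of \eqref{eq:limit-fiber-entropy}. Via the disintegration $\overline{\eta}^\pom=\int Q^\pom_x\,d\eta^\pom(x)$ and the representation \eqref{eq:exact0} of $\overline{\PP}$, this will give that $Q^\pom_x$ is exact-dimensional with symbolic dimension $\alpha_2$ for $\eta^\pom$-a.e.\ $x$; the conservation relation then reduces to the trivial $\alpha_1 = \alpha_2 + (\alpha_1-\alpha_2)$, since the proof of Theorem \ref{thm:exact} identifies $\alpha_1-\alpha_2$ as the symbolic rate of $\eta^\pom$.

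The key step is a ``conditional self-similarity'' identity for the fiber measures: for $\overline{\PP}$-a.e.\ $(\om,u)$ and every $k\ge 0$,
$$\mathbf{P}\bigl(\overline{X}^\pom_{k+1}=u_{k+1}\,\bigm|\,X^\pom=\Pi_\om(u),\,\overline{X}^\pom|k=u|k\bigr) \;=\; \mathbf{P}\bigl(\overline{X}^{(T^k\om)}_1=u_{k+1}\,\bigm|\,X^{(T^k\om)}=\Pi_{T^k\om}(T^k u)\bigr).$$
This is where the stochastic self-similarity genuinely enters. It is a consequence of property (PIII) together with \eqref{eq:equality1}--\eqref{eq:equality2}: conditionally on $\overline{X}^\pom|k=u|k$ one has $X^\pom = f^\pom_{u|k}(\Pi_{T^k\om}(T^k\overline{X}^\pom))$, while $T^k\overline{X}^\pom$ is independent of $\overline{X}^\pom|k$ with distribution $\overline{\eta}^{(T^k\om)}$; further conditioning on $X^\pom=\Pi_\om(u)$ therefore amounts to conditioning $\Pi_{T^k\om}(T^k\overline{X}^\pom)=\Pi_{T^k\om}(T^k u)$, which via Theorem \ref{thm:rcp}(iii) matches the right-hand side.

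Once the conditional self-similarity identity is in hand, the chain rule for conditional probabilities telescopes to
$$-\log Q^\pom_{\Pi_\om(u)}\bigl([u|n]_\om\bigr) \;=\; \sum_{k=0}^{n-1} g_\infty\bigl(T^k(\om,u)\bigr),$$
where $g_\infty$ is the function defined before \eqref{eq:limit-fiber-entropy}. Since $(\Omega\times \X_\infty,\overline{\PP},T)$ is ergodic by Lemma \ref{lem:ovP-ergodic} and $g_\infty\le\sup_n g_n$ is $\overline{\PP}$-integrable by the uniform-in-$\om$ bound already quoted from \cite{FengHu09}, Birkhoff's ergodic theorem supplies the sought limit $\int g_\infty\,d\overline{\PP}=\alpha_2$, finishing the proof.

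The main obstacle is the rigorous verification of the conditional self-similarity identity: the regular conditional probability $Q^\pom_x$ is defined only for $\eta^\pom$-a.e.\ $x$, so the substitutions $X^\pom=\Pi_\om(u)$ and $\overline{X}^\pom|k=u|k$ cannot be made pointwise and must instead be obtained as limits of honest conditional probabilities on positive-measure events $\{X^\pom\in B_m(T^k\om,T^k u)\}$, with the limits aligned under the shift $T^k$. This is essentially the same telescoping that produces \eqref{eq:equality6}. Everything else in the argument is a direct application of the ergodic theorem and of the ingredients already assembled in the proof of Theorem \ref{thm:exact}.
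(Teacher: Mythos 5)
Your proposal is correct and follows essentially the same route as the paper: the ``conditional self-similarity identity'' you isolate is exactly what one obtains by letting $n\to\infty$ in \eqref{eq:equality6}, the telescoped sum $\sum_{k=0}^{n-1}g_\infty(T^k(\om,u))$ is identified with $-\log Q^\pom_{\Pi_\om(u)}([u|n]_\om)$ via Theorem \ref{thm:rcp}(iii), and Birkhoff's theorem for the ergodic system $(\overline{\PP},T)$ of Lemma \ref{lem:ovP-ergodic} gives the limit $\alpha_2$. The integrability of $\sup_n g_n$ and the final bookkeeping $\alpha_1=\alpha_2+(\alpha_1-\alpha_2)$ are handled just as in the paper.
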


\begin{proof}
By taking $n \rightarrow +\infty$ in \eqref{eq:equality6} we obtain for $\overline{\PP}$-almost every $(\om,u)$ that
\begin{align*} 
g_{\infty}(T^k(\om,u)) &= -\log (\mathbf{P}( \overline{X}^\pom|k+1 =u|k+1\,|\,X^\pom=\Pi_\om(u)))\\
&  + \log (\mathbf{P}( \overline{X}^\pom|k =u|k\,|\,X^\pom=\Pi_\om(u))).
\end{align*}
In particular, we see that
\[
\frac{1}{n}\sum_{k=0}^{n-1} g_\infty( T^k(\om,u)) = -  \frac{\log (\mathbf{P}( \overline{X}^\pom|n =u|n\,|\,X^\pom=\Pi_\om(u)))}{n} 
\]
which, by the ergodic theorem for the pair $(\overline{\PP},T)$, implies that for $\PP$-a.e. $\omega$ we have
\begin{equation} \label{eq:dimexactfinal}
\alpha_2=\overline{\PP}(g_\infty)=\lim_{n \rightarrow +\infty} - \frac{\log (\mathbf{P}( \overline{X}^\pom|n =u|n\,|\,X^\pom=\Pi_\om(u)))}{n}
\end{equation}
for $\overline{\eta}^\pom$-a.e. $u \in \X_\infty$. Now, let us take $\omega \in \Omega$ such that \eqref{eq:dimexactfinal} holds $\overline{\eta}^\pom$-almost surely. Using (iii) in Theorem \ref{thm:rcp}, one can show that, for $\overline{\eta}^\pom$-almost every $u$, the equality
\[
Q^\pom(\Pi_\om(u),[u|n]_\om) = \mathbf{P}( \overline{X}^\pom|n =u|n\,|\,X^\pom=\Pi_\om(u))
\]
holds for all $n \in \N$. From this and \eqref{eq:dimexactfinal} we deduce that for $\eta^\pom$-a.e. $x$ the measure $Q^\pom_x$ is exact-dimensional and, furthermore, satisfies $\textup{dim}\, Q^\pom_x  = \alpha_2$. The corollary now follows from \eqref{eq:value-exact-dim} and Remark \ref{rem:exact}.
\end{proof}

\section{Dimension, entropy and super-exponential concentration}\label{sec:result}
\label{sec:dimension}

\subsection{Preliminaries on entropy and entropy dimension}\label{secentropia}

In this section we will prove Theorem \ref{thm:sup-exp-conc}. Throughout this section, we work with a fixed model $\Sigma:= \left((\Phi^{(i)})_{i\in I},(p_i)_{i \in I},\PP\right)$ for which $\PP$ is a 
Bernoulli globally supported measure. See the footnote to the statement of Theorem \ref{thm:sup-exp-conc}.

We start by reviewing some definitions and facts related to entropy. The Shannon entropy of a probability measure $\nu$ with respect to a countable partition $\mathcal{E}$  is given by the formula
\[
H(\nu,\mathcal{E}):= - \sum_{E \in \mathcal{E}} \nu(E)\log \nu(E),
\] where the logarithm is (from now onwards) to base $2$, and $0 \log 0 = 0$. The conditional entropy with respect to the countable partition $\mathcal{F}$ is then defined as
\[
H(\nu,\mathcal{E}|\mathcal{F}):= \sum_{F \in \mathcal{F}:\nu(F)>0} \nu(F)H(\nu_F,\mathcal{E})
\] where $\nu_F:= \frac{1}{\nu(F)}\nu|_F$ is the conditional measure on $F$. For a probability vector $q=(q_1,\dots,q_k)$ we write
\[
H(q):= - \sum_{i=1}^k q_i \log q_i.
\]
Finally, we write $H_n(\nu):= \frac{1}{n}H(\nu,\mathcal{D}_{n})$ for the normalized $n$-scale entropy of $\nu$, where $\mathcal{D}_n$ is the family of $n$-dyadic intervals given by
\[
\mathcal{D}_n = \left\{ \left[\frac{j}{2^n}, \frac{j+1}{2^n}\right) : j \in \Z\right\}.
\]

Below we collect some standard properties of entropy we are to use in the sequel. We denote the total variation distance between Borel probability measures by $d_{TV}$, and recall that it is defined as
\[
d_{TV}(\nu_1,\nu_2) := \sup_{A \in \B(\R)} |\nu_1(A)-\nu_2(A)|.
\]

\begin{proposition}\label{propentro} The entropy $H$ satisfies the following properties:
\begin{enumerate}
	\item [i.] If $\nu$ is supported on $k$ elements of $\mathcal{E}$ then $H(\nu,\mathcal{E}) \leq \log k$.
	\item [ii.] If $\mathcal{E}$ refines $\mathcal{F}$ then $	H(\nu,\mathcal{E})= H(\nu,\mathcal{F}) + H(\nu,\mathcal{E}|\mathcal{F})$.
	\item [iii.] Both $H(\cdot,\mathcal{E})$ and $H(\cdot,\mathcal{E}|\mathcal{F})$ are concave.
	\item [iv.] If each element of $\mathcal{E}$ intersects at most $k$ elements of $\mathcal{F}$ and vice versa then
	\[
	|H(\nu,\mathcal{E}) - H(\nu,\mathcal{F})|=O(\log k)
	\] independently of $\nu$. In particular, if $\nu = \lambda \cdot \tilde{\nu} + x$ with $C^{-1} < \lambda < C$ then
	\[
	|H(\nu,\mathcal{D}_n) - H(\tilde{\nu},\mathcal{D}_n)| = O_C(1)
	\] independently of $n$, $\lambda$ and $x$.
	\item [v.] If $\mu,\nu \in \P([-R,R])$ then for all $n \in \N$
	\[
	H(\mu \ast \nu,\D_n) \geq H(\mu,\D_n) - O_R(1).
	\]
	\item [vi.] Given $\varepsilon \in (0,\frac{1}{2})$ there exists $\delta > 0$ such that if $\nu,\tilde{\nu}$ are probability measures with $d_{TV}(\nu,\tilde{\nu})< \delta$ then for any partition $\mathcal{E}$ with $k$ elements
	\[
	|H(\nu,\mathcal{E}) - H(\tilde{\nu},\mathcal{E})| < \varepsilon \log k + H(\varepsilon).
	\] In particular, if $\nu,\tilde{\nu} \in \mathcal{P}([-R,R])$ are such that $d_{TV}(\nu,\tilde{\nu})< \delta$, then
	\[
	|H_m(\nu) - H_m(\tilde{\nu})| < \varepsilon\left(\frac{\log 2R}{m} + 1\right) + \frac{H(\varepsilon)}{m}.
	\]
\end{enumerate}
\end{proposition}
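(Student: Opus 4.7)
The plan is to establish each of (i)--(vi) using standard tools from information theory: concavity of $x\mapsto -x\log x$, the chain rule, the log-sum inequality, and a Fano-type total variation estimate. These are all classical identities for Shannon entropy, so no major obstacle is present; the only steps requiring a little care are (iii) (concavity of conditional entropy) and (vi) (the quantitative continuity in total variation), while the remaining items reduce to bookkeeping with the definitions.

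For (i) and (ii), I would apply Jensen's inequality to $-x\log x$ subject to $\sum_i p_i=1$ (yielding the uniform distribution as maximiser), and expand $H(\nu,\mathcal{E})$ by grouping each $E\in\mathcal{E}$ according to the unique $F\in\mathcal{F}$ containing it, using $\nu(E)=\nu(F)\nu_F(E)$. For (iii), concavity of $H(\cdot,\mathcal{E})$ is immediate from the concavity of $-x\log x$. For the conditional version, given $\nu=\lambda\nu_1+(1-\lambda)\nu_2$, one writes $\nu_F$ as a convex combination of $\nu_{1,F}$ and $\nu_{2,F}$ with weights $\lambda\nu_1(F)/\nu(F)$ and $(1-\lambda)\nu_2(F)/\nu(F)$; applying concavity of $H(\cdot,\mathcal{E})$ cell-by-cell and reorganising the weighted sum over $F$ yields the desired inequality.

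For (iv), the cross-intersection hypothesis together with (i) bounds $H(\nu,\mathcal{E}\vee\mathcal{F}\mid\mathcal{E})$ and $H(\nu,\mathcal{E}\vee\mathcal{F}\mid\mathcal{F})$ by $\log k$, since each conditional measure on a cell of $\mathcal{E}$ (resp.\ $\mathcal{F}$) is supported on at most $k$ cells of the common refinement. Applying the chain rule (ii) twice then gives $|H(\nu,\mathcal{E})-H(\nu,\mathcal{F})|\leq\log k$. The corollary for affine images follows because an interval of length $\lambda/2^n$ with $C^{-1}<\lambda<C$ meets $O_C(1)$ cells of $\mathcal{D}_n$, and conversely. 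For (v), I would write $\mu\ast\nu=\int T_x\mu\,d\nu(x)$ where $T_x$ denotes translation by $x$; concavity (iii) gives
\[
H(\mu\ast\nu,\mathcal{D}_n)\geq\int H(T_x\mu,\mathcal{D}_n)\,d\nu(x),
\]
and (iv) applied to $\mathcal{D}_n$ and its translate $\mathcal{D}_n-x$ yields $|H(T_x\mu,\mathcal{D}_n)-H(\mu,\mathcal{D}_n)|=O(1)$ uniformly in $x$, giving the required lower bound.

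For (vi), I would employ the standard decomposition: when $d_{TV}(\nu,\tilde\nu)<\delta$, one can write $\nu=(1-\delta)\rho+\delta\sigma$ and $\tilde\nu=(1-\delta)\rho+\delta\tilde\sigma$ for suitable probability measures $\rho,\sigma,\tilde\sigma$. Combining concavity of $H(\cdot,\mathcal{E})$ with the two-term mixture bound
\[
H\bigl((1-\delta)\rho+\delta\sigma,\mathcal{E}\bigr)\leq(1-\delta)H(\rho,\mathcal{E})+\delta H(\sigma,\mathcal{E})+H(\delta),
\]
and using (i) to bound $H(\sigma,\mathcal{E}),H(\tilde\sigma,\mathcal{E})\leq\log k$, one obtains $|H(\nu,\mathcal{E})-H(\tilde\nu,\mathcal{E})|\leq\delta\log k+H(\delta)$; taking $\delta=\varepsilon$ (for $\varepsilon\leq 1/2$) gives the stated estimate. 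For the $H_m$ specialisation, one notes that $\mathcal{D}_m$ has at most $O(R\cdot 2^m)$ cells meeting $[-R,R]$, so $\log k=m+O(\log R)$, and dividing through by $m$ absorbs $\log R$ into the additive error term.
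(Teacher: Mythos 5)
Your proof is correct. The paper itself gives no proof of this proposition --- it is stated as a collection of standard entropy facts (essentially those collected in Hochman's paper) --- and your arguments (Jensen/chain rule for (i)--(ii), cell-by-cell concavity for (iii), the common-refinement trick for (iv), integral concavity plus translation-invariance up to $O(1)$ for (v), and the total-variation mixture decomposition $\nu=(1-\delta)\rho+\delta\sigma$, $\tilde\nu=(1-\delta)\rho+\delta\tilde\sigma$ with the two-term mixture bound for (vi)) are exactly the standard ones, with no gaps.
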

\begin{proof}
These properties are well-known; since we were not able to find a single source for all them, we sketch the proofs. Property i. is immediate from the concavity of the logarithm, while Property iii. follows at once from the concavity of $\phi(x)=-x\log(x)$.  For ii. using that $\phi(xy) = x\phi(y)+y\phi(x)$, we have
\begin{align*}
H(\nu,\mathcal{E}) &= \sum_{F\in\mathcal{F}} \sum_{E\in\mathcal{E},E\subset F} \phi(\nu(F)\nu_F(E)) \\
&= \left(\sum_{F\in\mathcal{F}} \nu(F) \sum_{E\in\mathcal{E}} \phi(\nu_F(E))\right) + \left( \sum_{F\in\mathcal{F}} \phi(\nu(F)) \sum_{E\in\mathcal{E}} \nu_F(E) \right) \\
&=  H(\nu,\mathcal{E}|\mathcal{F})+H(\nu,\mathcal{F}) .
\end{align*}
For iv., note that it follows from i. and ii. that $|H(\nu,\mathcal{E}\vee \mathcal{F})-H(\nu,\mathcal{E})|\le \log k$, where $\mathcal{E}\vee \mathcal{F}$ is the coarsest common refinement of $\mathcal{E}$ and $\mathcal{F}$, and likewise for $\mathcal{F}$. Property v. follows from the identity $\mu \ast \nu = \int (\mu \ast \delta_y) d\nu(y)$, concavity of entropy and Property iv., since $\mu \ast \delta_y$ is a translate of $\mu$. Finally, to show Property vi. we first observe the \textit{convexity bound}
\[ \label{eq:convexitybound}
H\left( \sum_{i=1}^k q_i \mu_i, \mathcal{E}\right)\leq \sum_{i=1}^k q_iH(\mu_i,\mathcal{E}) + H(q)
\] valid for all probability vectors $q=(q_1,\dots,q_k)$ and probability measures $\mu_1,\dots,\mu_k$. Indeed, since $\phi(\sum_i x_i)\leq \sum_i \phi(x_i)$ for $x_i \in [0,1]$, and again using $\phi(xy)=x\phi(y)+y\phi(x)$,
\begin{align*}
H\left(\sum_{i=1}^k q_i \mu_i,\mathcal{E}\right) &  \leq \sum_{E \in \mathcal{E}} \sum_{i=1}^k \phi\left(q_i \mu_i(E)\right)\\
& = \sum_{i=1}^k q_i \left(\sum_{E \in \mathcal{E}} \phi(\mu_i(E))\right) + \sum_{i=1}^k \phi(q_i)\left( \sum_{E \in \mathcal{E}} \mu_i(E)\right)\\
& =\sum_{i=1}^k q_i H(\mu_i,\mathcal{E}) + H(q).
\end{align*} Now, it follows from the results in \cite[Chapter 3.7.3 and 3.8.3]{Tho} that given $\varepsilon \in (0,\frac{1}{2})$ there exists $\delta> 0$ and $\alpha \in (0,\varepsilon)$ such that if $d_{TV}(\nu,\tilde{\nu})< \delta$ then there exist probability measures $\tau,\nu_*,\tilde{\nu}_*$ with $\nu=(1-\alpha)\tau + \alpha \nu_*$ and $\tilde{\nu}=(1-\alpha)\tau + \alpha \tilde{\nu}_*$, so that by the concavity of entropy and the convexity bound \eqref{eq:convexitybound} we obtain
\[
|H(\nu,\mathcal{E})-H(\tilde{\nu},\mathcal{E})| < \alpha \log k + H(\alpha) < \varepsilon \log k + H(\varepsilon),
\] where for the last inequality we have used that $\alpha <\varepsilon< \frac{1}{2}$ and that $H$ is increasing as a function of $\alpha$ on $(0,\frac{1}{2})$.
\end{proof}

Recall that for any Borel probability measure $\nu$ on $\R$, we say that $\nu$ has \textit{entropy dimension} $\theta$ whenever
\[
\lim_{n \rightarrow +\infty} H_n(\nu)= \theta,
\]
and we denote it by $\textup{dim}_e\, \nu = \theta$.  As is well known, if $\nu$ has exact dimension $\theta$ then $\textup{dim}_e\,\nu=\theta$. Hence, we have the following immediate corollary of Theorem \ref{thm:exact}.
\begin{corollary}\label{cor:entropydim} There is a full measure set $\Omega^*\subset\Omega$ such that $\textup{dim}_e(\eta^\pom)=\alpha$ for all $\om\in\Omega^*$, where $\alpha$ is given by \eqref{eq:value-exact-dim}.

In particular, $H_n(\eta^{(\cdot)}) \overset{\PP}{\longrightarrow} \alpha$, i.e. for any given $\varepsilon > 0$ we have
\[
\lim_{n \rightarrow +\infty} \PP\left( \left\{ \om \in \Omega : |H_n(\eta^\pom)-\alpha| > \varepsilon \right\} \right) = 0.
\]	
\end{corollary}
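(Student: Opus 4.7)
The plan is to read off this corollary as an essentially immediate consequence of Theorem~\ref{thm:exact}, appealing only to the general principle---which the authors invoke as well-known in the sentence just above---that exact-dimensionality of a (compactly supported) Borel probability measure on $\R$ forces the entropy dimension to exist and to equal the exact dimension. Concretely, I would take $\Omega^{*}$ to be the full-$\PP$-measure set supplied by Theorem~\ref{thm:exact} on which $\eta^\pom$ is exact-dimensional with value $\alpha$ given by \eqref{eq:value-exact-dim}. Applying the cited principle pointwise on $\Omega^{*}$ gives $\textup{dim}_e\, \eta^\pom = \alpha$, i.e.\ $H_n(\eta^\pom) \to \alpha$ for every $\om\in\Omega^{*}$, which is the first claim.

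The ``in particular'' assertion then follows from the elementary fact that almost-sure convergence implies convergence in probability. The only point requiring a brief check is measurability of the random variable $\om\mapsto H_n(\eta^\pom)$, which is routine: $\om\mapsto\eta^\pom$ is measurable as a $\mathcal{P}([-R,R])$-valued map (by construction, $\eta^\pom$ is the push-forward under $\Pi_\om$ of a product measure depending measurably on $\om$), and $\nu\mapsto H(\nu,\mathcal{D}_n)$ is a measurable function of $\nu$. Convergence in probability then yields the quantitative statement $\PP(\{|H_n(\eta^{(\cdot)}) - \alpha|>\varepsilon\})\to 0$ displayed in the corollary.

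There is no serious obstacle to the argument, since the bridge ``exact dimension $\Rightarrow$ entropy dimension'' is entirely standard. If a self-contained verification were desired, I would write $H_n(\eta^\pom) = \tfrac{1}{n}\int -\log \eta^\pom(D_n(x))\,d\eta^\pom(x)$, and combine the inclusion $D_n(x)\subset B(x,2^{-n})$ with Fatou's lemma applied to the $\eta^\pom$-a.e.\ pointwise limit $-\log \eta^\pom(B(x,2^{-n}))/n\to\alpha$ to obtain $\liminf_n H_n(\eta^\pom) \ge \alpha$; the reverse inequality follows from a comparison with the half-shifted dyadic partition $\mathcal{D}_n + 2^{-n-1}$, whose entropy differs from $H(\eta^\pom,\mathcal{D}_n)$ by $O(1)$ (Proposition~\ref{propentro}(iv)), together with bounded convergence, which is available because $H_n(\eta^\pom)\le 1 + O_R(1/n)$ on measures supported in $[-R,R]$.
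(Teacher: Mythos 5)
Your proposal is correct and follows exactly the paper's route: the paper likewise presents this corollary as an immediate consequence of Theorem \ref{thm:exact} together with the well-known fact that an exact-dimensional measure has entropy dimension equal to its exact dimension, and then the ``in particular'' clause is just almost-sure convergence implying convergence in probability. The optional self-contained sketch you append is the standard argument for that fact (note only that the upper bound requires uniform integrability of the integrands $-\log\eta^\pom(D_n(x))/n$, not merely boundedness of $H_n(\eta^\pom)$), but since the paper simply cites the fact as well known, nothing more is needed.
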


\subsection{A theorem on entropy growth}
\label{subsec:entropy-growth}
Theorem \ref{thm:sup-exp-conc} is derived from a result about the entropy of finite approximations of $\eta^\pom$. Namely, if we define $\nu^{(\om,n)}$ as the projection of $\oeta^\pom$ via the $n$-truncated coding map $\Pi^{(n)}_\om$ given by
\[
\Pi^{(n)}_\om := \sum_{k=1}^n \left(\prod_{j=1}^{k-1}\lambda_{\om_j}\right)t^{(\om_k)}_{u_k},
\]
then we will show that Theorem \ref{thm:sup-exp-conc} is a consequence of Theorem \ref{teo2} below. For simplicity of notation, we assume that $R$ is chosen so large that $\tfrac12 < 2R\lam_{\min}$. Given $n \in \N$ we shall write $\ell^\pom_n$ for the unique natural number such that
\begin{equation}\label{ell}
2R \prod_{i=1}^{\ell^\pom_n} \lambda_{\om_i} \leq 2^{-n} < 2R \prod_{i=1}^{\ell^\pom_n -1} \lambda_{\om_i}.
\end{equation}
We will often write $n'$ or $n'(\om)$ for $\ell^{\pom}_n$. Recall that we have fixed a model $\Sigma$ with a Bernoulli selection measure $\PP$.

\begin{theorem}\label{teo2} We have the following implication:
\[
	\dim(\Sigma) < 1 \Longrightarrow \frac{H(\nu^{(\cdot,n'(\cdot))},\mathcal{D}_{(q+1)n}|\mathcal{D}_n)}{n} \overset{\PP}{\longrightarrow}0 \text{ for all $q \in \N$,}
\]
where, for each $n \in \N$ and $\omega \in \Omega$, we abbreviate $n'(\om):=\ell^\pom_n$.
\end{theorem}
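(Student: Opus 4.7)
The strategy is to combine the convolution structure arising from the dynamic self-similarity relation~\eqref{eq:dssr} with the entropy-dimension convergence $H_m(\eta^{(\cdot)})\overset{\PP}{\longrightarrow}\alpha$ from Corollary~\ref{cor:entropydim} (with $\alpha:=\dim(\Sigma)$), and to close the residual gap by invoking an inverse theorem for entropies of convolutions. Applying~\eqref{eq:dssr} at level $n'=n'(\om)$ gives the identity
\[
\eta^\pom \;=\; \nu^{(\om,n')}\ast \mu^{(\om,n')},\qquad \mu^{(\om,n')}:=\lam_{n'}^\pom\cdot\eta^{(T^{n'}\om)},\ \ \lam_{n'}^\pom=\prod_{i=1}^{n'}\lam_{\om_i},
\]
and the definition of $n'$ in~\eqref{ell} guarantees that $\mu^{(\om,n')}$ is supported in $O_R(1)$ cells of $\mathcal{D}_n$.

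Using this decomposition together with Proposition~\ref{propentro}, I first extract coarse two-sided comparisons between the dyadic entropies of $\nu^{(\om,n')}$ and of $\eta^\pom$. Convolution by a measure supported in $O_R(1)$ cells of $\mathcal{D}_n$ perturbs the scale-$n$ entropy by at most $O_R(1)$, so Corollary~\ref{cor:entropydim} gives $H(\nu^{(\om,n')},\mathcal{D}_n)=H(\eta^\pom,\mathcal{D}_n)+O_R(1)=\alpha n+o_\PP(n)$. At the finer scale, Proposition~\ref{propentro}(v) yields the one-sided bound $H(\nu^{(\om,n')},\mathcal{D}_{(q+1)n})\le H(\eta^\pom,\mathcal{D}_{(q+1)n})+O_R(1)=\alpha(q+1)n+o_\PP(n)$. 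Subtracting produces the \emph{preliminary} estimate
\[
\frac{H\bigl(\nu^{(\om,n')},\mathcal{D}_{(q+1)n}\mid\mathcal{D}_n\bigr)}{n}\;\le\;\alpha q+o_\PP(1),
\]
which is only $O(1)$ and thus falls short of the required $o_\PP(1)$.

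The residual gap of size $\alpha q$ must be closed by exploiting the hypothesis $\alpha<1$. My plan is to apply an inverse theorem for entropies of convolutions in the spirit of \cite[Theorem~2.7]{Hochman14}: if $\nu\ast\mu$ has scale-$m$ entropy saturating the sub-additivity bound $H(\nu,\mathcal{D}_m)+H(\mu,\mathcal{D}_m)$ up to slack $o(m)$ while the ambient capacity $m$ is not itself saturated (which is ensured precisely by $\alpha<1$), then at a density-one set of scales one of the two factors must be concentrated on few dyadic cells. Applying this to $\nu=\nu^{(\om,n')}$ and $\mu=\mu^{(\om,n')}$ across intermediate scales $k\in[n,(q+1)n]$, Corollary~\ref{cor:entropydim} applied to the shifted measure $\eta^{(T^{n'}\om)}$ together with the ergodicity of $(T,\PP)$ rules out the concentration case for $\mu^{(\om,n')}$ at a density-one set of such scales. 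Consequently the concentration must live in $\nu^{(\om,n')}$, forcing its conditional refinement between scales $n$ and $(q+1)n$ to be $o_\PP(n)$, which is the conclusion.

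The main obstacle is the last step. The elementary convolution inequalities cannot bridge the $O(n)$ gap on their own, so one must adapt the deterministic inverse theorem of Hochman (or a variant of it) to our random measures $\{\nu^{(\om,n')},\mu^{(\om,n')}\}_\om$, tracking measurability and uniformity across $\om$, and then combine its ``density-one scales'' output with an ergodic-averaging argument for the shift $T$ in order to produce convergence in $\PP$-probability at the specific scale pair $(n,(q+1)n)$. This is where the hypothesis $\dim(\Sigma)<1$ is genuinely used, and it is likely the technical heart of Section~\ref{sec:dimension}.
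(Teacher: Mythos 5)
Your overall architecture matches the paper's: decompose $\eta^\pom=\nu^{(\om,n')}\ast\tau^{(\om,n')}$ with $\tau^{(\om,n')}=\lambda_{\om_1}\cdots\lambda_{\om_{n'}}\cdot\eta^{(T^{n'}\om)}$, compare entropies at scales $n$ and $(q+1)n$ via the entropy-dimension convergence, and close the remaining $O(n)$ gap with Hochman's inverse theorem using $\dim(\Sigma)<1$. You also correctly flag the inverse-theorem step as the technical heart. But the specific way you propose to execute that step has a genuine gap.

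You claim that Corollary \ref{cor:entropydim} applied to $\eta^{(T^{n'}\om)}$, together with ergodicity, rules out the ``uniform'' branch of the inverse theorem for the factor $\tau^{(\om,n')}$. It does not. The inverse theorem (Theorem \ref{thm:inverse}) requires that a $(1-\eps)$-fraction of the \emph{components} $\tau^{x,i}$ satisfy $H_m(\tau^{x,i})<1-\eps$. Corollary \ref{cor:entropydim} only controls the \emph{global} entropy $H_m(\eta^{(T^{n'}\om)})\to\alpha$, which by Lemma \ref{lemaentro} bounds the \emph{average} of the component entropies; Markov's inequality then bounds the fraction of components with entropy $\ge 1-\eps$ only by roughly $\alpha/(1-\eps)$, which is far from the required $\eps$ unless $\alpha$ is tiny. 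What is genuinely needed is \emph{uniform} entropy dimension --- most components at most scales have entropy close to $\alpha$ --- and this is Theorem \ref{thm:ued}/Corollary \ref{cor:uedp}, whose proof (the Frostman-type estimate of Lemma \ref{lema:ued1} for the random measures, the truncated components of Proposition \ref{prop:ued3} and Corollary \ref{cor:ued4}) is the bulk of Section \ref{sec:dimension} and has no substitute in your plan. In the deterministic case this step is cheap because a self-similar measure is either a single atom or continuous; in the random setting the absence of a modulus of continuity uniform in $\om$ is exactly the obstruction.

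A second, related problem: the inverse theorem cannot be applied to the global pair $(\nu^{(\om,n')},\tau^{(\om,n')})$ at scale $(q+1)n$, because the convolution's entropy exceeds $H(\tau^{(\om,n')},\D_{(q+1)n})$ by about $\alpha n$, so the non-growth hypothesis fails; moreover your ``saturating sub-additivity'' framing points the inequality the wrong way (sub-additivity yields only a trivial lower bound on the conditional entropy, which is the quantity you are trying to kill). The paper first localizes to the level-$n$ raw components $\nu^{(\om,n')}_{x,i}$ (Proposition \ref{prop:intermediate-teo2}): by concavity of conditional entropy, for most components the convolution with $\tau^{(\om,n')}$ produces no entropy gain at scale $(q+1)n$, and only then is Theorem \ref{thm:inverse} applied componentwise (followed by a Borel--Cantelli argument along a summable sequence $\eps_j$ and a subsequence trick to upgrade to convergence in $\PP$-probability). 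Your preliminary two-sided comparison is correct but, as you acknowledge, insufficient; the missing ingredients are precisely this componentwise localization and the uniform entropy dimension theorem.
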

This result extends \cite[Theorem 1.3]{Hochman14} to our setting. We will see how Theorem \ref{thm:sup-exp-conc} is derived from Theorem \ref{teo2} in Section \ref{subsec:endproof} below (this is analogous to the deduction of \cite[Theorem 1.1]{Hochman14}  from \cite[Theorem 1.3]{Hochman14}).

\subsection{Component measures and the inverse theorem} \label{subsec:component-measures}

Given $x \in \R$ let $D_n(x)$ denote the unique $n$-level dyadic interval containing $x$. For $D \in \mathcal{D}_n$ let $T_D : \R \to \R$ denote the unique homothety mapping $D$ to the interval $[0,1)$. Recall that if $\nu$ is a probability measure on $\R$ then $T_D\, \nu$ is the push-forward of $\nu$ through $T_D$.

\begin{definition} For a probability measure $\nu$ on $\R$ and a dyadic cell $D$ with $\nu(D) > 0$ we define:
	\begin{enumerate}
		\item [$\bullet$] The \textit{raw $D$-component} of $\nu$ as $\nu_D := \frac{1}{\nu(D)} \nu|_D$.
		\item [$\bullet$] The \textit{rescaled $D$-component} of $\nu$ as $\nu^D:= \frac{1}{\nu(D)}T_D\,\nu|_D$.
	\end{enumerate}
	Also, for $x \in \R$ such that $\nu(D_n(x))> 0$ we write $\nu_{x,n}:= \nu_{D_n(x)}$ and $\nu^{x,n}:= \nu^{D_n(x)}$. We call these the $n$-level components of $\nu$.
\end{definition}
Finally, for any bounded measurable function $f: \mathcal{P}(\R) \to \R$ and finite $J \subseteq \N$ we define
\begin{equation} \label{cintegration}
\mathbf{E}_J(f(\nu_{x,i})):= \frac{1}{|J|} \sum_{j \in J} \int_{\R} f(\nu_{y,j})d\nu(y),
\end{equation} i.e. the expectation of the random variable $f(\nu_{x,i})$ when $i$ is chosen at random uniformly in $J$ and $x$ is chosen at random according to $\nu$. Likewise, we write $\mathbf{P}_J(\nu_{x,i}\in A)$ for $\mathbf{E}_J(\mathbf{1}_A(\nu_{x,i}))$.

Notice that for any Borel set $A \subseteq \R$ one has the identity
\begin{equation}
\label{eq:descomp}
\nu(A)=\mathbf{E}_{n}(\nu_{x,i}(A)).
\end{equation}
We also define the analogue of \eqref{cintegration} for the rescaled component measures $\nu^{x,i}$ in the obvious manner. From this definition one obtains the identity
\begin{equation}
\label{eq:identity}
\mathbf{E}_n ( H_m( \nu^{x,i})) = \frac{1}{m} H( \nu, \D_{n+m}|\D_n)
\end{equation}valid for all $n,m \in \N$. See \cite[Section 3.2]{Hochman14}. Furthermore, one also has the following result, proved in \cite[Lemma 3.4]{Hochman14} .
\begin{lemma}\label{lemaentro}
Given $R > 1$, for every $\nu \in \mathcal{P}([-R,R])$ and $m < n \in \N$
\[
H_n(\nu) = \mathbf{E}_{\{1,\dots,n\}} \left( H_m\left(\nu^{x,i}\right) \right) + O\left(\frac{m}{n} + \frac{\log R}{n} \right)
\]
\end{lemma}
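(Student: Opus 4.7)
The plan is to reduce the identity to bookkeeping via the chain rule for Shannon entropy and the trivial bound on $H(\nu,\mathcal{D}_i)$ arising from the compact support. Write $a_i := H(\nu,\mathcal{D}_i)$, so that $H_i(\nu) = a_i/i$. By the identity \eqref{eq:identity}, for each fixed level $i$ we have
\[
\mathbf{E}_i(H_m(\nu^{x,i})) = \tfrac{1}{m} H(\nu,\mathcal{D}_{i+m}|\mathcal{D}_i) = \tfrac{1}{m}(a_{i+m}-a_i),
\]
the last equality being the chain rule (Proposition~\ref{propentro}(ii)). Averaging over $i\in\{1,\dots,n\}$ and re-indexing the two resulting telescoping-style sums,
\[
\mathbf{E}_{\{1,\dots,n\}}(H_m(\nu^{x,i})) \;=\; \frac{1}{nm}\sum_{i=1}^{n}(a_{i+m}-a_i) \;=\; \frac{1}{nm}\left(\sum_{i=n+1}^{n+m} a_i \;-\; \sum_{i=1}^{m} a_i\right).
\]

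Next I would use two elementary bounds on the sequence $a_i$. First, since $\mathcal{D}_{i+1}$ refines $\mathcal{D}_i$ with branching at most $2$, Proposition~\ref{propentro}(ii) gives $0\le a_{i+1}-a_i\le 1$; in particular the sequence is non-decreasing with bounded increments. Second, since $\nu$ is supported on $[-R,R]$ it charges at most $2R\cdot 2^i + O(1)$ dyadic intervals of level $i$, so by Proposition~\ref{propentro}(i) one has $a_i\le i+O(\log R)$ uniformly. From the first bound, $m a_n \le \sum_{i=n+1}^{n+m} a_i \le m a_n + m(m+1)/2$; from the second, $0 \le \sum_{i=1}^{m} a_i \le m^2/2 + O(m\log R)$. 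Substituting these into the previous display yields
\[
\mathbf{E}_{\{1,\dots,n\}}(H_m(\nu^{x,i})) \;=\; \frac{a_n}{n} + O\!\left(\frac{m}{n} + \frac{\log R}{n}\right) \;=\; H_n(\nu) + O\!\left(\frac{m}{n} + \frac{\log R}{n}\right),
\]
which is exactly the claim.

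There is no real obstacle here: the lemma is genuinely an accounting identity, and the only conceptual content is the two-line input (chain rule plus the $O(\log R)$ support bound). The single point that requires a bit of care is justifying the re-indexing of the double sum $\sum_{i=1}^{n}(a_{i+m}-a_i)$ into the difference of edge sums $\sum_{i=n+1}^{n+m}a_i - \sum_{i=1}^m a_i$; this is immediate but should be written out so that the two error contributions $m/n$ (from levels $n+1,\dots,n+m$ above the target scale) and $\log R/n$ (from the initial levels $1,\dots,m$) are clearly separated.
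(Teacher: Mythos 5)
Your proof is correct. The paper itself gives no argument for this lemma (it is quoted from Hochman's work, \cite[Lemma 3.4]{Hochman14}), and your derivation --- the chain rule applied to $H(\nu,\mathcal{D}_{i+m}\vert\mathcal{D}_i)=a_{i+m}-a_i$, telescoping the average into the two edge sums $\sum_{i=n+1}^{n+m}a_i-\sum_{i=1}^{m}a_i$, and bounding these via $0\le a_{i+1}-a_i\le 1$ and $a_i\le i+O(\log R)$ --- is exactly the standard proof of that cited lemma.
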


We can now state the inverse theorem for the entropy of convolutions which constitutes the core of our approach. It is originally featured in \cite[Theorem 2.8]{Hochman14} (see also the remark after \cite[Theorem 2.9]{Hochman14}).
\begin{theorem}\label{thm:inverse} Given $\varepsilon,R > 0$ and $m \in \N$, there exists $\delta = \delta(\varepsilon,R,m) > 0$ such that, for every $n \geq n(\varepsilon,\delta,R,m)$ and every $\tau \in \mathcal{P}([-R, R])$, if
	\[
	\mathbf{P}_{\{1,\dots,n\}}( H_m(\tau^{x,i}) < 1 - \varepsilon) > 1 - \varepsilon,
	\]
	then for every $\nu \in \mathcal{P}([-R,R])$ one has
	\[
	H_n(\nu) > \varepsilon \Longrightarrow H_n(\nu \ast \tau ) \geq H_n (\tau) + \delta.
	\]
\end{theorem}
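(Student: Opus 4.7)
The plan is to reduce the convolution statement to a scale-by-scale analysis using the component-measure machinery already set up (Lemma \ref{lemaentro} and identity \eqref{eq:identity}). Applying Lemma \ref{lemaentro} to both $\nu * \tau$ and $\tau$, I get
\[
H_n(\nu * \tau) - H_n(\tau) = \mathbf{E}_{\{1,\ldots,n\}}\bigl[H_m((\nu * \tau)^{x,i}) - H_m(\tau^{x,i})\bigr] + O\!\left(\tfrac{m+\log R}{n}\right).
\]
So the task becomes producing a uniform pointwise (or averaged) entropy gain at scale $m$ after convolution, on a large enough proportion of the dyadic components.

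The first key step is a \emph{local convolution lemma}: for a dyadic cell $D = D_i(x)$ of length $2^{-i}$ with $i \le n$, the rescaled component $(\nu * \tau)^{x,i}$ is within small total variation of a convex combination of rescaled convolutions $\nu^{y,i} * \tau^{z,i}$, with weights proportional to $\nu(D_i(y))\tau(D_i(z))$ for $y, z$ such that $D_i(y) + D_i(z)$ meets $D$. This follows by writing $\nu * \tau = \iint \delta_{y+z}\,d\nu(y)\,d\tau(z)$ and localizing to the dyadic partition. Combining this with concavity of $H_m(\cdot,\mathcal{D}_m)$ (Proposition \ref{propentro}(iii)) and the almost-invariance estimate in Proposition \ref{propentro}(iv), I obtain
\[
H_m\bigl((\nu * \tau)^{x,i}\bigr) \;\ge\; \mathbf{E}\bigl[H_m(\nu^{y,i} * \tau^{z,i})\bigr] - O(1/m),
\]
where the expectation is with respect to the weights just described.

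The second, and central, step is a \emph{pointwise gain lemma}: there exists $\delta' = \delta'(\varepsilon,m)>0$ such that whenever $\mu, \sigma \in \mathcal{P}([0,1])$ satisfy $H_m(\mu) \ge \varepsilon/2$ and $H_m(\sigma) \le 1-\varepsilon$, one has $H_m(\mu * \sigma) \ge H_m(\sigma) + \delta'$. One obtains $\delta'$ by a compactness/continuity argument on $\mathcal{P}([0,2])$: the functional $(\mu,\sigma) \mapsto H_m(\mu*\sigma) - H_m(\sigma)$ is continuous in total variation (Proposition \ref{propentro}(vi)), nonnegative by Proposition \ref{propentro}(v), and strictly positive on the closed region cut out by the two entropy inequalities, since equality would force $\mu * \sigma$ to be $\mathcal{D}_m$-equidistributed on each dyadic block of $\sigma$, forcing $\sigma$ itself to be nearly uniform and contradicting $H_m(\sigma) < 1-\varepsilon$. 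The final step is to combine: the hypothesis $H_n(\nu) > \varepsilon$ together with identity \eqref{eq:identity} guarantees that $H_m(\nu^{y,i}) \ge \varepsilon/2$ on a $\gtrsim \varepsilon$-fraction of pairs $(y,i)$; the hypothesis on $\tau$ guarantees $H_m(\tau^{z,i}) < 1 - \varepsilon$ on a $(1-\varepsilon)$-fraction of $(z,i)$; and on the intersection the pointwise lemma applies, yielding net gain $\delta$ depending only on $\varepsilon$ and $m$ after choosing $n$ large to absorb the error terms.

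The main obstacle is the pointwise gain lemma. The soft compactness argument sketched above gives existence of $\delta'$ but no effective control, and more delicately, it hides the real issue: in $\mathbb{R}$, one measure can be concentrated on a short arithmetic progression at scale $2^{-m}$ while the other is adapted to the \emph{same} progression, causing only a marginal entropy increase upon convolution. Ruling out such resonances quantitatively is exactly the content of Hochman's full inverse theorem for entropy of convolutions on $\mathbb{R}$; its proof, which I would cite rather than reproduce, proceeds through a careful scale-by-scale dichotomy in the spirit of Bourgain's sum-product / discretized-ring theorems and is the deep input of the whole approach.
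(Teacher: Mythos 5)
The first thing to note is that the paper does not prove Theorem \ref{thm:inverse} at all: it is imported verbatim as an external input, namely \cite[Theorem 2.8]{Hochman14}, and the intended ``proof'' is that citation. Your proposal ultimately also defers to Hochman, but only after a reduction that is both circular and quantitatively broken. It is circular because the statement you are asked to prove \emph{is} Hochman's inverse theorem for entropy of convolutions; ending by saying you would ``cite Hochman's full inverse theorem'' for your key lemma means the honest proof is the one-line citation, and the intermediate reduction adds nothing.

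More importantly, the reduction itself has a genuine gap. You convert the multi-scale statement into a single-scale ``pointwise gain lemma'': $H_m(\mu)\ge\eps/2$ and $H_m(\sigma)\le 1-\eps$ should force $H_m(\mu*\sigma)\ge H_m(\sigma)+\delta'(\eps,m)$. First, the compactness argument you sketch does not establish even the existence of $\delta'$: $\mathcal{P}([0,2])$ is not compact in total variation, and $H_m$ together with convolution is not continuous in the weak topology, so there is no compact set on which to take a positive infimum. Second, and decisively, even granting the lemma, the best possible $\delta'(\eps,m)$ is exponentially small in $m$: take $\mu$ uniform on an arithmetic progression of length $2^{\eps m/2}$ with gap $2^{-m}$ and $\sigma$ uniform on an arithmetic progression of length $2^{(1-\eps)m}$ with the same gap; then $H_m(\mu)=\eps/2$, $H_m(\sigma)=1-\eps$, and a direct computation with the trapezoidal profile of the convolution gives $H_m(\mu*\sigma)-H_m(\sigma)=O\bigl(m^{-1}2^{-(1-3\eps/2)m}\bigr)$. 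This gain is far smaller than the $O(1/m)$ error terms that your localization step (the ``local convolution lemma'' plus Proposition \ref{propentro}(iv)) necessarily introduces, so the averaged gain is swamped by the errors and the argument cannot close for any choice of $n$. This is exactly why Hochman's result is genuinely multi-scale (its proof runs through entropy growth under repeated self-convolution and a Berry--Esseen argument, not a single-scale gain), and why the correct treatment here is simply to cite \cite[Theorem 2.8]{Hochman14}, as the paper does.
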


\subsection{Uniform entropy dimension of $\eta^\pom$} \label{subsec:uniform-entropy-dim}

Our next objective is to show the following result, which we shall use to prove Theorem \ref{teo2}.
\begin{theorem}\label{thm:ued}  There exists a full $\PP$-measure set $\Omega' \subseteq \Omega$ such that $\eta^\pom$ has \textit{uniform entropy dimension} $\alpha=\dim(\Sigma)$ for any $\omega \in \Omega'$, i.e. given any $\varepsilon > 0$, for every $m \in \N$ sufficiently large (depending only on $\varepsilon$ and $\om$)
	\[
	\liminf_{n \rightarrow +\infty} \mathbf{P}_{\{1,\dots,n\}} \left( \left| H_m\left(\eta^{\pom,x,i}\right) - \alpha\right| < \varepsilon \right) > 1 - \varepsilon,
	\] where we write $\eta^{\pom,x,i}:= \left(\eta^\pom\right)^{x,i}$.
\end{theorem}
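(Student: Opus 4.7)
The plan is to exploit the dynamical self-similarity \eqref{eq:dssr} to express each rescaled component $\eta^{\pom,x,i}$ as essentially an affine image of some $\eta^{(T^k\om)}$, with $k=k(\om,x,i)$ chosen so that the contraction $\lam^\pom_{u(x)|k}$ matches the dyadic scale $2^{-i}$, and then transfer the $\PP$-in-probability convergence $H_m(\eta^{(\cdot)})\to\alpha$ of Corollary \ref{cor:entropydim} to component-wise concentration via Birkhoff's ergodic theorem on $(\Omega,\PP,T)$.

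First I would set up the ergodic input. For each rational $\eps>0$ and $m\in\N$, define $\phi_{m,\eps}(\om'):=\mathbf{1}\{|H_m(\eta^{(\om')})-\alpha|\ge \eps/4\}$; Corollary \ref{cor:entropydim} gives $\EE(\phi_{m,\eps})\to 0$ as $m\to\infty$. Let $\chi:=-\int\log\lam_{\om'_1}\,d\PP(\om')>0$. Birkhoff's theorem produces a $\PP$-full-measure set $\Omega'\subseteq\Omega^*$ (obtained as a countable intersection over rational $\eps$ and $m\in\N$) on which
\[
\frac{1}{N}\sum_{k=0}^{N-1}\phi_{m,\eps}(T^k\om)\;\longrightarrow\;\EE(\phi_{m,\eps}) \quad\text{and}\quad \frac{1}{N}\sum_{k=0}^{N-1}(-\log\lam_{\om_{k+1}})\;\longrightarrow\;\chi.
\]

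Next, for $\om\in\Omega'$, $\eta^\pom$-a.e.\ $x$, and each $i\in\N$, let $u=u(x)\in\X^\pom_\infty$ be the (a.s.\ unique) coding of $x$ and let $k=k(\om,x,i)$ be the largest integer for which $B^\pom_{u|k}$ contains $D_i(x)$, so that $\lam^\pom_{u|k}\asymp 2^{-i}$ with constants depending only on $R$ and $\lam_{\min}$. By \eqref{eq:dssr},
\[
\eta^\pom\big|_{D_i(x)}=\sum_{v\in\X^\pom_k:\,B^\pom_v\cap D_i(x)\neq\emptyset}p^\om_v\,f^\pom_v\eta^{(T^k\om)}\big|_{D_i(x)},
\]
and provided $x$ lies neither too close to the endpoints of $B^\pom_{u|k}$ nor near any competing cylinder boundary --- an event of small $\eta^\pom$-probability that will have to be controlled using exact dimensionality from Theorem \ref{thm:exact} --- only the $v=u|k$ term contributes, so after rescaling by $T_{D_i(x)}$ the component $\eta^{\pom,x,i}$ coincides up to small total variation with an affine image of $\eta^{(T^k\om)}$ of bounded distortion. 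Proposition \ref{propentro}(iv) and (vi) then yield $|H_m(\eta^{\pom,x,i})-H_m(\eta^{(T^k\om)})|\le O_R(1/m)+r(\om,x,i,m)$, with $r$ vanishing off the boundary set.

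Finally, given $\eps>0$ and $\om\in\Omega'$, I would pick $m$ large enough that $O_R(1/m)<\eps/4$ and $\EE(\phi_{m,\eps})<\eps/4$. On the intersection of the ``away-from-boundary'' event and the event $\phi_{m,\eps}(T^{k(\om,x,i)}\om)=0$, one obtains $|H_m(\eta^{\pom,x,i})-\alpha|<\eps$. That this intersection has $\mathbf{P}_{\{1,\ldots,n\}}$-probability exceeding $1-\eps$ for all large $n$ follows from the two Birkhoff convergences: the Lyapunov average transfers uniformity in $i\in\{1,\ldots,n\}$ into uniformity in $k\in\{1,\ldots,\lfloor n/\chi\rfloor\}$ up to a vanishing error, which lets the ergodic average of $\phi_{m,\eps}$ along the $k$-indexing control the proportion of ``bad'' $i$'s. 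The main obstacle will be the boundary estimate: since no separation condition is assumed on the IFS, one must show that the $\eta^\pom$-mass within an $O(2^{-i})$-neighborhood of the cylinder boundaries at the matching level $k(\om,x,i)$ is small on a set of $\om$ of full $\PP$-probability, uniformly in $i$, and this is where exact dimensionality of $\eta^\pom$ from Theorem \ref{thm:exact} combined with a Borel--Cantelli / maximal-function argument over the random tree of cylinders becomes essential.
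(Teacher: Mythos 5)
There is a genuine gap at the central step of your argument. You claim that, provided $x$ is not too close to cylinder boundaries, ``only the $v=u|k$ term contributes'', so that $\eta^{\pom,x,i}$ is, up to small total variation, a \emph{single} affine image of $\eta^{(T^k\om)}$. This fails because no separation condition is assumed on the IFSs: in the overlapping case the dyadic interval $D_i(x)$ is typically covered by \emph{many} level-$k$ cylinders $B^\pom_v$, and all of the corresponding terms $p^\om_v f^\pom_v\eta^{(T^k\om)}$ contribute to $\eta^\pom|_{D_i(x)}$, no matter how far $x$ is from any individual cylinder's boundary. What one actually gets (and what the paper uses, via the truncated components $\eta^\pom_{n,[x,i]}$) is that the component is TV-close to a \emph{convex combination} of translates of $\lam_{\om_{k+1}}\cdots\lam_{\om_n}\cdot\eta^{(T^n\om)}$. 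Concavity of entropy then gives only the \emph{lower} bound $H_m(\eta^{\pom,x,i})>\alpha-\eps$ with high probability; a convex combination of low-entropy measures can have much larger entropy, so your route yields no upper bound. The paper closes the gap with a separate averaging argument: by Lemma \ref{lemaentro}, $\mathbf{E}_{\{1,\dots,n\}}(H_m(\eta^{\pom,x,i}))\to\alpha$, and combined with the one-sided concentration from below and nonnegativity of entropy this forces two-sided concentration. Your proposal is missing this step entirely, and without it the theorem does not follow.

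A secondary but still substantive issue is the boundary estimate. You propose to control the $\eta^\pom$-mass near dyadic endpoints using exact dimensionality from Theorem \ref{thm:exact}. That is a pointwise a.e.\ statement about balls centered at $\eta^\pom$-typical points and does not give the uniform-in-center, uniform-in-scale control that is needed; moreover, what must be controlled is the mass that the \emph{shifted} measures $\eta^{(T^{k'}\om)}$ assign to fixed small neighborhoods of the (rescaled) dyadic endpoints. The paper instead proves a uniform Frostman property, $\eta^{(T^k\om)}(B(y,r))\le C r^\rho$ for all $y$ and small $r$, for a positive density of shifts $k$ (Lemma \ref{lema:ued1}), exploiting non-degeneracy to produce two separated cylinders at some bounded level and iterating \eqref{eq:dssr} along the return times to the symbol $i_0$. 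Some such uniform modulus of continuity, rather than exact dimensionality, is what the argument actually requires.
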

Again, this generalizes \cite[Proposition 5.2]{Hochman14} to our random setting. As discussed in \cite[Section 5.1]{Hochman14}, the notion of uniform entropy dimension (UED) is stronger than the one given in Section \ref{sec:result}.

We begin by recalling that degenerate systems generate measures supported on a single atom, and so Theorem \ref{thm:ued} is trivial in this case.  Therefore, for the rest of this section we make the following assumptions.

\begin{assump}\label{assump:1} There exists $i_0 \in I$ such that:
\begin{enumerate}
	\item [(i).] $k_{i_0}\ge 2$ and $t_j^{(i_0)}\neq t_{j'}^{(i_0)}$ for some $j,j'$, i.e. the model is non-degenerate.
	\item [(ii).] $\PP(\{ \om \in \Omega : \om_1 = i_0\}) > 0$.
\end{enumerate}
\end{assump}
Nonetheless, to avoid any possible confusion, in the sequel we will indicate it explicitly whenever these assumptions are needed.

We begin the proof of Theorem \ref{thm:ued} with a few preliminary lemmas. In the setting of \cite[Proposition 5.2]{Hochman14}, a key fact is that a self-similar measure is either continuous or supported on a single atom. We need an analog of this fact, but the situation is more involved since, whenever $k_i=1$ for some $i$, there will be some $\om$ for which $\eta^\pom$ is indeed atomic. Moreover, even if we know that $\eta^\pom$ is continuous $\PP$-almost surely, in general there will be no modulus of continuity valid $\PP$-almost everywhere. The next lemma will help us cope with this lack of uniformity.

We say that a measure $\gamma$ is \emph{$(C,\rho)$-Frostman} if $\gamma(B(x,r)) \leq C r^\rho$ for all $x\in \R$ and $r>0$. The following result asserts that, for some fixed $\rho>0$, the measures $\eta^{(T^k\om)}$ are often $(C,\rho)$-Frostman.

\begin{lemma}\label{lema:ued1} If $\PP$ verifies \mbox{Assumptions \ref{assump:1}}, then there exist $\rho>0$ and a full $\PP$-measure set $\Omega^{(1)} \subseteq \Omega$, such that for any $\delta > 0$ there is a constant $C_\delta > 0$ satisfying
	\[
	\liminf_{n \rightarrow +\infty} \frac{1}{n}\left|\{ k \in \{1,\ldots,n\} : \eta^{(T^k \om)} \text{ is $(C_\delta,\rho)$-Frostman} \}\right|\geq 1-\delta
	\]
	for every $\om \in \Omega^{(1)}$.
\end{lemma}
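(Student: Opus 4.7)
The plan is to reduce the density statement to a pointwise Frostman bound on $\eta^\pom$ with a random (but almost surely finite) constant, and then to construct such a bound using Assumption \ref{assump:1} and the infinite-convolution structure of $\eta^\pom$.

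\textbf{Reduction to a random Frostman bound.} It suffices to produce $\rho > 0$ and a measurable map $C : \Omega \to [1,\infty]$, finite $\PP$-almost surely, such that $\eta^\pom$ is $(C(\om),\rho)$-Frostman for $\PP$-a.e.\ $\om$. Given such $\rho$ and $C$, set $A_M := \{\om : C(\om) \le M\}$; since $\PP(A_M) \uparrow 1$, for each $\delta > 0$ we may pick $C_\delta$ with $\PP(A_{C_\delta}) \ge 1 - \delta$. Birkhoff's pointwise ergodic theorem applied to $\mathbf{1}_{A_{C_\delta}}$ on the ergodic system $(\Omega, T, \PP)$ then yields, for $\PP$-a.e.\ $\om$,
\[
\lim_{n\to\infty}\frac{1}{n}\bigl|\{k \in \{1,\ldots,n\}: T^k\om \in A_{C_\delta}\}\bigr| = \PP(A_{C_\delta}) \ge 1 - \delta,
\]
which is exactly the desired density bound, because $T^k\om \in A_{C_\delta}$ means $\eta^{(T^k\om)}$ is $(C_\delta, \rho)$-Frostman. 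Intersecting over a countable dense sequence of $\delta$'s produces a single full-measure $\Omega^{(1)}$ valid for all $\delta > 0$.

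\textbf{Setting up the Frostman bound via the $i_0$-steps.} Under Assumption \ref{assump:1}, set $q_0 := \PP(\{\om_1 = i_0\}) > 0$, $c_0 := \min\{|t_j^{(i_0)} - t_{j'}^{(i_0)}| : t_j^{(i_0)} \ne t_{j'}^{(i_0)}\} > 0$, and $p^* := \min_j p_j^{(i_0)} > 0$. Since $\eta^\pom$ is the distribution of $\sum_{n\ge 1}\prod_{j<n}\lam_{\om_j}\,t^{(\om_n)}_{U_n}$ for independent $U_n \sim p_{\om_n}$, it factors conditional on $\om$ as the infinite convolution $\eta^\pom = \Conv_{n\ge 1}\mu_n^\pom$, where $\mu_n^\pom$ is discrete and, for each $n$ in $S(\om) := \{m : \om_m = i_0\}$, is supported on at least two distinct atoms with minimum spacing $\ge c_0\prod_{j<n}\lam_{\om_j}$ and all atom masses lying in $[p^*, 1-p^*]$. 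Defining $\tau^\pom := \Conv_{n \in S(\om)}\mu_n^\pom$ and using the elementary convolution inequality $(\mu * \nu)(B(x,r)) \le \sup_y \mu(B(y,r))$, we obtain $\eta^\pom(B(x,r)) \le \sup_y \tau^\pom(B(y,r))$, reducing the task to a Frostman bound on $\tau^\pom$.

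\textbf{Frostman of $\tau^\pom$ and main obstacle.} By the ergodic theorem, the enumeration $n_k^\pom$ of $S(\om)$ satisfies $n_k^\pom/k \to 1/q_0$ $\PP$-a.s., and $-n^{-1}\log\prod_{j\le n}\lam_{\om_j} \to \chi := -\int\log\lam_{\om'_1}\,d\PP > 0$; hence the scales $s_k^\pom := c_0\prod_{j<n_k^\pom}\lam_{\om_j}$ of the factors in $\tau^\pom$ decay geometrically at a rate bounded uniformly away from $1$. The target Frostman exponent is $\rho = q_0\log(1/(1-p^*))/\chi$, produced by a cascade estimate: at scale $r \in (s_{K+1}^\pom, s_K^\pom]$, group the atoms of $\tau^\pom$ by coarse prefix $(v_1,\ldots,v_K) \in \prod_{k\le K}\mathrm{supp}(\mu_{n_k^\pom}^\pom)$, and observe that each prefix carries mass at most $(1-p^*)^K \sim r^\rho$. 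The main obstacle is controlling the number of coarse prefixes whose partial sum $\sum_{k\le K}s_k^\pom v_k$ can fall into a single ball of radius $r$, since such overlaps are unavoidable in general. Overcoming this requires an inductive pigeonhole on scales that leverages the uniform exponential gap between successive $s_k^\pom$ (guaranteed $\PP$-a.s.\ by the ergodic theorem) to bound the prefix count sub-exponentially in $K$; the random speed of convergence in Birkhoff then enters the resulting constant $C(\om)$, which is nevertheless finite on a set of full $\PP$-measure, closing the scheme.
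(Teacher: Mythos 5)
Your opening reduction is exactly the paper's: reduce to a pointwise Frostman bound $\eta^\pom(B(x,r))\le C(\om)r^\rho$ with $C(\om)<\infty$ a.s., then convert to the density statement via Birkhoff applied to $\mathbf{1}_{\{C\le C_\delta\}}$. The factorization of $\eta^\pom$ as an infinite convolution and the restriction to the $i_0$-positions is also a sound start. The problem is the last step.

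The gap is in your ``main obstacle'' paragraph. You reduce to bounding $\sup_y \tau^\pom(B(y,r))$ by grouping atoms by coarse prefix $(v_1,\ldots,v_K)$ and then propose to bound \emph{the number of prefixes whose partial sums fall in a single ball} sub-exponentially in $K$ by an ``inductive pigeonhole on scales.'' This cannot work as stated: the model imposes no separation condition whatsoever (the paper explicitly allows arbitrary, even exact, overlaps), so exponentially many prefixes can produce the \emph{same} partial sum, and no pigeonhole argument controls their number. A counting/transversality bound of the kind you invoke is simply not available here, and the sketch gives no actual mechanism for it. The correct argument -- which is what the paper does -- avoids counting entirely. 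At each selected $i_0$-occurrence one first passes to a subsequence $(n_j)$ of occurrences with gaps at least a fixed $\ell$, chosen so large that $2R\lam_{\max}^{\ell}$ is smaller than the separation $|t_j^{(i_0)}-t_{j'}^{(i_0)}|$; this guarantees that at scale $j$ there are two cylinders $B_u^{(\om^j)},B_v^{(\om^j)}$ that are $2\eps$-separated, with the diameter of everything below that level being negligible. Then any ball of radius $r\le\eps$ (at the appropriate rescaling) must \emph{miss} one of the two cylinders, so by the dynamical self-similarity \eqref{eq:dssr} its mass drops by a factor $1-p_{\min}^{n_{j+1}-n_j}$ relative to the sup at the next level. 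Iterating this multiplicative loss, and using the ergodic theorem to ensure most gaps $n_{j+1}-n_j$ are $O(1)$, yields the power bound directly. Your scheme could be repaired along the same lines (a ball of radius small compared to $s_k$ minus the tail diameter meets the translate of the tail by at most one atom location of $\mu_{n_k}^\pom$, hence loses a factor $1-p^*$), but that is a different argument from the prefix count you propose, and it also requires the gap condition $n_{j+1}-n_j\ge\ell$ that your write-up omits: geometric decay of the $s_k$ alone does not make the tail small relative to the atom spacing.
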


\begin{proof} By the ergodic theorem, our task is to show the existence of $\rho>0$ such that, given $\delta>0$, there is $C_\delta>0$ satisfying
\[
\PP\left(\{\om: \eta^\pom \text{ is $(C_\delta,\rho)$-Frostman}\}\right) > 1-\delta.
\]
In turn, for this it is enough to show that there is $\rho>0$ such that for $\PP$-almost all $\om$ there is $r_0=r_0(\om)>0$ satisfying
\begin{equation} \label{eq:Frostman-small-scales}
\eta^\pom(B(x,r)) \le  C_0 r^{\rho} \text{ for all } x\in\R ,r\in (0,r_0),
\end{equation}
where $C_0>0$ depends only on the model.

First, notice that there exist $\ell\in\N$, $\eps>0$, such that whenever $\om_1=i_0$ (where $i_0$ is from Assumptions \ref{assump:1}), then there are $u,v \in \X^\pom_\ell$ such that $B^\pom_u$ and $B^\pom_v$ are $(2\eps)$-separated; recall \eqref{eq:def-symbolic-interval}. Indeed, pick $u,v$ starting with $j,j'$ respectively, where $t_j^{i_0}\neq t_{j'}^{i_0}$, and otherwise having equal entries.  Then
\begin{align*}
\dist(B^\pom_u, B^\pom_v) &\ge | f_u^\pom(0)-f_v^\pom(0)| -  2 R \lambda_{\om_1}\cdots \lambda_{\om_\ell} \\
&\ge |t_j^{i_0}-t_{j'}^{i_0}| - 2 R \lam_{\max}^\ell.
\end{align*}
Thus we can ensure that $\dist(B^\pom_u, B^\pom_v)\ge 2\eps$ by taking $\ell$ large enough.

Now, given $\om \in\Omega$ containing infinitely many $i_0$'s (which is the case $\PP$-almost surely) let us define a subsequence $(n_j)_{j \in \N_0}=(n_j(\om))_{j \in \N_0}$ as follows: set $n_0 := 0$ and then, having defined $n_j$ for $j \in \N_0$, set
\[
n_{j+1} = \min\{ n\ge n_j+\ell: \omega_n=i_0\}.
\]
Since $i_0$ appears in $(\om_1,\ldots,\om_{n_j})$ at most $j\ell$ times, it follows from the ergodic theorem that for $\PP$-a.e. $\om$,
\[
\limsup_{j\to\infty} \frac{n_j}{j} \le \frac{\ell}{\PP(\{\om:\om_1=i_0\})} =: \frac{C'}{2} <\infty.
\]
This implies that for $\PP$-a.e. $\om$ there exists $J(\om)\in\N$ such that for any $j\ge J(\om)$,
\begin{equation} \label{eq:not-many-big-jumps}
|\{ k\in\{1,\ldots, j-1\}: n_{k+1}-n_k > 2 C'\}| \le \frac{j}{2}.
\end{equation}
Let \[
	\phi^\pom(r):= \sup_{x \in \R} \eta^\pom(B(x,r)).
	\]
Fix $j\ge 1$. For simplicity, let us write $\omega^j := T^{n_j-1} \om$, and note that $\om^j_1=i_0$. Then, if $u,v \in \X^{(\omega^j)}_{n_{j+1}-n_j}$ are such that $\dist(B^{(\omega^j)}_u,B^{(\omega^j)}_v) > 2\eps$ (such $u,v$ exist since $n_{j+1}-n_j \geq \ell$ and $\omega^j_1=i_0$) and we take $r \in (0,\eps]$, for every $x \in \R$ we have either $B(x,r)\cap B^{(\omega^j)}_u = \emptyset$ or $B(x,r) \cap B^{(\omega^j)}_v = \emptyset$. Assuming without loss of generality that we are in the second case, we can use the self-similarity relation \eqref{eq:dssr} to estimate
	\begin{align}
	\eta^{(\omega^j)}(B(x,r)) &= \sum_{e \in \X^{(\omega^j)}_{n_{j+1}-n_j}} p^{(\omega^j)}_e \cdot f^{(\omega^j)}_e \eta^{(\omega^{j+1})} (B(x,r)) \nonumber \\
	&= \sum_{e \neq v} p^{(\omega^j)}_e \cdot \eta^{(\omega^{j+1})} \left( \left(f^{(\omega^j)}_e\right)^{-1}(B(x,r))\right) \nonumber \\
	& \leq \sum_{e \neq v} p^{(\omega^j)}_e \cdot \phi^{(\omega^{j+1})}\left(\frac{r}{\lambda_{\min}^{n_{j+1}-n_j}}\right) \nonumber \\
	&  \leq (1 - p_{\min}^{n_{j+1}-n_j}) \phi^{(\omega^{j+1})}\left(\frac{r}{\lambda_{\min}^{n_{j+1}-n_j}}\right)	\label{eq:sep5}
	\end{align}
	where $p_{\min} := \min \{ p^{(i)}_j : j=1,\dots,k_i, i\in I \} > 0$ and $\lambda_{\min}=\min\{ \lambda_i,i\in I \}>0$. A similar but easier argument yields
\[
\eta^\pom(B(x,r)) \le \phi^{(\omega^1)}\left(\frac{r}{\lam_{\min}^{n_1}}\right).
\]
Starting with this bound,  iterating \eqref{eq:sep5}, and recalling \eqref{eq:not-many-big-jumps}, we conclude that if $j\ge J(\om)$ then
    \[
    \phi^{(\omega)}(\eps \cdot \lambda_{\min}^{n_j}) \le (1-p_{\min}^{2C'})^{j/2}.
 \]
 Finally, pick $J'(\om)\ge J(\om)$ such that $\frac{n_{j+1}}{j} \leq C'$ for all $j \geq J'(\om)$. If $0<r \le r_0:= \eps \cdot \lambda_{\min}^{n_{J'(\om)}}$, we can find $j'\ge J(\om)$ such that $\eps \cdot \lambda_{\min}^{n_{j+1}} < r\le \eps \cdot \lambda_{\min}^{n_j}$, so that
 \[
 \eta^\pom(B(x,r)) \le \phi^\pom( \eps \cdot \lambda_{\min}^{n_j}) \le (1-p_{\min}^{2C'})^{j/2} \le \eps^{-\rho} r^\rho,
 \]
 where $\rho=\log(1-p_{\min}^{2C'})/(2 C' \log\lam_{\min})$. We have verified that \eqref{eq:Frostman-small-scales} holds, as desired.
\end{proof}

One has the analogous result to Lemma \ref{lema:ued1} for the relative frequencies of the sets where $H_m(\eta^{(T^k \om)}) > \alpha - \delta $ (where we recall that $\alpha=\dim(\Sigma)$).

\begin{lemma}\label{lema:ued2} There exists a full $\PP$-measure set $\Omega^{(2)} \subseteq \Omega$ such that for any $\delta > 0$ there is $m_\delta \in \N$ satisfying
	\[
	\liminf_{n \rightarrow +\infty} \frac{1}{n}\left| k\in \{1,\dots,n\}:H_m(\eta^{(T^k \om)}) > \alpha - \delta  \right| \geq 1-\delta
	\] for all $m \geq m_\delta$ and every $\om \in \Omega^{(2)}$.
\end{lemma}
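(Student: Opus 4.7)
The key input is Corollary \ref{cor:entropydim}, which gives a full $\PP$-measure set $\Omega^\ast\subset\Omega$ on which $\lim_{m\to\infty} H_m(\eta^\pom) = \alpha$. The plan is to upgrade this pointwise asymptotic control into a statement that holds uniformly in $m$ on a set of large measure, and then apply Birkhoff's ergodic theorem along the shift $T$.

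First, for each $\delta>0$ and $m\in\N$, I would consider the set
\[
B_{m,\delta} := \{\om\in\Omega : H_{m'}(\eta^\pom) > \alpha - \delta \text{ for all } m'\ge m\}.
\]
The sequence $(B_{m,\delta})_{m\in\N}$ is nondecreasing and its union contains $\Omega^\ast$, so $\PP(B_{m,\delta})\uparrow 1$. Thus for each $\delta>0$ there exists $m_\delta\in\N$ with $\PP(B_{m_\delta,\delta})>1-\delta$. The crucial feature of $B_{m_\delta,\delta}$ is its stability: once $T^k\om\in B_{m_\delta,\delta}$, then $H_m(\eta^{(T^k\om)}) > \alpha-\delta$ for \emph{every} $m\ge m_\delta$ simultaneously, which is precisely what is needed in the conclusion of the lemma.

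Next, since $\PP$ is $T$-invariant and ergodic, Birkhoff's theorem applied to the indicator $\mathbf{1}_{B_{m_\delta,\delta}}$ yields, for $\PP$-a.e.\ $\om$,
\[
\lim_{n\to\infty}\frac{1}{n}\bigl|\{k\in\{1,\ldots,n\} : T^k\om\in B_{m_\delta,\delta}\}\bigr| = \PP(B_{m_\delta,\delta}) > 1-\delta.
\]
Combined with the stability property above, this gives the desired lower density bound on $\{k:H_m(\eta^{(T^k\om)})>\alpha-\delta\}$ for every $m\ge m_\delta$.

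The only mildly delicate point—and the one to handle carefully—is producing a \emph{single} full-measure set $\Omega^{(2)}$ that works for all $\delta>0$ at once, since the null set discarded by Birkhoff a priori depends on $\delta$. I would remedy this by restricting to a countable dense sequence $\delta_q = 1/q$: apply the construction above to each $\delta_q$ to obtain $m_q := m_{\delta_q}$ and a full-measure set $\Omega_q^\ast$, then set $\Omega^{(2)} := \bigcap_{q\in\N}\Omega_q^\ast$, which still has full measure. Given an arbitrary $\delta>0$, one chooses $q$ with $\delta_q<\delta$ and defines $m_\delta := m_q$; the required bound for $\delta$ then follows from the one already proved for $\delta_q$ on $\Omega^{(2)}$, using the monotonicity $\{H_m > \alpha-\delta_q\}\subset\{H_m>\alpha-\delta\}$. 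No subtler ingredient is needed beyond Corollary \ref{cor:entropydim} and the ergodic theorem.
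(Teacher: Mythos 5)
Your proof is correct and follows essentially the same route as the paper: both derive the lemma from Corollary \ref{cor:entropydim} together with Birkhoff's ergodic theorem applied to indicator functions of high-entropy events, intersecting over a countable family of $\delta$'s to get a single full-measure set. The only (harmless) difference is bookkeeping: you apply Birkhoff once per $\delta$ to the ``stable'' set $B_{m_\delta,\delta}=\{\om: H_{m'}(\eta^\pom)>\alpha-\delta \ \forall m'\ge m_\delta\}$ to get uniformity over all $m\ge m_\delta$ in one stroke, whereas the paper applies it to $\{\om: H_m(\eta^\pom)>\alpha-\delta\}$ for each fixed $m\ge m_\delta$ separately, using that $\inf_{m\ge m_\delta}\PP(H_m>\alpha-\delta)>1-\delta$.
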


\begin{proof} By Corollary \ref{cor:entropydim}, given $\delta>0$ there is $m_\delta \in \N$ such that
	\[
	 \inf_{m \geq m_\delta} \PP\left( \left\{ \om \in \Omega : H_m(\eta^\pom) > \alpha - \delta\right\}\right)  > 1-\delta.
	\]
The lemma now follows from the ergodic theorem applied to the indicator function of the event $\{\om:H_m(\eta^\pom) > \alpha - \delta\}$.
\end{proof}

Next, let us fix $\om \in \Omega$ and for $n > k$ and $D\in\D_k$ define the $n$-truncated $k$-components of $\eta^\pom$ as
\[
\eta^{\pom}_{n,[D]} = \frac{1}{Z_{D,n}} \sum_{u \in \X^\pom_{n,[D]}} p^\pom_u \cdot f^\pom_u \eta^{(T^n \om)},
\] where
\[
\X^\pom_{n,[D]}= \{ u \in \X^\pom_n : B^\pom_u \subseteq D\},
\]
and $Z_{D,n}$ is a normalizing constant making $\eta^\pom_{n,[D]}$ a probability measure. We also write
\[
\eta^{\pom}_{n,[x,k]} = \eta^\pom_{n,D_k(x)}
\]
for simplicity. Put into words, the measure $\eta^{\pom}_{n,[x,k]}$ differs from $\eta^\pom_{x,k}$ in that, instead of restricting
\[
\eta^\pom = \sum_{u \in \X^{(\om)}_{n}} p_u^\pom \cdot f^\pom_u \eta^{(T^n \om)}
\]
to $D_k(x)$, we exclude all terms whose support is not entirely contained in $D_k(x)$.

Our next goal is to show that, for a suitably chosen $k$, and for $N$ large but fixed independent of $k$, the measures $\eta^\pom_{k+N,[x,i]}$ and $\eta^\pom_{x,i}$ are  close in the total variation distance (in particular, they have close entropies) with large $\mathbf{P}_k$-probability. See Proposition \ref{prop:ued3} for the precise statement. This will allow us to eventually replace $\eta^\pom_{x,k}$ by $\eta^\pom_{k+N,[x,k]}$, the point being that the latter is a convex combination of measures of the form $f^\pom_u \eta^{(T^n \om)}$, and hence we can estimate its entropy from below. This step is carried over in Corollary \ref{cor:ued4}.

Recall the coefficient $\ell^\pom_k$ defined in Section \ref{sec:result}, i.e. the integer $\ell_k$ satisfying
\[
2R \lambda_{\om_1}\dots\lambda_{\om_{\ell_k}} \approx 2^{-k}.
\]
In the course of this section we often denote this number by $k'(\om)$ or $k'$ if the dependence on $\om$ is clear from context.

\begin{proposition} \label{prop:ued3}
	For any $\varepsilon > 0$ and choice of constants $C,\rho > 0$, there exists $N_{\varepsilon}=N(\varepsilon,C,\rho) \in \N$ such that the following holds: if $\om \in \Omega$, $k \in \N$ are such that $\eta^{(T^{k'}\om)}$ is $(C,\rho)$-Frostman, and we take $n = k'(\om) + N_\varepsilon$, then
\[
\mathbf{P}_{k} \left( d_{TV}\left(\eta^\pom_{x,i}, \eta^\pom_{n,[x,i]}\right) < \varepsilon \right) > 1 - \varepsilon.
\]
\end{proposition}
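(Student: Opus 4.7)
The plan is to bound the total variation distance by the total $p_u^\pom$-weight carried by cylinders at level $n$ which straddle the boundary $\partial D_i(x)$, and then use the $(C,\rho)$-Frostman property of $\eta^{(T^{k'}\om)}$ to show that this weight is uniformly small on average over $(x,i)$.

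First, I apply the self-similarity relation \eqref{eq:dssr} at level $n$ to write
$$
\eta^\pom|_{D_i(x)} = \sum_{u:\,B_u^\pom \subset D_i(x)} p_u^\pom f_u^\pom \eta^{(T^n\om)} + \sum_{u \in \mathcal{S}(x,i)} p_u^\pom (f_u^\pom \eta^{(T^n\om)})|_{D_i(x)},
$$
where $\mathcal{S}(x,i) := \{u \in \X_n^\pom : B_u^\pom \cap D_i(x) \neq \emptyset,\;B_u^\pom \not\subset D_i(x)\}$. The first sum equals $Z_{D_i(x),n}\,\eta^\pom_{n,[x,i]}$, while every $B_u^\pom$ with $u \in \mathcal{S}(x,i)$ has diameter at most $\delta := 2R\prod_{j=1}^n \lam_{\om_j} \le 2^{-k}\lam_{\max}^{N_\eps}$ and meets $\partial D_i(x)$, hence is contained in $N_\delta(\partial D_i(x))$. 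Setting $\beta(\om,x,i) := \sum_{u \in \mathcal{S}(x,i)} p_u^\pom$, a standard TV inequality between $\eta^\pom|_{D_i(x)}$ and its unnormalized ``inner part'' yields, after normalization,
$$
d_{TV}\bigl(\eta^\pom_{x,i},\,\eta^\pom_{n,[x,i]}\bigr) \le \frac{2\,\beta(\om,x,i)}{\eta^\pom(D_i(x))}, \qquad \beta(\om,x,i) \le \eta^\pom\bigl(N_\delta(\partial D_i(x))\bigr).
$$

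Next, I estimate the $\mathbf{P}_k$-average of this ratio. By Fubini and the observation that each $x$ lies in $N_\delta(\partial D)$ for at most four $D \in \D_i$ (since $\delta < 2^{-i-1}$ once $N_\eps$ is large),
$$
\mathbf{E}_k\!\left[\frac{\beta(\om,x,i)}{\eta^\pom(D_i(x))}\right] \le \frac{4}{k}\sum_{i=1}^k \eta^\pom\bigl(N_\delta(2^{-i}\Z)\bigr).
$$
To bound each summand, I apply \eqref{eq:dssr} again at level $k'=k'(\om)$: $\eta^\pom = \sum_{v \in \X_{k'}^\pom} p_v^\pom f_v^\pom \eta^{(T^{k'}\om)}$. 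For each $v$, the set $(f_v^\pom)^{-1}(N_\delta(2^{-i}\Z))$ is the $(\delta/\lam_v)$-neighborhood of an arithmetic progression of step $2^{-i}/\lam_v$, where $\lam_v := \prod_{j=1}^{k'}\lam_{\om_j}$. From \eqref{ell} and $i \le k$ one gets $\lam_v \le 2^{-k}/(2R)$, so the spacing is at least $2^{k-i+1}R \ge 2R$, while the radius is at most $4R\lam_{\min}^{-1}\,2^{-N_\eps}$. Hence only $O(1)$ points of this progression meet $[-R,R]$, and the $(C,\rho)$-Frostman hypothesis on $\eta^{(T^{k'}\om)}$ yields
$$
(f_v^\pom\eta^{(T^{k'}\om)})\bigl(N_\delta(2^{-i}\Z)\bigr) \le C''\, 2^{-N_\eps \rho}
$$
for a constant $C''=C''(R,\rho,C,\lam_{\min})$. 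Summing over $v$ using $\sum_v p_v^\pom = 1$ transfers this bound to $\eta^\pom(N_\delta(2^{-i}\Z))$, uniformly in $i \le k$.

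Combining everything yields $\mathbf{E}_k[\beta(\om,x,i)/\eta^\pom(D_i(x))] \le 4C''\,2^{-N_\eps\rho}$. Taking $N_\eps$ so large that this is at most $\eps^2/2$ and applying Markov's inequality completes the proof. The main technical point is the grid-counting step: the matching $2R\lam_v \approx 2^{-k}$ built into \eqref{ell}, together with the constraint $i \le k$, forces the preimage progression to have spacing $\ge 2R$ and therefore to intersect $[-R,R]$ in only $O(1)$ points. Without this matching of scales, one would have to sum $\sim 2^i$ Frostman contributions, producing a bound that blows up with $i$ and renders the Frostman input insufficient.
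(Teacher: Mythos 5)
Your proof is correct and follows essentially the same route as the paper's: both arguments reduce the total variation distance to the $p^\pom_u$-mass of level-$n$ cylinders straddling the dyadic grid, observe that these cylinders live in a $\bigl(2^{-k}\lam_{\max}^{N_\eps}\bigr)$-neighborhood of the grid, pull back under the level-$k'$ maps (whose contraction is comparable to $2^{-k}$, so that only $O(1)$ grid points are relevant per cylinder), and invoke the Frostman property of $\eta^{(T^{k'}\om)}$ at scale roughly $2R\lam_{\max}^{N_\eps}$ before finishing with Markov's inequality. Two cosmetic points: the pulled-back radius is $2R\prod_{j=k'+1}^{n}\lam_{\om_j}\le 2R\lam_{\max}^{N_\eps}$ (your $4R\lam_{\min}^{-1}2^{-N_\eps}$ implicitly assumes $\lam_{\max}\le 1/2$, but replacing $2^{-N_\eps\rho}$ by $\lam_{\max}^{N_\eps\rho}$ changes nothing), and in the paper's notation $\mathbf{P}_k$ denotes the single level $i=k$ rather than an average over $i\in\{1,\ldots,k\}$ --- harmless here, since your grid-counting bound is uniform in $i\le k$.
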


\begin{proof} For simplicity, we suppress the dependence of $N_\varepsilon$ on $C$ and $\rho$ from the notation, and fix $\om, k$ as in the statement for the rest of the proof. Given $\varepsilon > 0$, let us choose $\delta \in (0,\tfrac{1}{2})$ such that any interval of length smaller than $2R\delta$ has $\eta^{(T^{k'} \om)}$-measure less than $\frac{\varepsilon^2}{8}$ (which is possible by the Frostman assumption), and pick $N_\varepsilon \in \N$ such that $\lambda_{\max}^{N_\varepsilon} < \frac{\delta}{2}$.
	
For $u \in \X^\pom_{{k'}}$ consider those $v \in \X^{(T^{{k'}} \om)}_{N_\varepsilon}$ such that $B_{uv}^\pom$ is not entirely contained in some $D\in\D_k$. Then the measure $f^\pom_{uv} \eta^{(T^{n} \om)}$ must be supported on an interval $J$ of length $2R\delta \lambda_{\om_1}\dots \lambda_{\om_{k'}} \leq \delta 2^{-k}$ centered at an endpoint of an element of $\D_k$. Since $f^\pom_u \eta^{(T^{k'} \om)}$ is supported on an interval of length less than $2^{-k}$, it follows that $f^\pom_u \eta^{(T^{k'} \om)}$ can give positive mass to at most two of such intervals $J$. Moreover, by choice of $\delta$ we have that
$f^\pom_u \eta^{(T^{k'} \om)}(J) < \frac{\varepsilon^2}{8}$ for any of them. Using the self-similarity relation \eqref{eq:dssr} applied to $\eta^{(T^{k'} \om)}$, we conclude that for each $u \in \X^\pom_{{k'}}$
\[
\sum_{v : uv \notin \X^\pom_{n}[\D_k]} p^{(T^{k'} \om)}_v < \frac{\varepsilon^2}{4},
\] where
\[
\X^\pom_{n}[\D_k]:= \bigcup_{D \in \D_k} \X^\pom_{n,[D]} = \{ z\in \X^\pom_n: B_z^\pom \subseteq D \text{ for some } D\in \D_k\}.
\]
Therefore, it follows that also
\[
\sum_{z \notin \X^\pom_{n}[\D_k]} p^\pom_z < \frac{\varepsilon^2}{4}.
\]
On the other hand, by Markov's inequality we obtain
\begin{align*}
\mathbf{P}_{k} \left( d_{TV}\left(\eta^\pom_{x,i}, \eta^\pom_{n,[x,i]}\right) \geq \varepsilon \right) &\leq \frac{1}{\eps} \, \mathbf{E}_{k}\left(d_{TV}\left(\eta^\pom_{x,i}, \eta^\pom_{n,[x,i]}\right)\right) \\
&\leq \frac{1}{\varepsilon}\sum_{D \in \mathcal{D}_k}  d_{TV}\left(\eta^\pom_D, \eta^\pom_{n,[D]}\right) \eta^\pom(D).
\end{align*}

A simple calculation using the definition of $d_{TV}$ shows that
\[
d_{TV}\left(\eta^\pom_D, \eta^\pom_{n,[D]}\right) \leq \frac{2}{\eta^\pom(D)}
\sum_{z \in \X^\pom_{n,(D)}} p^\pom_z,
\] where
\[
\X^\pom_{n,(D)}= \{ z \in \X^\pom_n : B^\pom_z \cap D \neq \emptyset, B^\pom_z \nsubseteq D \},
\]
which yields
\[
\mathbf{P}_{k} \left( d_{TV}\left(\eta^\pom_{x,i}, \eta^\pom_{n,[x,i]}\right) \geq \varepsilon \right) \leq \frac{2}{\varepsilon} \sum_{D \in \D_k} \sum_{z \in \X^\pom_{n,(D)}} p^\pom_z \leq \frac{4}{\varepsilon}\sum_{z \notin \X^\pom_{n}[\D_k]} p^\pom_z < \varepsilon
\] and thus concludes the proof.
\end{proof}

Proposition \ref{prop:ued3} has the following important corollary.

\begin{corollary}\label{cor:ued4} For each $\varepsilon > 0$ and choice of $C,\rho > 0$ there exist positive integers $m'_\varepsilon$ and $N'_\varepsilon$ (depending on $\varepsilon,C$ and $\rho$) such that the following holds: if $m \geq m'_\varepsilon$ then, for any $k \in \N$ such that $\eta^{(T^{k'(\om)}\om)}$ is $(C,\rho)$-Frostman and such that $n:= k'(\om) + N'_\varepsilon$ satisfies $H_m(\eta^{(T^n\om)})>\alpha-\tfrac{\eps}{4}$, one has
\[
	\mathbf{P}_k \left( H_{m} \left( \eta^{\pom,x,i}\right) > \alpha - \varepsilon \right) > 1 - \varepsilon.
\]
\end{corollary}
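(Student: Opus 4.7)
The strategy is to combine Proposition \ref{prop:ued3} with the stability of entropy under total variation perturbations (Proposition \ref{propentro}(vi)) and concavity of entropy (Proposition \ref{propentro}(iii)). Roughly, with high $\mathbf{P}_k$-probability the rescaled component $\eta^{(\om),x,i}$ is close in total variation to (the rescaled version of) $\eta^\pom_{n,[x,i]}$, which, being a convex combination of affine copies of $\eta^{(T^n\om)}$ with controlled contraction ratios, inherits (approximately) the large entropy of $\eta^{(T^n\om)}$.

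Concretely, given $\eps>0$ I would first use Proposition \ref{propentro}(vi) to pick $\delta_0=\delta_0(\eps)>0$ small enough that $d_{TV}(\nu,\tilde\nu)<\delta_0$ implies $|H_m(\nu)-H_m(\tilde\nu)| < \eps/4 + H(\eps/4)/m$ for any two Borel probability measures supported in $[0,1)$. Then I set $N'_\eps := N(\min(\delta_0,\eps),C,\rho)$ as provided by Proposition \ref{prop:ued3}. Under the hypotheses of the corollary, with $n:=k'(\om)+N'_\eps$, Proposition \ref{prop:ued3} gives $\mathbf{P}_k(A)>1-\eps$ for
\[
A := \{ x : d_{TV}(\eta^\pom_{x,k}, \eta^\pom_{n,[x,k]}) < \delta_0\}.
\]
Since $d_{TV}$ is invariant under the affine rescaling $T_{D_k(x)}$, on $A$ the rescaled measures $\eta^{(\om),x,i}$ and $\tilde\eta := T_{D_k(x)}\,\eta^\pom_{n,[x,k]}$ are still within $\delta_0$ in total variation, so by the choice of $\delta_0$ we have $|H_m(\eta^{(\om),x,i})-H_m(\tilde\eta)| < \eps/4 + H(\eps/4)/m$.

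To lower bound $H_m(\tilde\eta)$, I would write, using the definition of the truncated component,
\[
\tilde\eta = \frac{1}{Z}\sum_{u\in\X^\pom_{n,[D_k(x)]}} p^\pom_u \cdot (T_{D_k(x)}\circ f^\pom_u)\,\eta^{(T^n\om)},
\]
and observe that each map $T_{D_k(x)}\circ f^\pom_u$ is a similarity whose contraction ratio equals $2^k\lam_{\om_1}\cdots\lam_{\om_n}$, which by \eqref{ell} lies in an interval $[c_1(N'_\eps),c_2(N'_\eps)]$ independent of $\om$, $k$ and $u$. Proposition \ref{propentro}(iv) therefore yields a constant $C_1=C_1(N'_\eps)$ with $H(g\,\eta^{(T^n\om)},\D_m)\ge H(\eta^{(T^n\om)},\D_m)-C_1$ for every such $g$, and concavity of $H(\cdot,\D_m)$ then gives
\[
H_m(\tilde\eta) \ge H_m(\eta^{(T^n\om)}) - C_1/m > \alpha - \eps/4 - C_1/m,
\]
using the hypothesis $H_m(\eta^{(T^n\om)})>\alpha-\eps/4$. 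Choosing $m'_\eps$ so that $H(\eps/4)/m$ and $C_1/m$ are each smaller than $\eps/4$ for all $m\ge m'_\eps$ yields $H_m(\eta^{(\om),x,i})>\alpha-\eps$ on $A$, finishing the proof.

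The analytic ingredients are essentially all packaged in Propositions \ref{propentro} and \ref{prop:ued3}, so no real new difficulty arises; the only delicate point is bookkeeping, namely checking that the contraction ratios $2^k\lam_{\om_1}\cdots\lam_{\om_n}$ are confined to a range depending only on $N'_\eps$ (so that the constant $C_1$ can be chosen independently of $\om$ and $k$), and ordering the choices of $\delta_0$, $N'_\eps$ and $m'_\eps$ so that the two error terms combine to at most $\eps$.
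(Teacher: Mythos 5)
Your proposal is correct and follows essentially the same route as the paper's proof: use Proposition \ref{propentro}(vi) to convert the total-variation closeness supplied by Proposition \ref{prop:ued3} into entropy closeness, then exploit the fact that $\eta^\pom_{n,[x,k]}$ is a convex combination of affine copies of $\eta^{(T^n\om)}$ with contraction ratios confined (via \eqref{ell}) to a range depending only on $N'_\eps$, so that concavity and Proposition \ref{propentro}(iv) transfer the entropy lower bound at cost $O(1/m)$. The bookkeeping point you flag (uniformity of the rescaling constant in $\om$ and $k$) is exactly the one the paper handles, so no gap remains.
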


\begin{proof} Given $\varepsilon > 0$, choose $0 < \varepsilon' < \varepsilon$ sufficiently small so that $d_{TV}(\nu,\nu') < \varepsilon'$ implies that $|H_m(\nu) - H_m(\nu')| < \frac{\varepsilon}{2}$ for every $m \in \N$ and $\nu,\nu' \in \mathcal{P}([-R,R])$ (which exists by Proposition \ref{propentro}(\textup{vi})) and set $N'_\varepsilon:=N_{\varepsilon'}$ where $N_{\varepsilon'}$ is as in Proposition \ref{prop:ued3}.
	
Now, fix $k$ such that $\eta^{(T^{k'}\om)}$ is $(C,\rho)$-Frostman, and such that $n = k'(\om) + N'_{\varepsilon}$ satisfies
\[
H_m(\eta^{(T^n\om)})>\alpha-\frac{\eps}{4}.
\]
By Proposition \ref{prop:ued3} and choice of $\varepsilon'$, to conclude it will suffice to show that
\[
H_{m}( T_D\, \eta^\pom_{n,[x,k]}) > \alpha - \frac{\varepsilon}{2}
\]
for any $D \in \D_k$ and $m$ large enough. But since $\eta^\pom_{n,[x,k]}$ is a convex combination of $f^\pom_{u} \eta^{(T^{n}\om)}$ for $u \in \X^\pom_{n}$, by the concavity of entropy we have that
\begin{align*}
H_{m}( T_D\, \eta^\pom_{n,[x,k]}) &\geq \frac{1}{Z_{n,D}} \sum_{u \in \X^\pom_{n,[D]}} p^\pom_{u} \cdot H_{m} ( T_D\,f^\pom_{u} \eta^{(T^{n}\om)} ) \\
& \geq \left(\frac{1}{Z_{n,D}} \sum_{u \in \X^\pom_{n,[D]}} p^\pom_{u} \cdot H_{m} (\lambda_{\om_{{k'}+1}}\dots \lambda_{\om_{n}} \cdot \eta^{(T^{n}\om)})\right) - O\left(\frac{1}{m}\right),
\end{align*} where, for the second inequality, we have used Proposition \ref{propentro}(\textup{iv}) and the fact that $2^{-k} \approx 2R \lambda_{\om_1}\dots\lambda_{\om_{k'}}$. Thus, if we choose $m$ sufficiently large so as to guarantee that
\[
H_{m}(\kappa \cdot \mu) - O\left(\frac{1}{m}\right) >  \alpha - \frac{\varepsilon}{2}
\]
for all $ \lambda_{\min}^{N'_{\varepsilon}} < \kappa < 1$ and all probability measures $\mu$ supported on $[-R,R]$ with $H_m(\mu)>\alpha-\varepsilon/4$ (possible by Proposition \ref{propentro}(\textup{iv})), our choice of $k$ and $n$ yields the result.
\end{proof}

We now finish the proof of Theorem \ref{thm:ued}.

\begin{proof}[Proof of Theorem \ref{thm:ued}]
Consider the full $\PP$-measure set $\Omega' := \Omega^* \cap \Omega^{(1)} \cap \Omega^{(2)}$ given by Corollary \ref{cor:entropydim} and Lemmas \ref{lema:ued1}-\ref{lema:ued2}, take $\varepsilon \in (0,1)$ and fix an auxiliary $\varepsilon' \in (0,\frac{\varepsilon}{2})$. Let $\rho$ be the number given by Lemma \ref{lema:ued1} and,  given $\omega \in \Omega'$, choose $C_{\varepsilon'}$ as in Lemma \ref{lema:ued1} and $m''_{\varepsilon'}:= \max\{m_{\varepsilon'},m'_{\varepsilon'}\}$ where $m_{\varepsilon'} \in \N$ is as in Lemma \ref{lema:ued2} and $m'_{\varepsilon'}$ as in Corollary \ref{cor:ued4}. Then, for any $n \in \N$ and $m \geq m''_{\varepsilon'}$
\[
\mathbf{P}_{\{1,\dots,n\}} \left( H_m\left( \eta^{\pom,x,i}\right) > \alpha - \varepsilon' \right) \geq \frac{|\Theta^\pom \cap \{1,\dots,n\}|}{n} (1-\varepsilon'),
\] where
\[
\Theta^\pom := \left\{ k \in \N : \eta^{(T^{k'}\om)} \text{ is $(C,\rho)$-Frostman  and } H_m
\left(\eta^{(T^{k'+N'_{\eps'}}(\om))}\right)>\alpha-\tfrac{\eps'}{4}\right\}.
\]
Using Lemmas \ref{lema:ued1}-\ref{lema:ued2} and our choice of parameters, a straightforward calculation shows that
\[
\liminf_{n \rightarrow +\infty} \frac{|\Theta^\pom \cap \{1,\dots,n\}|}{n} \geq 1 - 2(\log \lambda_{\min}^{-1})\varepsilon'.
\]
We deduce that
\begin{equation} \label{eq:ued1}
\liminf_{n\rightarrow +\infty} \mathbf{P}_{\{1,\dots,n\}} \left( H_m\left( \eta^{\pom,x,i}\right) > \alpha - \varepsilon' \right) \geq  1-\eps'',
\end{equation}
where $\eps''=\eps''(\eps')$ goes to $0$ with $\eps'$.

On the other hand, since for $n$ sufficiently large (depending on $\om$) we have $|H_n(\eta^\pom) - \alpha| < \frac{\varepsilon'}{2}$ by Corollary \ref{cor:entropydim}, it follows from Lemma \ref{lemaentro} that
\[
\left|\mathbf{E}_{\{1,\dots,n\}}\left( H_m\left( \eta^{\pom,x,i}\right)\right) - \alpha\right| < \frac{\varepsilon'}{2}
\]
for all $n$ sufficiently large in terms of $\om$ and $m$. Since $H_m\left( \eta^{\pom,x,i}\right) \geq 0$, combining this with \eqref{eq:ued1} we reach the conclusion of the theorem.
\end{proof}

We record the following immediate corollary; this is the statement that will get used in the proof of Theorem \ref{teo2}.
\begin{corollary} \label{cor:uedp}For any $\varepsilon > 0$ we have
		\[
		\lim_{m \rightarrow +\infty} \PP \left( \left\{ \omega \in \Omega : \liminf_{n \rightarrow +\infty} \mathbf{P}_{\{1,\dots,n\}} \left( \left| H_m\left(\eta^{\pom,x,i}\right) - \alpha\right| < \varepsilon \right) > 1 - \varepsilon \right\}\right) = 1.
		\]
\end{corollary}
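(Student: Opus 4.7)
The plan is to observe that the statement is nothing more than a Fatou-type reformulation of Theorem \ref{thm:ued}. Fix $\varepsilon>0$ and, for each $m\in\N$, define the event
\[
A_{m,\varepsilon} := \left\{ \omega\in\Omega : \liminf_{n\to\infty} \mathbf{P}_{\{1,\dots,n\}}\!\left( \left|H_m(\eta^{\pom,x,i}) - \alpha\right| < \varepsilon\right) > 1-\varepsilon \right\}.
\]
One first checks that $A_{m,\varepsilon}$ is $\B(\Omega)$-measurable: the function $\omega\mapsto \mathbf{P}_{\{1,\dots,n\}}(|H_m(\eta^{\pom,x,i})-\alpha|<\varepsilon)$ is measurable for each fixed $n$ (because $H_m$ is a continuous function of the measure in total variation, and the component measures $\eta^{\pom,x,i}$ depend measurably on $\om$), so the $\liminf$ over $n$ is measurable as well, and $A_{m,\varepsilon}$ is its super-level set at $1-\varepsilon$.

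Now, let $\Omega'\subseteq\Omega$ be the full $\PP$-measure set produced by Theorem \ref{thm:ued}. By the conclusion of that theorem, for every $\omega\in\Omega'$ there exists $m_0=m_0(\varepsilon,\omega)\in\N$ such that $\omega\in A_{m,\varepsilon}$ for all $m\ge m_0$. Equivalently,
\[
\Omega' \ \subseteq\ \bigcup_{m_0\in\N}\ \bigcap_{m\ge m_0} A_{m,\varepsilon} \ =\ \liminf_{m\to\infty} A_{m,\varepsilon}.
\]
Therefore, applying the Fatou lemma for events,
\[
1 \ =\ \PP(\Omega')\ \le\ \PP\!\left(\liminf_{m\to\infty} A_{m,\varepsilon}\right)\ \le\ \liminf_{m\to\infty}\PP(A_{m,\varepsilon}),
\]
which forces $\lim_{m\to\infty}\PP(A_{m,\varepsilon})=1$, the desired conclusion.

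Since Theorem \ref{thm:ued} does all the real work, there is no substantial obstacle here; the only subtlety worth being explicit about is the measurability of $A_{m,\varepsilon}$, which ensures that Fatou's lemma may legitimately be applied.
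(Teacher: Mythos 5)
Your proof is correct and matches the paper's intent: the paper states this as an ``immediate corollary'' of Theorem \ref{thm:ued} without further argument, and your Fatou-for-events (equivalently, dominated convergence) deduction from the eventual inclusion $\Omega'\subseteq\liminf_{m\to\infty}A_{m,\varepsilon}$ is exactly the routine step being left implicit. The brief remark on measurability of $A_{m,\varepsilon}$ is a reasonable addition and consistent with how the paper treats such quantities.
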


\subsection{Proof of Theorem \ref{teo2}}

We need one final proposition before we can conclude the proof of Theorem \ref{teo2}. In order to state it, observe that the measure $\eta^\pom$ can be decomposed for every $l \in \N$ as
\[
\eta^\pom = \nu^{(\om,l)} \ast \tau^{(\om,l)}
\]
where $\nu^{(\om,l)}$ is the $l$-level discrete approximation of $\eta^\pom$ defined in \S\ref{subsec:entropy-growth}, i.e. the projection of $\oeta^\pom$ under the $l$-truncated coding map, and
\[
\tau^{(\om,l)} := \lambda_{\om_1}\dots\lambda_{\om_l} \cdot \eta^{(T^l \om)}.
\]
We will study this decomposition with $l=n'=\ell^\pom_n$.

\begin{proposition}  \label{prop:intermediate-teo2}
\[
\lim_{n\rightarrow +\infty} \PP(\mathcal{G}_{\eps,\delta,q}^{(n)}) =1
\]
for any $\varepsilon,\delta > 0$ and $q\in\N$,  where
\[
\mathcal{G}_{\eps,\delta,q}^{(n)} = \left\{ \omega : \mathbf{P}_{n} \left( \frac{1}{qn} \left| H( \nu^{(\om,n')}_{x,i} \ast \tau^{(\om,n')},\D_{(q+1)n}) - H(\tau^{(\om,n')},\D_{(q+1)n})\right| < \delta \right) > 1-\varepsilon\right\}.
\]
\end{proposition}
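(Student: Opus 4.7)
By Proposition \ref{propentro}(v), the quantity $H(\nu^{(\om,n')}_{x,i} \ast \tau^{(\om,n')}, \D_{(q+1)n}) - H(\tau^{(\om,n')}, \D_{(q+1)n})$ is bounded below by $-O_R(1)$ pointwise. After division by $qn$, this lower tail is automatically $o(1)$, so the proposition reduces to the one-sided statement that, for each $\eps,\delta > 0$ and $q \in \N$,
\[ \PP\!\left(\om : \mathbf{P}_n\!\left(H(\nu^{(\om,n')}_{x,i}\ast\tau^{(\om,n')}, \D_{(q+1)n}) - H(\tau^{(\om,n')}, \D_{(q+1)n}) < \delta q n\right) > 1-\eps\right) \longrightarrow 1. \]

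\textbf{Rescaling and the inverse theorem.} I plan to rescale by $2^n$: setting $\tilde\tau^\pom := T_{2^n}\tau^{(\om,n')}$ and using Proposition \ref{propentro}(iv) together with the bounds $\lam_{\om_1}\cdots\lam_{\om_{n'}} \in [\lam_{\min},1]\cdot 2^{-n}/(2R)$, the measure $\tilde\tau^\pom$ is a bounded rescaling of $\eta^{(T^{n'}\om)}$ supported in a fixed compact interval, and the entropy difference becomes $qn\bigl[H_{qn}(\tilde\nu^\pom_{x,i}\ast\tilde\tau^\pom) - H_{qn}(\tilde\tau^\pom)\bigr] + O_R(1)$, where $\tilde\nu^\pom_{x,i} := T_{2^n}\nu^{(\om,n')}_{x,i}$ is supported on an interval of length $\approx 2^{n-i}$. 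Arguing by contradiction, suppose the one-sided statement fails: then along a subsequence $n_k\to\infty$ and for a positive-$\PP$-measure set $\Omega_0$ of $\om$, the set of $(x,i)$ with $H_{qn_k}(\tilde\nu^\pom_{x,i}\ast\tilde\tau^\pom) - H_{qn_k}(\tilde\tau^\pom) \geq \delta/2$ has $\mathbf{P}_{n_k}$-probability $\geq \eps$. Since $\tilde\nu^\pom_{x,i}$ may have support exceeding $[-R,R]$, I would decompose it into its raw unit-scale pieces $(\tilde\nu^\pom_{x,i})_K$, $K \in \D_0$, and combine concavity of entropy (Proposition \ref{propentro}(iii)) with the convolution bound $H(\mu\ast\tilde\tau^\pom,\D_{qn}) \leq H(\mu,\D_{qn}) + H(\tilde\tau^\pom,\D_{qn}) + O(1)$ to extract, for a positive fraction of such $(\om,x,i)$, a piece $\mu\in\P([-R,R])$ simultaneously satisfying $H_{qn_k}(\mu) > \eps'$ and $H_{qn_k}(\mu\ast\tilde\tau^\pom) \geq H_{qn_k}(\tilde\tau^\pom) + \delta'$, for constants $\eps',\delta' > 0$ depending only on $\eps,\delta,q$. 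The contrapositive of Hochman's inverse theorem (Theorem \ref{thm:inverse}), applied to $\tilde\tau^\pom$ at scale $qn_k$, then forces, for some fixed $m$ and $\eps'' > 0$ and for a positive-$\PP$-measure set of $\om$,
\[ \mathbf{P}_{\{1,\ldots,qn_k\}}\!\bigl(H_m\!\bigl((\tilde\tau^\pom)^{y,j}\bigr) \geq 1-\eps''\bigr) \geq \eps''. \]

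\textbf{Contradiction via uniform entropy dimension, and main obstacle.} This contradicts Corollary \ref{cor:uedp}: under the hypothesis $\alpha := \dim(\Sigma) < 1$ (implicit from the eventual use of the proposition inside Theorem \ref{teo2}), one chooses $\eps''$ so small that $\alpha + \eps'' < 1-\eps''$; the corollary combined with the $T$-invariance of $\PP$ and the deterministic bound $n'(\om) \leq C n$ yields a full-$\PP$-measure set on which the scale-$m$ rescaled components of $\eta^{(T^{n'(\om)}\om)}$ have entropy within $\eps''/3$ of $\alpha$ with $\mathbf{P}_N$-probability tending to $1$, and Proposition \ref{propentro}(iv) transfers this conclusion to $\tilde\tau^\pom$, contradicting the display. \textbf{The main obstacle} is the decomposition step: for $i \ll n$, the measure $\tilde\nu^\pom_{x,i}$ has support much larger than $[-R,R]$, and one must argue that a macroscopic share of both the entropy increment $\delta/2$ and the entropy $H_{qn_k}(\tilde\nu^\pom_{x,i})$ survives the passage to unit-scale pieces (rather than being absorbed into the entropy of the mixing weights, which is itself of order $n-i$), so that a single piece $\mu$ triggers the hypothesis of the inverse theorem. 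A secondary technicality is the uniform-in-shift transfer of the uniform entropy dimension statement from $\eta^{(\om)}$ to $\eta^{(T^{n'(\om)}\om)}$, which is handled by a union bound over the finitely many possible values of $n'(\om)$ combined with $T$-invariance.
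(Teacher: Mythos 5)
There is a genuine gap, and it is a logical one: you apply the inverse theorem in the wrong direction. Theorem \ref{thm:inverse} has the form ``if most components of $\tau$ have entropy $<1-\eps$, then every $\nu$ with $H_n(\nu)>\eps$ satisfies $H_n(\nu\ast\tau)\ge H_n(\tau)+\delta$.'' Its contrapositive is triggered by exhibiting a $\nu$ of non-negligible entropy for which the convolution does \emph{not} gain entropy. What your contradiction hypothesis hands you is the opposite: a piece $\mu$ with $H_{qn_k}(\mu)>\eps'$ and $H_{qn_k}(\mu\ast\tilde\tau^\pom)\ge H_{qn_k}(\tilde\tau^\pom)+\delta'$. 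That is precisely the \emph{conclusion} of the inverse theorem, so it is consistent with its hypothesis and implies nothing about the components of $\tilde\tau^\pom$; there is no theorem asserting ``entropy growth under convolution forces $\tau$ to have near-full-entropy components'' (indeed the opposite regime is when growth occurs). Consequently the final contradiction with Corollary \ref{cor:uedp} never materializes, and the argument does not close. A secondary defect is that you import the hypothesis $\dim(\Sigma)<1$, which the proposition does not assume and which the correct proof does not need.

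The missing idea is an entropy \emph{budget} argument, and with it the proposition becomes elementary -- no inverse theorem, no uniform entropy dimension, no contradiction scheme. By Corollary \ref{cor:entropydim} and the identity $\frac{1}{(q+1)n}H(\eta^\pom,\D_{(q+1)n})=\frac{1}{q+1}\cdot\frac1n H(\eta^\pom,\D_n)+\frac{q}{q+1}\cdot\frac{1}{qn}H(\eta^\pom,\D_{(q+1)n}|\D_n)$, one gets $\frac{1}{qn}H(\eta^\pom,\D_{(q+1)n}|\D_n)\to\alpha$ on a full-measure set. Concavity of conditional entropy applied to $\eta^\pom=\mathbf{E}_n(\nu^{(\om,n')}_{x,i})\ast\tau^{(\om,n')}$, together with the fact that each $\nu^{(\om,n')}_{x,i}\ast\tau^{(\om,n')}$ meets at most three cells of $\D_n$, gives $\limsup_n\frac{1}{qn}\mathbf{E}_n\bigl(H(\nu^{(\om,n')}_{x,i}\ast\tau^{(\om,n')},\D_{(q+1)n})\bigr)\le\alpha$. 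On the other hand $\frac{1}{qn}H(\tau^{(\om,n')},\D_{(q+1)n})\to\alpha$ in $\PP$-probability (rescaling plus Corollary \ref{cor:entropydim} and $T$-invariance), and your own first step (Proposition \ref{propentro}(v)) bounds each term below by this same quantity minus $O(1)$. A nonnegative random variable whose mean is $\le\alpha+\delta$ and which is pointwise $\ge\alpha-\delta$ must, by Markov, be within $O(\delta/\eps)$ of $\alpha$ -- hence of $\frac{1}{qn}H(\tau^{(\om,n')},\D_{(q+1)n})$ -- with $\mathbf{P}_n$-probability $>1-\eps$. The point is that $\tau^{(\om,n')}$ alone already exhausts the entropy budget $\alpha qn$, so no positive fraction of components can gain a linear amount of entropy; this is the mechanism your proposal never invokes, and it is also why the ``main obstacle'' you flag (surviving the passage to unit-scale pieces) simply does not arise.
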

\begin{proof}
Fix $q \in \N$ and consider the set $\Omega^*$ from Corollary \ref{cor:entropydim}. Now, take $\omega \in \Omega^*$ and observe that for any $n \in \N$ we have the identity
\[
\frac{1}{(q+1)n} H(\eta^\pom,\mathcal{D}_{(q+1)n}) = \frac{1}{q+1}\left(\frac{1}{n}H(\eta^\pom,\D_{n})\right) + \frac{q}{q+1}\left(\frac{1}{qn}H(\eta^\pom,\D_{(q+1)n}|\D_n)\right).
\]
Since $\om \in \Omega^*$, both the left-hand side above and the term $\frac{1}{n}H(\eta^\pom,\D_{n})$ converge to $\alpha$ as $n \rightarrow +\infty$.
It follows that
\begin{equation}
\label{eq:proof1}
\lim_{n \rightarrow +\infty} \frac{1}{qn}H(\eta^\pom,\D_{(q+1)n}|\D_n) = \alpha
\end{equation}as well.

By \eqref{eq:descomp} and the linearity of the convolution, for any Borel set $A \subseteq \R$ and $n \in \N$ we have
\[
\nu^{(\om,l)} \ast \tau^{(\om,l)}(A)= \mathbf{E}_{n}(\nu^{(\om,l)}_{x,i} \ast \tau^{(\om,l)}(A)).
\] Thus, if for $n \in \N$ we define $n':=\ell^\pom_n$ as in \eqref{ell}, the concavity of conditional entropy yields that
\begin{align}
\label{eq:hcond}
H(\eta^\pom,\D_{(q+1)n}|\D_n) &= H(\nu^{(\om,n')} \ast \tau^{(\om,n')},\D_{(q+1)n}|\D_n) \\
&\geq \mathbf{E}_{n}\left( H( \nu^{(\om,n')}_{x,i} \ast \tau^{(\om,n')},\D_{(q+1)n}|\D_n)\right).\nonumber
\end{align}
Moreover, since each of the measures $\nu^{(\om,n')}_{x,i} \ast \tau^{(\om,n')}$ is supported on an interval of length at most $4R\lambda_{\om_1}\dots\lambda_{\om_{n'}}$, this support can intersect at most three elements of $\D_n$, so that
\[
|H( \nu^{(\om,n')}_{x,i} \ast \tau^{(\om,n')},\D_{(q+1)n}|\D_n) - H( \nu^{(\om,n')}_{x,i} \ast \tau^{(\om,n')},\D_{(q+1)n})| =  O(1).
\]
Hence, from this and \eqref{eq:hcond} we obtain
\[
H(\eta^\pom,\D_{(q+1)n}|\D_n) \geq \mathbf{E}_{n}\left( H( \nu^{(\om,n')}_{x,i} \ast \tau^{(\om,n')},\D_{(q+1)n})\right) - O(1)
\] which, by \eqref{eq:proof1}, implies that
\[
\limsup_{n \rightarrow +\infty} \frac{1}{qn}\mathbf{E}_{n}\left( H( \nu^{(\om,n')}_{x,i} \ast \tau^{(\om,n')},\D_{(q+1)n})\right) \leq \alpha.
\] In particular, we obtain that for any $\delta > 0$
\begin{equation}
\label{eq:proof2}
\lim_{n \rightarrow +\infty} \PP\left( \left\{ \omega \in \Omega : \frac{1}{qn}\mathbf{E}_{n}\left( H( \nu^{(\om,n')}_{x,i} \ast \tau^{(\om,n')},\D_{(q+1)n})\right) < \alpha+\delta \right\}\right)=1.
\end{equation}
Now, on the other hand, observe that for each $\delta > 0$ we have
\begin{equation}
\label{eq:proof3}
\lim_{n \rightarrow +\infty} \PP \left( \left\{ \om \in \Omega : \left| \frac{1}{qn}H(\tau^{(\om,n')},\D_{(q+1)n}) - \alpha\right| < \delta\right\}\right) = 1
\end{equation}
since:
\begin{enumerate}
	\item [i.] For any $\omega \in \Omega$ the relation
	\begin{equation}
	\label{eq:equivtau}
	H(\tau^{(\om,n')},\D_{(q+1)n}) = H( \eta^{(T^{n'} \om)},\D_{qn}) + O(1)
	\end{equation} holds due to Proposition \ref{propentro}(iv) and the fact that
	\begin{equation} \label{eq:compn}
	2R \cdot 2^{-qn} < \frac{2^{-(q+1)n}}{\lambda_{\om_1}\dots\lambda_{\om_{n'}}} < \frac{2R}{\lambda_{\min}} \cdot 2^{-qn}.
	\end{equation}
	\item [ii.] By Corollary \ref{cor:entropydim} and the fact that $\PP$ is Bernoulli, one has
	\[
	\PP \left( \left\{ \om : \left| \frac{1}{qn}H(\eta^{(T^{n'}\om)},\D_{qn}) - \alpha\right| < \delta\right\}\right) = \PP {\left( \left\{ \om : \left| \frac{1}{qn}H(\eta^{\pom},\D_{qn}) - \alpha\right| < \delta\right\}\right)}
	\]	
tends to $1$ as $n\to\infty$.
\end{enumerate}
Also, by Proposition \ref{propentro}(v) for every raw component $\nu^{(\om,n')}_{x,i}$ one has
\[
\frac{1}{qn} H( \nu^{(\om,n')}_{x,i} \ast \tau^{(\om,n')},\D_{(q+1)n}) \geq \frac{1}{qn} H( \tau^{(\om,n')}, \D_{(q+1)n}) - O\left(\frac{1}{qn}\right),
\]
Combining this with Equations \eqref{eq:proof2} and \eqref{eq:proof3}, we get for any $\varepsilon,\delta > 0$ that
\[
\lim_{n \rightarrow +\infty}  \PP \left( \left\{ \omega \in \Omega :  \mathbf{P}_n \left( \left|\frac{1}{qn} H( \nu^{(\om,n')}_{x,i} \ast \tau^{(\om,n')},\D_{(q+1)n}) - \alpha\right| < \delta \right) > 1-\varepsilon\right\}\right)=1.
\]
The proof is concluded from a final application of \eqref{eq:proof3}.
\end{proof}

\begin{proof}[Proof of Theorem \ref{teo2}]
Fix $q \in \N$ and, given $\om \in \Omega$ and $\delta > 0$, let us consider (if it exists) a raw component $\nu^{(\om,n')}_{x,n}$ satisfying that
\begin{equation} \label{eq:entro1}
\left|\frac{1}{qn} H( \nu^{(\om,n')}_{x,n} \ast \tau^{(\om,n')},\D_{(q+1)n}) - \frac{1}{qn}H(\tau^{(\om,n')},\D_{(q+1)n})\right| < \frac{\delta}{2}.
\end{equation} Then, by scaling the measures $\nu^{(\om,n')}_{x,n}$ and $\tau^{(\om,n')}$ by $(\lambda_{\om_{1}}\dots \lambda_{\om_{n'}})^{-1}$, using the fact that $\lambda_{\om_{1}}\dots \lambda_{\om_{n'}} \approx 2^{-n}$ a new application of Proposition \ref{propentro}(iv) yields for $n$ sufficiently large (depending only on $\delta$)
\begin{equation} \label{eq:entro2}
H_{qn}( (\beta_n \cdot \nu^{(\om,n'),x,n}) \ast \eta^{(T^{n'} \om)}) < H_{qn}(\eta^{(T^{n'} \om)}) + \delta
\end{equation} where $\beta_n:=(2^n \lambda_{\om_{1}}\dots \lambda_{\om_{n'}})^{-1}$. Indeed, if for any $r > 0$ and countable partition $\mathcal{E}$ of $\R$ we define
$r\cdot \mathcal{E} :=\{ r\cdot E : E \in \mathcal{E}\}$, then we have
\begin{align*}
H( \nu^{(\om,n')}_{x,n} \ast \tau^{(\om,n')},\D_{(q+1)n}) &= H( \nu^{(\om,n')}_{x,n} \ast \tau^{(\om,n')},2^{-n}\cdot \D_{qn})\\
& = H\left( (\lambda_{\om_{1}}\dots \lambda_{\om_{n'}})^{-1} \cdot \left(\nu^{(\om,n')}_{x,n} \ast \tau^{(\om,n')}\right), \beta_n\cdot \D_{qn}\right)\\
& = H( \beta_n \cdot (2^n \cdot \nu^{(\om,n')}_{x,n}) \ast \eta^{(T^{n'}\omega)},\beta_n \cdot \mathcal{D}_{qn})\\
& = H ( (\beta_n \cdot \nu^{(\om,n'),x,n}) \ast \eta^{(T^{n'}\omega)},\beta_n \cdot \mathcal{D}_{qn}) + O(1)\\
& = H ( (\beta_n \cdot \nu^{(\om,n'),x,n}) \ast \eta^{(T^{n'}\omega)}, \mathcal{D}_{qn}) + O_{R,\lambda_{\min}}(1),
\end{align*} where in the second equality we have used that $H(r\cdot \nu, r \cdot \mathcal{E})= H(\nu,\mathcal{E})$ holds for any $r>0$, probability measure $\nu$ and partition $\mathcal{E}$, and in the fourth and fifth inequalities we have used Proposition \ref{propentro}(iv) together with the facts that $\nu^{(\om,n'),x,n}$ is obtained from $2^n \cdot \nu^{(\om,n')}_{x,n}$ by translation (and that the convolution commutes with translations) and that $\beta_n \approx 1$, respectively. From this and \eqref{eq:entro1}, using \eqref{eq:equivtau} it is now easy to obtain \eqref{eq:entro2}.

The idea then is to apply the inverse theorem (Theorem \ref{thm:inverse}) on \eqref{eq:entro2} to conclude. The problem is that $\eta^{(T^{n'} \om)}$ will not always fall under the hypotheses of the theorem, but will rather do so only with high probability. Hence, we must refine our argument.

Consider a sequence $(\varepsilon_j)_{j \in \N} \subseteq \R_{> 0}$ such that $\sum_j \eps_j <+\infty$ and, using Corollary \ref{cor:uedp} and the fact that $\PP$ is Bernoulli, for each $j \in \N$ choose $m_j \in \N$ such that for all $n$ sufficiently large
\begin{equation} \label{eq:uedp}
\PP \left( \left\{ \omega \in \Omega : \mathbf{P}_{\{1,\dots,n\}} \left( \left| H_{m_j}\left(\eta^{(T^{n'}\om),x,i}\right) - \alpha\right| < \varepsilon_j \right) > 1 - \varepsilon_j \right\}\right) > 1 - \frac{\varepsilon_j}{2}.
\end{equation}
Furthermore, for each $j \in \N$ take $\delta_j := \delta(\varepsilon_j,\frac{2R}{\lambda_{\min}},m_j)$ given by Theorem \ref{thm:inverse}, and choose $n_j \geq n(\varepsilon_j,\delta_j,\frac{2R}{\lambda_{\min}},m_j)$, where $n(\varepsilon_j,\delta_j,\frac{2R}{\lambda_{\min}},m_j)$ is also the one from Theorem \ref{thm:inverse}, such that
\begin{equation}  \label{eq:uedp2}
\PP(\mathcal{G}_{\eps_j,\frac{\delta_j}{2},q}^{(n_j)})\ge  1- \frac{\varepsilon_j}{2},
\end{equation}
where $\mathcal{G}_{\eps_j,\delta_j,q}^{(n_j)}$ are the events from Proposition \ref{prop:intermediate-teo2}.

For each $j \in \N$ let $\Omega_j$ be the intersection of the events in \eqref{eq:uedp} and \eqref{eq:uedp2}, where we put $n=n_j$ in \eqref{eq:uedp} (making $n_j$ larger if needed). Assume that $\alpha<1$ and fix $\om\in\Omega_j$. By Theorem \ref{thm:inverse} and since $\omega$ belongs to the event in \eqref{eq:uedp}, we have the implication
\[
H_{qn_j}(\nu) > \varepsilon_j \Longrightarrow H_{qn_j}(\nu* \eta^{(T^{n'}\om)}) \ge H_{qn_j}(\eta^{(T^{n'}\om)}) + \delta_j,
\]
for all $\nu\in \mathcal{P}([0,\frac{2R}{\lambda_{\min}}])$ and all $j$ large enough so as to guarantee that $\alpha+ \varepsilon_j < 1 - \varepsilon_j$. Now, since $\beta_n \cdot \nu^{(\om,n'),x,n} \in \mathcal{P}([0,\frac{2R}{\lambda_{\min}}])$ due to \eqref{eq:compn}, by \eqref{eq:entro2} we obtain that
\[
H_{qn_j}(\beta_n \cdot \nu^{(\om,n'),x,n}) \leq \varepsilon_j
\]
for any raw component $\nu^{(\om,n')}_{x,n}$ satisfying \eqref{eq:entro1} with $\delta=\delta_j$, if $n_j$ is taken sufficiently large (depending only on $\delta_j$). Moreover, since $\beta_n \approx 1$ by \eqref{eq:compn}, using Proposition \eqref{propentro}(iv) again, we see that for $n_j$ sufficiently large (depending only on $\varepsilon_j$) we have in fact that
\[
H_{qn_j}(\nu^{(\om,n')}_{x,n}) \leq 2\varepsilon_j.
\]
Thus, since $\omega$ belongs to the event in \eqref{eq:uedp2}, we get
\[
\mathbf{P}_{n_j} \left( H_{qn_j}( \nu^{(\om,n_j'),x,i}) \leq 2\varepsilon_j \right) > 1 - \varepsilon_j.
\]
Finally, by the identity in \eqref{eq:identity} and the fact that $H_n(\nu)\leq 1$ for any measure $\nu$ supported on the interval $[0,1]$, we deduce that
 \[
 \frac{1}{qn_j} H( \nu^{(\om,n_j')} , \D_{(q+1)n_j}|\D_{n_j}) \leq  3\varepsilon_j.
 \]
 Recall now that this holds for a fixed $\om\in\Omega_j$ where $\PP(\Omega_j)>1-\eps_j$. Since $\sum_j \eps_j<+\infty$, we conclude from the Borel-Cantelli Lemma that
 \[
 \lim_{j \rightarrow +\infty} \frac{1}{qn_j} H( \nu^{(\om,n_j')} , \D_{(q+1)n_j}|\D_{n_j}) = 0
 \]
 for $\PP$-almost every $\omega \in \Omega$. Finally, since the argument above can be repeated exactly to show that for any subsequence $(n_k)_{k \in \N}$ there exists a further subsequence $(n_{k_j})_{j \in \N}$ so that
 \[
 \frac{1}{qn_{k_j}} H(\nu^{(\om,n_{k_j}')} , \D_{(q+1)n_{k_j}}|\D_{n_{k_j}}) \longrightarrow 0
 \]
 for $\PP$-almost every $\omega$, this immediately yields
 \[
 \frac{1}{qn} H( \nu^{(\cdot,n')} , \D_{(q+1)n}|\D_{n}) \overset{\PP}{\longrightarrow} 0
 \] which concludes the proof.
\end{proof}

\subsection{Proof of Theorem \ref{thm:sup-exp-conc}}\label{subsec:endproof}

We show now how to derive Theorem \ref{thm:sup-exp-conc} from  Theorem \ref{teo2}. First, let us fix $\om \in \Omega^*$ with $\Omega^*$ as in Corollary \ref{cor:entropydim} and, for each $n \in \N$, consider again the decomposition
\[
\eta^\pom = \nu^{(\om,n')} \ast \tau^{(\om,n')},
\]
where $n':=\ell^\pom_n$. The following lemma shows that $\nu^{(\om,n')}$ is a good approximation of $\eta^\pom$ at scale $n$.

\begin{lemma}\label{lemma:convesp}
For each $\omega\in\Omega^*$,
\[
\lim_{n \rightarrow +\infty} H_n(\nu^{(\om,n')}) = \alpha.
\]
\end{lemma}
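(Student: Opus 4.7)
The plan is to compare $\eta^\pom$ and $\nu^{(\om,n')}$ through a common coupling and show that their $\mathcal{D}_n$-entropies differ by at most $O(1)$; the conclusion will then follow from the fact that $H_n(\eta^\pom)\to\alpha$, which holds by Corollary~\ref{cor:entropydim} since $\om\in\Omega^*$.

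The key estimate I would establish first is the following. If $U\sim\overline{\eta}^\pom$ and we set $X:=\Pi_\om(U)$ (so $X\sim\eta^\pom$) and $Y:=\Pi_\om^{(n')}(U)$ (so $Y\sim\nu^{(\om,n')}$), then $|X-Y|\le 2^{-n}$. Indeed,
\[
X - Y \;=\; \sum_{k=n'+1}^{\infty}\left(\prod_{j=1}^{k-1}\lam_{\om_j}\right)t_{U_k}^{(\om_k)} \;=\; \left(\prod_{j=1}^{n'}\lam_{\om_j}\right)\,\Pi_{T^{n'}\om}(T^{n'}U),
\]
and since $\Pi_{T^{n'}\om}$ takes values in $[-R,R]$, the right-hand side has absolute value bounded by $R\prod_{j=1}^{n'}\lam_{\om_j}\le 2^{-n-1}$ by the very definition \eqref{ell} of $n'=\ell_n^\pom$.

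Next, let $X_n$ and $Y_n$ denote the $\mathcal{D}_n$-cells containing $X$ and $Y$, respectively. Since $|X-Y|\le 2^{-n}$, once $X_n$ is known, $Y_n$ is one of at most three candidates (the cell $X_n$ itself or one of its two immediate neighbors), and symmetrically. Hence $H(Y_n\mid X_n)\le\log 3$ and $H(X_n\mid Y_n)\le\log 3$, which yields
\[
\bigl|H(\eta^\pom,\mathcal{D}_n) - H(\nu^{(\om,n')},\mathcal{D}_n)\bigr| \;=\; |H(X_n) - H(Y_n)| \;\le\; \log 3.
\]
Dividing by $n$ and letting $n\to\infty$ finishes the argument. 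I do not anticipate any serious obstacle here: the only delicate point is that the choice $n'=\ell_n^\pom$ is tailored to $\om$ precisely so as to synchronize the coupling distance with the partition scale, which is exactly what \eqref{ell} is designed to achieve.
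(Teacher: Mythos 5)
Your proof is correct and is essentially the paper's argument: both exploit that $\Pi_\om$ and $\Pi_\om^{(n')}$ agree up to an error of size $O(2^{-n})$ (by the defining property \eqref{ell} of $n'=\ell^\pom_n$), so the two level-$n$ dyadic entropies of the push-forwards of the common measure $\oeta^\pom$ differ by $O(1)$. The paper phrases this by applying Proposition \ref{propentro}(iv) to the pullback partitions $(\Pi_\om)^{-1}\mathcal{D}_n$ and $(\Pi_\om^{(n')})^{-1}\mathcal{D}_n$, whereas you unwind that same fact through the coupling and the chain rule for conditional entropy; the content is identical.
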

\begin{proof}
By the definitions of $\nu^{(\om,n')}$ and $\eta^\pom$, we have that
\begin{align*}
H_n(\nu^{(\om,n')}) &= \frac1n H(\oeta^\pom, (\Pi_\om^{(n')})^{-1}\mathcal{D}_n),\\
H_n(\eta^\pom) &=\frac1n H(\oeta^\pom, (\Pi_\om)^{-1}\mathcal{D}_n).
\end{align*}
In light of Proposition \ref{propentro}(iv), it is enough to show that each element of $(\Pi_\om^{(n')})^{-1}\mathcal{D}_n$ meets at most $O(1)$ elements of $(\Pi_\om)^{-1}\mathcal{D}_n$ and vice versa. But this is indeed the case, since
\[
\|\Pi_\om-\Pi_\om^{(n)}\|_\infty \le \lambda_{\om_1}\cdots\lambda_{\om_{n'}} (1-\lam_{\max})^{-1}\max_{i,u}{|t_u^{(i)}|} = O(2^{-n}).
\]\end{proof}

In particular, since $\Omega^*$ is a full $\PP$-measure set, the convergence in Lemma \ref{lemma:convesp} above takes place $\PP$-almost surely. Thus, if $\alpha < 1$ then for any fixed $q \in \N$ the decomposition
\[
\frac{H(\nu^{(\om,n')},\mathcal{D}_{(q+1)n})}{n} = H_n(\nu^{(\om,n')}) + \frac{H(\nu^{(\om,n')},\D_{(q+1)n}|\D_n)}{n}
\] together with Theorem \ref{teo2} yields that
\[
\frac{H(\nu^{(\cdot,n'(\cdot))},\mathcal{D}_{(q+1)n})}{n} \overset{\PP}{\longrightarrow} \alpha.
\] In particular, if also $\alpha < \textup{s-dim }(\Sigma)$, then there exists $\delta > 0$ such that
\begin{equation} \label{eq:ac}
\lim_{n \rightarrow +\infty} \PP \left( \left\{ \om \in \Omega :  \frac{H(\nu^{(\om,n')},\D_{(q+1)n})}{n} < \textup{s-dim}(\Sigma) - \delta \right\}\right) = 1.
\end{equation} Now, observe that for the approximation $\nu^{(\om,n')}$ we have the following representation:
	\[
	\nu^{(\om,n')} = \sum_{u \in \X^\pom_{n'}} p^\pom_{u} \cdot \delta_{f^\pom_u(0)}.
	\] Thus, if each value $f^\pom_u(0)$ belonged to a different element of $\D_{(q+1)n}$ then we would have
	\begin{equation}
	\label{eq:ac2}
	\frac{H(\nu^{(\om,n')},\D_{(q+1)n})}{n} = \frac{ \sum_{u \in \X^\pom_{n'}} p^\pom_{u} \log( p^\pom_{u})}{n} = \frac{n'}{n} \cdot  \frac{\sum_{i=1}^{n'} H(p_{\om_i})}{n'}=: \textup{s-dim}(\Sigma^\pom_n).
	\end{equation} Hence, if the left-hand side of \eqref{eq:ac2} is different from $\textup{s-dim}(\Sigma^\pom_n)$, then at least two of these values $f^\pom_u(0)$ must belong to the same element of $\D_{(q+1)n}$, implying that $\Delta^\pom_n \leq 2^{-(q+1)n}$. But applying the ergodic theorem to the maps $\om\mapsto H(p_{\om_1})$ and $\om\mapsto \log\lam_{\om_1}$, yields that $\textup{s-dim}(\Sigma^\pom_n) \rightarrow \textup{s-dim}(\Sigma)$ for $\PP$-a.e. $\omega \in \Omega$. Combining this fact with \eqref{eq:ac}, we conclude
	\[
	\lim_{n \rightarrow +\infty} \PP \left( \left\{ \om \in \Omega :  \frac{\log \Delta^\pom_n}{n} \leq -(q+1)\right\}\right) = 1.
	\] Since $q \in \N$ was arbitrary, the result immediately follows.

\section{Fourier transform estimates}
\label{sec:EK}

Throughout this section we will assume that $I=\{1,\ldots,N\}$ for notational simplicity.

Theorem \ref{thm:Fourier-decay} is a direct corollary of the following more precise statement.
\begin{theorem} \label{thm:Fourier-precise}
let $(\beta_i)_{i\in \{1,\ldots,N\}}$ be strictly positive numbers, and let $\PP$ be a fully supported Bernoulli measure on $\Omega=I^\N$.

Fix a compact set
\[
H= [a,b]\times [\beta_{\min},\beta_{\max}]\times [p_{\min},p_{\max}] \subset (1,\infty) \times (0,\infty)\times (0,1),
\]
and fix also $\alpha>0$.

Then there exist a Borel set $\mathcal{G}_{H,\alpha}\subset \Omega\times [b^{-1},a^{-1}]$ and a  constant $\sig=\sig(H,\alpha)>0$ such that the following hold:
\begin{enumerate}
\item [(i).]
For $\PP$-almost all $\om$,
\[
\dim_H\{\lam\in [b^{-1},a^{-1}]: (\om,\lam)\notin \mathcal{G}_{H,\alpha} \} \le \alpha.
\]
\item[(ii).] If $(\om,\lam)\in\mathcal{G}_{H,\alpha}$, and $\Sigma=((\Phi^{(i)})_{i \in I},(p_i)_{i \in I},\PP)$ is a non-degenerate model such that:
\begin{itemize}
 \item for each $i \in I$, the mappings $f_j^{(i)}$ have contraction ratio $\lam^{\beta_i}$,
 \item $p_j^{(i)}\in [p_{\min},p_{\max}]$ for all $i \in I$ such that $k_i\ge 2$ and all $j\in\{1,\ldots,k_i\}$,
\end{itemize}
then $\eta^\pom\in\mathcal{D}_1(\sigma)$.
\end{enumerate}

\end{theorem}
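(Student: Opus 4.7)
The plan is to exploit the fact that, conditionally on $\om$, the measure $\eta^{(\om)}$ is an infinite convolution of discrete measures, and to control its Fourier transform via a randomized version of the Erd\H{o}s--Kahane method. Indeed, since $\Pi_\om(u) = \sum_n \lam^{S_{n-1}(\om)} t^{(\om_n)}_{u_n}$ with $S_n(\om) := \sum_{j=1}^n \beta_{\om_j}$, and the coordinates of $U=(U_n)_n$ are independent, one obtains
\[
\widehat{\eta^{(\om)}}(\xi) = \prod_{n=1}^\infty \widehat{\mu_n^{(\om)}}(\xi), \qquad \mu_n^{(\om)} := \sum_{j=1}^{k_{\om_n}} p_j^{(\om_n)}\, \delta_{\lam^{S_{n-1}(\om)}\, t^{(\om_n)}_j}.
\]
The strategy is to show that, outside a small set of $(\om,\lam)$, a positive proportion of the factors on the right-hand side are bounded away from $1$ in modulus; since each factor has modulus at most $1$, this yields power Fourier decay at rate $|\xi|^{-\sig}$ for a suitable $\sig=\sig(H,\alpha)>0$.

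Using non-degeneracy, I fix $i_0 \in I$ and $u_1\neq u_2$ with $\tau := t^{(i_0)}_{u_1} - t^{(i_0)}_{u_2}\neq 0$. A standard second-moment estimate then shows that, for some constants $c_0,\delta_0>0$ depending only on $H$, whenever $\om_n=i_0$ and the phase $\tfrac12 \lam^{S_{n-1}(\om)}\xi\tau$ lies at distance at least $\delta_0$ from $\mathbb{Z}$, one has $|\widehat{\mu_n^{(\om)}}(\xi)|\le 1-c_0$. Because $\PP$ is Bernoulli, for $\PP$-a.e.\ $\om$ the set $N(\om):=\{n:\om_n=i_0\}$ has positive asymptotic density $q_{i_0}$, and $S_n(\om)/n$ concentrates exponentially around $\EE_\PP[\beta_{\om_1}]$. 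Fixing such $\om$, the task reduces to bounding the Hausdorff dimension of those $\lam\in[b^{-1},a^{-1}]$ for which, along $N(\om)$, the integer parts
\[
a_n(\om,\lam,\xi) := \lfloor \lam^{S_{n-1}(\om)}\xi\tau\rfloor
\]
form a ``bad'' history. The recursion $a_{n+1}\approx a_n\,\lam^{\beta_{\om_n}}$, together with the near-integer constraint, forces $a_{n+1}$ into a set of bounded size given $a_n$; this is precisely the setup of the Erd\H{o}s--Kahane pigeonhole argument, adapted so that the exponents $S_n(\om)-S_{n-1}(\om)=\beta_{\om_n}$ vary with $n$ but remain in $[\beta_{\min},\beta_{\max}]$.

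A length-$L$ admissible bad sequence pins $\lam$ to an interval of length at most $e^{-\rho L}$ for some $\rho=\rho(H)>0$, while the number of such sequences is at most $K^L$ for some $K=K(H)$. Summing over $\xi$ in each dyadic annulus $|\xi|\asymp 2^m$ and then over $m$ produces an $\alpha$-Hausdorff cover of the exceptional $\lam$, provided $\sig$ is chosen small enough in terms of $\alpha, c_0, q_{i_0}, \rho$, and $K$. Defining $\mathcal G_{H,\alpha}$ as the set of $(\om,\lam)$ for which the infinite product decays at rate $|\xi|^{-\sig}$ once $|\xi|$ exceeds some threshold $\xi_0(\om)$ then establishes both (i) and (ii); joint Borel measurability is immediate from the continuity of $\widehat{\eta^{(\om)}}(\xi)$ in $\lam$ together with the Borel measurability in $\om$. \textbf{The main obstacle} is to extend the Erd\H{o}s--Kahane pigeonhole counting---classically deterministic and relying crucially on the exact geometric growth of $\lam^n$---to random exponents $S_n(\om)$: one must uniformly control the fluctuations of $S_n(\om)/n$ on a full $\PP$-measure set and ensure all implicit constants in the covering argument depend only on $H$ and not on the specific model, so that the resulting $\sig$ and $\mathcal G_{H,\alpha}$ are genuinely uniform in the sense required by later applications.
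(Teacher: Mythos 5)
Your overall strategy coincides with the paper's: write $\eta^{(\om)}$ as an infinite convolution, keep only the factors at times $n$ with $\om_n=i_0$, and run an Erd\H{o}s--Kahane pigeonhole argument along those times to cover the exceptional set of $\lam$. However, the counting step --- which is the entire content of the theorem --- contains a genuine gap. You assert that the number of admissible bad histories of length $L$ is at most $K^L$ with $K=K(H)$. This is wrong on two counts. First, even if it were true, it would not suffice: to get Hausdorff dimension $\le\alpha$ for \emph{arbitrary} $\alpha>0$ from intervals of length $e^{-\rho L}$ you need the count to be $e^{c L}$ with $c$ arbitrarily small, i.e.\ sub-exponential with rate tending to $0$ as the permitted density $\delta$ of ``non-decaying'' factors tends to $0$; a fixed base $K(H)$ kills the argument. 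Second, the branching factor at a single step is \emph{not} bounded by a constant depending on $H$: the recursion must be run between consecutive occurrences of $i_0$, so the relevant ratio is $\theta^{\beta(W)}$ where $W$ is the intervening block of symbols, and when the near-integer constraint fails at that step the integer $K_{n+2}$ ranges over $\asymp b^{\beta_{\max}|W|}$ values --- unbounded in $n$, since the waiting times between occurrences of $i_0$ are unbounded.

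This is precisely where the paper's Lemma~\ref{lem:proba} enters and where your diagnosis of the obstacle misses the point. What is needed is not concentration of the Birkhoff sums $S_n(\om)/n$ (that only controls the relation between $M$ and $\log|\xi|$, and the count of the initial data $K_0,K_1$), but a large-deviations estimate for the waiting times $X_n$ between occurrences of $i_0$ summed over an \emph{arbitrary} subset $\Psi\subset\{0,\ldots,M-1\}$ of density $\varrho$: one must show $\max_{|\Psi|\le\varrho M}\sum_{n\in\Psi}X_n\le L_1\varrho\log(1/\varrho)M$ eventually, $\PP$-a.s. Only with this does the product of branching factors over the $\delta M$ bad steps become $\exp\bigl(O_H(1)\,\delta\log(1/\delta)M\bigr)$, which, combined with the binomial count of the possible positions of the bad steps, can be beaten by $a^{\alpha M/2}$ for $\delta$ small. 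Without this lemma (or an equivalent), your covering estimate does not close, so the proposal as written does not prove the theorem.
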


Theorem \ref{thm:Fourier-decay} follows immediately by taking
\[
\mathcal{G} = \bigcup_{k,n} \mathcal{G}_{H_n,1/k},
\]
where $H_n = [1+1/n,n]\times [\min \beta_i,\max\beta_i] \times [1/n,1-1/n]$.

From now on, we fix the numbers $\beta_i$ and the Bernoulli measure $\PP$, as in the statement of Theorem \ref{thm:Fourier-precise}.

\subsection{Probabilistic estimates}

Let
\[
X_1 = X_1(\om) = \min\{i\ge 1:\ \om_i=1\},\ \ \ X_{n+1}(\om) = X_1(T^{X_1(\om)+\dots +X_n(\om)}\om),\ \ n\ge 1.
\]
Thus, $X_{n+1}(\om)$ is the waiting time between the $n$-th and $(n+1)$-st appearance of 1 in the sequence $\om$. (Later on we will assume that the symbol $1$ ensures non-degeneracy of the model, i.e.  $k_1\ge 2$ and $t_1^{(1)} \ne t_2^{(1)}$.) The process $(X_n)_{n\in \N}$ is defined almost surely (when there are infinitely many 1's in $\om$). Since $\PP$ is Bernoulli, $(X_n)_{n\in \N}$ is an i.i.d.\ sequence of exponential random variables.

Note that there exists $\eps>0$ such that
\be \label{expo}
C_1:= \E[e^{\eps X_1}]\in (1,\infty).
\ee

Before embarking on the proof of Theorem~\ref{thm:Fourier-precise}, we state the property which provides the full measure set appearing in the first part of Theorem~\ref{thm:Fourier-precise}.

\begin{lemma} \label{lem:proba}
Consider the process $(X_n)_{n\in \N}$ defined as above. There exists a positive constant $L_1$ such that for $\PP$-a.e. $\om$, for any $\varrho>0$ and all $M$ sufficiently large (depending on $\varrho$ and $\om$),
\be \label{cond2}
\max\left\{\sum_{n\in \Psi} X_n(\om):\ \Psi \subset \{0,\ldots,M-1\}, \ |\Psi| \le \varrho M\right\} \le L_1\cdot \log(1/\varrho)\cdot \varrho M.
\ee
\end{lemma}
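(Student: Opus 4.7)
Denote $K = \lfloor \varrho M\rfloor$ and observe that the maximum in the statement equals $S_M^{(K)}(\om) := X_{(1)}(\om) + \cdots + X_{(K)}(\om)$, where $X_{(1)} \ge X_{(2)} \ge \cdots$ are the decreasing rearrangement of $\{X_n(\om):n=0,\ldots,M-1\}$. Since $(X_n)_{n\in\N}$ is i.i.d.\ geometric under the Bernoulli $\PP$, the exponential moment $C_1=\E[e^{\eps X_1}]<\infty$ in \eqref{expo} gives the Chebyshev-type tail
\[
\E[(X_1-T)_+] \;\le\; \frac{C_1}{e\eps}\,e^{-\eps T} \qquad (T\ge 0),
\]
using the elementary inequality $u \le e^{\eps u}/(e\eps)$ for $u\ge 0$. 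Observing that $X_{(i)}\le T+(X_{(i)}-T)_+$, summation over $i\le K$ yields the deterministic truncation bound
\begin{equation}\label{eq:truncbd}
S_M^{(K)}(\om) \;\le\; K\,T \;+\; \sum_{n=0}^{M-1}(X_n(\om)-T)_+.
\end{equation}

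The plan is now to choose the threshold $T=T_\varrho:=\eps^{-1}\log(1/\varrho)$ so that $e^{-\eps T_\varrho}=\varrho$, making both terms on the right of \eqref{eq:truncbd} of order $\varrho M\log(1/\varrho)$ and $\varrho M$ respectively. To secure a single null set which is valid for every $\varrho$, I would first apply the strong law of large numbers separately to each of the countably many functions $\om\mapsto (X_1(\om)-T)_+$, $T\in D$, where $D\subset(0,\infty)$ is a fixed countable dense subset containing $\{T_\varrho:\varrho\in\Q\cap(0,1)\}$. Intersecting the resulting full-measure events gives a set $\Omega^\ast$ with $\PP(\Omega^\ast)=1$ on which
\[
\frac{1}{M}\sum_{n=0}^{M-1}(X_n(\om)-T)_+ \;\xrightarrow[M\to\infty]{}\; \E[(X_1-T)_+] \qquad \text{for every } T\in D.
\]

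Fix $\om\in\Omega^\ast$ and $\varrho>0$. Pick a rational $\varrho'\in[\varrho,2\varrho]\cap(0,1)$ (so $T_{\varrho'}\in D$); then $\lfloor\varrho M\rfloor\le\lfloor\varrho'M\rfloor$ and hence $S_M^{(\lfloor\varrho M\rfloor)}\le S_M^{(\lfloor\varrho'M\rfloor)}$. For $M$ large enough that the SLLN estimate holds with the factor of $2$, combining with \eqref{eq:truncbd} gives
\[
S_M^{(\lfloor\varrho M\rfloor)}(\om) \;\le\; \varrho' M\cdot \eps^{-1}\log(1/\varrho') \;+\; \frac{2C_1}{e\eps}\,\varrho' M \;\le\; L_1\,\varrho\log(1/\varrho)\, M,
\]
for a constant $L_1=L_1(\eps,C_1)$ independent of $\varrho$ and $\om$; here I used that $\varrho'\log(1/\varrho')\le 2\varrho\log(1/\varrho)$ and $\varrho'\le 2\varrho\le 2\varrho\log(1/\varrho)$ whenever $\log(1/\varrho)\ge 1$. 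The regime $\varrho$ bounded away from $0$ is either trivial or handled by enlarging $L_1$, using the crude bound $S_M^{(K)}\le\sum_{n<M}X_n\le 2\E[X_1]M$ coming from another application of the SLLN.

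The main technical point — and the only place one has to argue with any care — is the transfer from the countably many rational thresholds $T\in D$ handled by the SLLN to an arbitrary $\varrho>0$. This is resolved by the monotonicity of both $T\mapsto M^{-1}\sum(X_n-T)_+$ and $T\mapsto\E[(X_1-T)_+]$ together with the continuity of the latter, which allow us to replace $T_\varrho$ by a nearby $T_{\varrho'}\in D$ at negligible cost in constants. All other steps are routine exponential-tail estimates.
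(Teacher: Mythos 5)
Your proof is correct, and it takes a genuinely different route from the paper's. The paper fixes a subset $\Psi$, applies the exponential Chebyshev bound \eqref{fact} to $\sum_{n\in\Psi}X_n$, union-bounds over all subsets of size at most $\varrho M$ via Stirling's estimate $\sum_{i\le\varrho M}\binom{M}{i}\le\exp\bigl(C_2\varrho\log(1/\varrho)M\bigr)$, and concludes with Borel--Cantelli; there the factor $\log(1/\varrho)$ arises because the large-deviation rate $\eps K\varrho M$ must dominate the combinatorial entropy $\varrho\log(1/\varrho)M$ of choosing $\Psi$. You instead make the deterministic observation that the maximum is the sum of the top $\lfloor\varrho M\rfloor$ order statistics, truncate at $T_\varrho=\eps^{-1}\log(1/\varrho)$, and control the total excess $\sum_{n<M}(X_n-T_\varrho)_+$ by the strong law of large numbers; the factor $\log(1/\varrho)$ now appears as the truncation level needed to drive $\E[(X_1-T)_+]\le\tfrac{C_1}{e\eps}e^{-\eps T}$ down to order $\varrho$. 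Both arguments are sound and yield the same quantifier structure; your approach avoids the combinatorial union bound entirely and makes the uniformity in $\varrho$ explicit (rational approximation $\varrho'\in[\varrho,2\varrho]$ plus monotonicity), a step the paper leaves implicit since its Borel--Cantelli argument is run for each fixed $\varrho$. The trade-off is that the paper's method produces exponentially small failure probabilities for each $M$, which your SLLN argument does not, though nothing in the sequel requires such rates. Two minor points worth recording: the factor-of-two SLLN step needs $\E[(X_1-T_{\varrho'})_+]>0$, which holds because $X_1$ is an unbounded geometric variable under the fully supported Bernoulli measure (and the claim is trivial otherwise); and, exactly as in the paper's proof (whose Stirling bound is stated only for $\varrho<e^{-1}$), the estimate is only meaningful for small $\varrho$, since the right-hand side of \eqref{cond2} degenerates as $\varrho\to1$.
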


\begin{remark}
Observe that for any given $\wtil{\varrho}>0$, we have for $\PP$-almost all $\om$ and all $n$ sufficiently large (depending on $\om$) that
\be \label{cond1}
X_n(\om) \le \wtil{\varrho} n;
\ee
Indeed, $\frac{1}{n} \sum_{i=1}^n X_i \to \E[X_1]< \infty$ almost surely, by the Law of Large Numbers, and hence $X_n(\om)/n\to 0$ for $\PP$-a.e.\ $\om$.
\end{remark}

\begin{proof}[Proof of Lemma~\ref{lem:proba}]
Note that, since the $(X_i)_{i \in \N}$ are i.i.d., by the exponential Tchebychev inequality we have
\begin{equation}
\label{fact}
\PP\Bigl[\sum_{i=1}^{n} X_i \ge Kn\Bigr] \le  \E[e^{\varepsilon \sum_{i=1}^n X_i}]e^{-\varepsilon K n} = C_1^n e^{-\varepsilon Kn} \leq e^{-\eps Kn/2}
\end{equation}
for any  $K\ge 2\log C_1/\eps$, where $\eps$ and $C_1$ are the ones from (\ref{expo}). Furthermore, again using that $(X_i)_{i \in \N}$ are i.i.d., we can use the version of \eqref{fact} in which $\sum_{i=1}^n X_i$ is replaced by $\sum_{k=1}^n X_{i_k}$ for an arbitrary (but deterministic) increasing sequence $i_1 < \ldots < i_n$.
Now, consider the event
\[
\Wk(M,\varrho,K) = \left\{\max_{\stackrel{\scriptstyle{\Psi\subset \{0,\ldots,M-1\}}}{|\Psi|\le\varrho M}} \sum_{n\in \Psi} X_n \ge K(\varrho M)\right\}
\]
Then we have for $K \ge 2\log C_1/\eps$,
\begin{eqnarray*}
\PP\bigl(\Wk(M,\varrho,K)\bigr)
& \le & \!\!\!\!\sum_{\stackrel{\scriptstyle{\Psi\subset \{0,\ldots,M-1\}}}{|\Psi|\le\varrho M}}  \PP\left[ \sum_{n\in \Psi} X_n \ge K(\varrho M)\right] \\[1.2ex]
& \le & \sum_{i\le \varrho M} {M\choose i}e^{-\eps K(\varrho M)/2},
\end{eqnarray*}
in view of (\ref{fact}).
By Stirling's formula,  there exists $C_2>1$ such that
\[\sum_{i\le \varrho M} {M\choose i}\le \exp\left[C_2\varrho \log(1/\varrho)M\right]\ \ \mbox{for}\ \  \varrho<e^{-1}\ \ \mbox{and all}\ \  M>1.
\]
Therefore,
\[
 \sum_{i\le \varrho M} {M \choose i} e^{-\eps K(\varrho M)/2} \le \exp[-\eps K (\varrho M)/4] \ \ \mbox{for}\ \ K \ge \frac{4C_2}{\eps} \log(1/\varrho).
\]
Now the Borel-Cantelli Lemma implies that the event $\Wk(M,\varrho, L_1 \log(1/\varrho))$ does not occur for all $M$ sufficiently large, $\PP$-almost surely, with
\[
L_1:= \eps^{-1} \max\{4C_2,2\log C_1\},
\]
and this is precisely the claim of Lemma~\ref{lem:proba}.
\end{proof}

\subsection{Proof of Theorem~\ref{thm:Fourier-precise}}

We are now ready to start the proof of Theorem~\ref{thm:Fourier-precise}.

\begin{proof}[Proof of Theorem~\ref{thm:Fourier-precise}] Let $\Sigma$ be a non-degenerate model as in the second part of the statement.                                                                                                                                                                                                                                                                                                                                                                                                                                                                                                                                                                                                                                                                                                                                                                                                                                                                                                                                                                                                                                                                                                                                                                                                                                                                                                                                                                                                                                                                                                                                                                                                                                                                                                                                                                                                                                                                                                                                                                                                                                                                                                                                                                                                                                                                                                                                                                                                                                                                                                                                                                                                                                                                                                                                                                                                                                                                                                                                                                                                                                                                                                                                                                                                                                                                                                                                                                                                                                                                                                                                                                                                                                                                                                                                                                                                                                                                                                                                                                                                                                                                                                                                                                                                                                                                                                                                                                                                                                                                                                                                                                                                                                                                                                                                                                                                                                                                                                                                                                                                                                                                                                                                                                                                                                                                                                                                                                                                                                                                                                                                                                                                                                                                                                                                    Without loss of generality, we may assume that $k_1\ge 2$, $t_1^{(1)}\neq t_2^{(1)}$. Since translating and scaling a measure does not change whether the measure is in $\mathcal{D}_1(\sigma)$, we may assume also that $t_1^{(1)}=0$, and $t_2^{(1)}=1$. Also, the statement of the proposition remains equivalent if we simultaneously replace $\beta_i$ by $c\beta_i$, and $[a,b]$ by $[a^{1/c}, b^{1/c}]$, for some $c>0$. Thus, we can assume without loss of generality that $\beta_1=1$. We may also assume that $N\ge 2$, otherwise this is a standard homogeneous self-similar measure and the claim follows e.g. from \cite[Proposition 2.3]{Shmerkin14}.

We write $\eta^\pom_\lam$ for the measures generated by the model $\Sigma$; we keep in mind that there is also a dependence on the probabilities and the translations. We write $\lam_i$ for the contraction ratio of the IFS corresponding to the symbol $i$, and recall that  $\lam_i = \lam^{\beta_i}$ by assumption. By definition, $\eta^{(\om)}_\lam$ is the distribution of a sum of independent random variables, hence it can be represented as an infinite convolution measure
\[
\eta^{(\om)}_\lam = \Conv_{n\in\N} \left(\sum_{j=1}^{k_{\om_n}} p_j ^{(\om_n)} \delta_{\lam_{\om_1}\cdots \lam_{\om_{n-1}} t_j^{(\om_n)}}\right),
\]
where $\delta_x$ is Dirac's delta centered at $x$.
We thus have
\begin{eqnarray}
|\widehat{\eta_\lam^{(\om)}}(\xi)| & = & \prod_{n\in \N} \left| \sum_{j=1}^{k_{\om_n}} p_j ^{(\om_n)} e^{\pi i \lam_{\om_1}\cdots \lam_{\om_{n-1}} t_j^{(\om_n)}\xi}\right| \nonumber \\
                                         & \le & \prod_{n:\,\om_n=1} \left( \left| p_1^{(1)} + p_2^{(1)} e^{\pi i \lam_{\om_1}\cdots \lam_{\om_{n-1}}  \xi}\right| + \Bigl(1 - p_1^{(1)}-p_2^{(1)}\Bigr)\right) \nonumber \\
                                         & \le & \prod_{n:\,\om_n=1} \left( 1 - c_0 \|\lam_{\om_1}\cdots \lam_{\om_{n-1}} \xi\|^2\right), \label{ineq1}
\end{eqnarray}
where $\|x\|$ denotes the distance from $x$ to the nearest integer and $c_0>0$ is a constant depending only on $[p_{\min},p_{\max}]$.

We will impose a number of conditions on $\om$ which hold $\PP$-a.e. First of all, clearly a.e.\ $\om$ contains infinitely many 1's.
Let \[
\om = W_1 W_2 W_3\ldots,\ \ \mbox{with}\ \ W_i=W'_i1,
\]
where $W'_i$ are words in the alphabet $\{1,\ldots,N\}$ not containing $1$'s (the $W'_i$ may be empty).

For a finite word $v=v_1\ldots v_n$ let $\lam_v = \lam_{v_1\ldots v_n}:= \lam_{v_1}\cdots \lam_{v_n}$. Then (\ref{ineq1}) can be rewritten as
\be \label{ineq2}
|\widehat{\eta_\lam^{(\om)}}(\xi)| \le \prod_{k=1}^\infty \left(1- c_0 \|\lam_{W_1\ldots W_{k-1} W'_k}\cdot \xi\|^2\right),
\ee
with the convention that the word equals $W'_1$ when $k=1$.

Denote $\theta=\lam^{-1}=\lam^{-\beta_1}$, $\theta_i = \lam_i^{-1}=\lam^{-\beta_i}$ and $\theta_v = \lam_v^{-1}$, and let
\[
\Theta^{(k)}:= (\lam_{W_1\ldots W_{k-1} W'_k})^{-1} = \theta_{W_1}\cdots \theta_{W_{k-1}} \theta_{W'_k}.
\]
Now suppose $\xi \in [\Theta^{(M)}, \Theta^{(M+1)}]$ for some $M\in \N$ (this depends on the $\lam_i$, and we will keep this in mind), and let $\tau = \xi/\Theta^{(M)}$.
Let
\[
\Theta^{(M)}_{k}:= \frac{\Theta^{(M)}}{\Theta^{(M-k)}},\ \ k=0,\ldots,M-1;
\]
then (\ref{ineq2}) yields
\be \label{ineq3}
|\widehat{\eta_\lam^{(\om)}}(\xi)| =  |\widehat{\eta_\lam^{(\om)}}(\Theta^{(M)}\cdot \tau)| \le \prod_{k=0}^{M-1} \bigl( 1 - c_0 \|\Theta_k^{(M)} \tau\|^2\bigr).
\ee
Note that $\tau \in [1, \Theta_1^{(M+1)}]$.

The following proposition captures the key combinatorial estimate at the heart of  Theorem~\ref{thm:Fourier-precise}. It is a typical ``Erd\H{o}s-Kahane'' estimate, although we need to take extra care in our setting due to the fact that we deal not with a deterministic parametrized family, but with a random collection of parametrized families. We should note that the rest of the proof has many common features with another version of the Erd\H{o}s-Kahane argument in a random setting, which appeared in \cite[Section 10]{BufetovSolomyak15}.

\begin{proposition} \label{prop:Four} Fix a compact set $H$ as in the statement of Theorem~\ref{thm:Fourier-precise} and $\alpha>0$.
There exist positive constants $\rho = \rho(H,\alpha)$ and $\delta=\delta(H,\alpha)$  such that, for $\PP$-a.e.\ $\om$, there exists $M_0=M_0(\om,H)$ such that for any $M\ge M_0$ and  any $\beta_1,\ldots,\beta_N \in [\beta_{\min},\beta_{\max}]$, the set
\be \label{badset}
E_{\rho,\delta,M}:=\left\{\theta \in [a,b]:\ \max_{\tau \in [1, \Theta_1^{(M+1)}]} \frac{1}{M} \bigl| \{k\in \{0,\ldots,M-1\}:\,\|\Theta_k^{(M)} \tau\| \ge \rho\}\bigr| < \delta\right\}
\ee
can be covered by $O_H(a^{\alpha M/2})$ balls of radius $O_H(a^{-M/2})$. Note that $E_{\rho,\delta,M}$ is a random set, depending on $\om$.
\end{proposition}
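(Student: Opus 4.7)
The plan is to adapt the classical Erd\H{o}s--Kahane digit-counting method to our random setting, where the consecutive ratios $\Theta_{k+1}^{(M)}/\Theta_k^{(M)}=\theta^{a_k}$ have $\omega$-dependent exponents $a_k$ (a $\beta$-weighted waiting time, with typical value $O(1)$ but occasional large excursions driven by the $X_n$'s). After the rescaling described in the paragraph preceding the proposition, I may assume $\beta_1=1$. Fix $\omega$ in the full-$\PP$-measure set provided by Lemma~\ref{lem:proba} together with the law of large numbers $X_n/n\to 0$. For each $\theta\in E_{\rho,\delta,M}$, choose a witness $\tau\in[1,\Theta_1^{(M+1)}]$ and define
\[
m_k := \lfloor \Theta_k^{(M)}\tau\rceil\in\mathbb{Z},\qquad \varepsilon_k := \Theta_k^{(M)}\tau - m_k \in \bigl[-\tfrac12,\tfrac12\bigr],
\]
so that the $\rho$-good set $G:=\{k : |\varepsilon_k|<\rho\}$ satisfies $|G|\ge(1-\delta)M$.

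The key recursion $m_{k+1}=\theta^{a_k}m_k+\theta^{a_k}\varepsilon_k-\varepsilon_{k+1}$ shows that, given $m_k$ and $\theta$, the admissible integers $m_{k+1}$ number at most $O(\rho\theta^{a_k}+1)$ when $\{k,k+1\}\subseteq G$, and at most $O(\theta^{a_k}+1)$ in general. I would call an index $k$ \emph{exceptional} if either (B) one of $k,k+1$ lies outside $G$, or (R) $a_k\ge L$, where $L=L(\rho)$ is fixed by $\rho b^L\le 1/2$; then every non-exceptional index forces $m_{k+1}$ to be \emph{uniquely} determined (as the nearest integer to $\theta^{a_k}m_k$), while each exceptional index contributes at most $O(\theta^{a_k})$ to the count. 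Combined with the initial bound $m_0\in[0,\Theta_1^{(M+1)}]$ giving $O(\Theta_1^{(M+1)}) = a^{o(M)}$ possibilities (since $X_{M+1}=o(M)$ on our full-measure set), this bounds the total number of admissible sequences $(m_k)_{k=0}^{M-1}$ by
\[
a^{o(M)}\,\cdot\,\theta^{\sum_{k\,\text{exc.}}a_k}.
\]

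Lemma~\ref{lem:proba} then controls the exponent: the set (B) has size $\le \delta M$ by the definition of $E_{\rho,\delta,M}$, and by Markov's inequality combined with the exponential moment~\eqref{expo} the set (R) has size $\le \varrho M$ for some $\varrho=\varrho(L)\downarrow 0$ as $L\to\infty$. The lemma yields $\sum_{k\,\text{exc.}} a_k \le C\beta_{\max}\bigl(\delta\log\tfrac{1}{\delta}+\varrho\log\tfrac{1}{\varrho}\bigr)M$ for all $M\ge M_0(\omega,H)$. Choosing $\rho$ small first (which simultaneously makes $L$ large and $\varrho$ small), and then $\delta$ small, depending only on $H$ and $\alpha$, arranges $\log\theta\cdot\sum_{k\,\text{exc.}} a_k \le (\alpha/3)M\log a$ uniformly in $\theta\in[a,b]$, which bounds the sequence count by $O_H(a^{\alpha M/2})$ after absorbing the subexponential $a^{o(M)}$ factor. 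Finally, each admissible sequence localizes $\theta$ in an interval of radius $O_H(a^{-M/2})$: from the single relation $m_{M-1}=\theta^{b_{M-1}}\tau+O(1)$ with $b_{M-1}\ge M-O(1)$ (thanks to $\beta_1=1$ and one occurrence of the symbol $1$ in every block $W_j$) and $\tau\ge 1$, implicit differentiation yields $|d\theta| = O(\theta^{-b_{M-1}}/\tau) = O(a^{-M}) \subset O(a^{-M/2})$.

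The main obstacle is the careful orchestration of the parameter choices $\rho,\delta,L$ in the right order, so as to simultaneously make every good non-exceptional step genuinely unique ($\rho b^L\le 1/2$), keep the exceptional contribution $\sum_{k\,\text{exc.}} a_k$ small enough for the counting bound, and absorb the supremum over $\tau$ into the coarse bound on $m_0$. A further subtlety is uniformity in $\beta_1,\ldots,\beta_N\in[\beta_{\min},\beta_{\max}]$: since all constants involved are continuous in the $\beta_i$, compactness of $[\beta_{\min},\beta_{\max}]^N$ makes this uniformity routine, though it must be tracked throughout.
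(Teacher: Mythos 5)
There is a genuine gap at the heart of your counting step, and it is precisely the point that the Erd\H{o}s--Kahane method is designed to overcome. You enumerate the admissible digit sequences $(m_k)$ by bounding the number of choices of $m_{k+1}$ \emph{given $m_k$ and $\theta$}, and you declare $m_{k+1}$ ``uniquely determined (as the nearest integer to $\theta^{a_k}m_k$)'' at non-exceptional steps. But $\theta$ is the unknown parameter that the sequences are supposed to localize: the covering of $E_{\rho,\delta,M}$ consists of one small interval per admissible sequence, so the enumeration of sequences must be carried out \emph{without} knowledge of $\theta$. If you only know $m_k$ and let $\theta$ range over $[a,b]$, then $\theta^{a_k}m_k$ sweeps an interval of length $\asymp m_k\asymp a^k$, so the number of candidate values of $m_{k+1}$ is exponentially large and your product bound $a^{o(M)}\cdot\theta^{\sum_{\mathrm{exc}}a_k}$ does not follow. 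The paper's proof resolves exactly this by passing to a second-order recursion: from \eqref{equ31} the ratio $K_{n+1}/K_n$ is itself an approximation to $\theta^{\beta(W_{M-n})}$ accurate to $O(1/K_n)$, so $K_{n+2}$ is predicted from $K_{n+1}\bigl(K_{n+1}/K_n\bigr)^{\beta(W_{M-n-1})/\beta(W_{M-n})}$ up to an error controlled by \eqref{equ7}; this is what makes the branching bound of Lemma~\ref{lem-step} \emph{independent of $\theta$ and $\tau$}, at the price of needing three consecutive small $\eps$'s (hence the $\wtil{E}_{4\delta,M}\supset E_{\rho,\delta,M}$ bookkeeping) and of counting the two ``beginnings'' $K_0,K_1$ rather than just $m_0$.

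The same oversight reappears in your final localization: you derive $|d\theta|=O(\theta^{-b_{M-1}}/\tau)$ from $m_{M-1}=\theta^{b_{M-1}}\tau+O(1)$ treating $\tau$ as known, but $\tau$ is a free continuous parameter in $[1,\Theta_1^{(M+1)}]$, pinned down by $m_0=\lfloor\tau\rceil$ only to within $1/2$; propagating that uncertainty gives an interval for $\theta$ of length $\asymp 1/M$, not $a^{-M/2}$. One must again take a ratio of consecutive digits to eliminate $\tau$, as in \eqref{last1}, and then choose an index $n\in[M/2-2,M-2]$ at which $|W_{M-n}|$ is bounded --- which is why the radius is $O_H(a^{-M/2})$ rather than $O_H(a^{-M})$. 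Your peripheral devices (declaring indices with $a_k\ge L$ exceptional instead of using variable thresholds $\rho_n$, and invoking Lemma~\ref{lem:proba} to control $\sum_{\mathrm{exc}}a_k$) are sound and essentially equivalent to the paper's, but the argument does not close without the $\theta$- and $\tau$-elimination described above.
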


First we derive Theorem~\ref{thm:Fourier-precise} from Proposition \ref{prop:Four}. Let $\Omega_1$ be the full measure set from the proposition, and let
\[
\mathcal{G}_{H,\alpha} = \left\{ (\om,\lam):\lam^{-1}\notin   \limsup_{M \rightarrow +\infty} E_{\rho,\delta,M} \right\}.
\]
It is easy to see that $\mathcal{G}_{H,\alpha}$ is Borel: the numbers $\Theta_k^{(M)}$ depend on $\om$ and $\lam$ in a Borel manner, and because $\|\cdot\|$ is continuous, the set \eqref{badset} can be defined in terms of maxima over rational $\tau$, rather than all $\tau$.

Now fix $\om\in\Omega_1$. Define the exceptional set $E=E(\om)= \{ \lam: (\om,\lam)\notin \mathcal{G}_{H,\alpha}\}$. Explicitly,
\[
E^{-1} :=\{\lambda^{-1}: \lambda \in E\} = \limsup_{M \rightarrow +\infty} E_{\rho,\delta,M}.
\]
By Proposition \ref{prop:Four} we have  $\dim_H(E)=\dim_H(E^{-1}) \le \alpha$.

On the other hand, for $\lam\not\in E$ we have that $\lam^{-1}\not\in  E_{\rho,\delta,M}$ for all $M\ge M_1=M_1(\om,\lam)$.
Then for $\xi\ge \Theta^{(M_1)}$ choose $M\ge M_1$ such that $\xi \in [\Theta^{(M)}, \Theta^{(M+1)}]$ and obtain from (\ref{ineq3}) that
\be \label{ineq4}
|\widehat{\eta_\lam^{(\om)}}(\xi)| \le (1-c_0 \rho^2)^{\delta M} .
\ee
By the Law of Large Numbers there exists $c_1>0$ such that for $\PP$-a.e.\ $\om$,
\be \label{Birk}
\left|\{n\in\{1,\ldots,L\}:\,\om_n = 1\}\right| \ge c_1L
\ee
for $L$ sufficiently large (in fact, we can take any $c_1< \PP(\om_1=1)$). Hence, for $M$ sufficiently large we have
\[
|W_1\ldots W_{M+1}| \le c_1^{-1}(M+1).
\]
Then $\xi \le \Theta^{(M+1)} \le b^{\beta_{\max}c_1^{-1}(M+1)}$, where $\beta_{\max}=\max\{\beta_i:i\in I\}$, and (\ref{ineq4}) yields
\[
|\widehat{\eta_\lam^{(\om)}}(\xi)|  = O_\om(|\xi|^{-\sigma}),\ \ \mbox{where}\ \ \sigma = -\delta \beta_{\max}^{-1} \cdot c_1 \log(1-c_0 \rho^2)/\log b,
\]
as desired. Note that $\sig$ depends on $H$, but not on $\om$ --- just on the model.
\end{proof}

\begin{proof}[Proof of Proposition~\ref{prop:Four}]
Fix $M\in\N$. For $n=0, \ldots, M-1$, let
\be \label{eqa1}
\Theta^{(M)}_n \tau = K_n^{(M)} + \eps_n^{(M)},\ \ \ \mbox{where}\ \ K_n^{(M)}\in \N,\ \ |\eps_n^{(M)}|\le 1/2,
\ee
so that $\|\Theta^{(M)}_n \tau \|=\eps_n^{(M)}$. Note that $\tau\ge 1$ and $\Theta^{(M)}_n\ge \theta^n \ge a^n$, so
\be \label{Kbound}
K_n^{(M)}\ge \max\{1, a^n-1\}.
\ee
We will next drop the superscript $(M)$ from the notation of $K_n$ and $\eps_n$, but will keep the dependence in mind. These numbers also depend on $\lam_i$ and $\tau$, of course.
All the constants implicit in the $O(\cdot)$ notation below are allowed to depend on $H$.

It follows from (\ref{eqa1}) that
\be \label{eqa2}
\frac{\Theta_{n+1}^{(M)}}{\Theta_n^{(M)}} - \frac{K_{n +1}}{K_n} = \frac{\eps_{n+1}}{K_n} - \frac{\Theta_{n+1}^{(M)}}{\Theta_n^{(M)}}\frac{\eps_n}{K_n}\,.
\ee
Observe that
\[
\frac{\Theta_{n+1}^{(M)}}{\Theta_n^{(M)}}= \theta_{W_{M-n}} = \theta^{\beta(W_{M-n})},\ \ \ \mbox{where}\ \ \beta(v_1\ldots v_\ell):= \sum_{j=1}^{\ell} \beta_{v\_j}.
\]
We obtain from (\ref{eqa2}):
\be \label{equ31}
\left|\theta^{\beta(W_{M-n})} - \frac{K_{n+1}}{K_n}\right| \le\frac{\theta^{\beta(W_{M-n})} |\eps_n| + |\eps_{n+1}|}{K_n}\,.
\ee
Replacing $n$ by $n+1$ gives
\[
\left|\theta^{\beta(W_{M-(n+1)})} - \frac{K_{n+2}}{K_{n+1}}\right| \le\frac{\theta^{\beta(W_{M-(n+1)})} |\eps_{n+1}| + |\eps_{n+2}|}{K_{n+1}}\,.
\]
It is important that $\beta(W_j)$ does not depend on $\theta$, but only on the sequence $\om$.
Using the inequality
\[
|x^s-y^s| \le |x-y|,\ \  \mbox{for}\ x,y>0,\ s\in (0,1],
\]
we obtain from (\ref{equ31}):
\be \label{equ4}
\left|\theta - \Bigl(\frac{K_{n+1}}{K_n}\Bigr)^{\beta(W_{M-n})^{-1}}\right| \le\frac{\theta^{\beta(W_{M-n})} |\eps_n| + |\eps_{n+1}|}{K_n}\,,
\ee
since $W_{M-n}$ contains a 1 and $\beta_1 = 1$ by assumption.
Similarly,
\be \label{equ41}
\left|\theta- \Bigl(\frac{K_{n+2}}{K_{n+1}}\Bigr)^{\beta(W_{M-(n+1)})^{-1}} \right| \le\frac{\theta^{\beta(W_{M-(n+1)})} |\eps_{n+1}| + |\eps_{n+2}|}{K_{n+1}}\,.
\ee
From (\ref{equ31}), using trivial bounds, we obtain
\be \label{eq-triv1}
\frac{K_{j+1}}{K_{j}} \le \theta^{\beta(W_{M-j})} +  \frac{\theta^{\beta(W_{M-j})}+1}{2K_{j}}\le 2 \theta^{\beta(W_{M-j})} \le (2b)^{\beta(W_{M-j})},
\ee
for $j=n,n+1$. Combining  (\ref{equ4}) and (\ref{equ41}) and using the inequality
\[
|x^s - y^s| \le s \max(x^{s-1}, y^{s-1})\cdot |x-y|, \ \ \mbox{for}\ x,y>0,\ s \ge 1,
\]
 with $s = \beta(W_{M-n-1})$, yields, after a little calculation using (\ref{eq-triv1}),
 \begin{eqnarray} \label{equ5}
& & \left| \frac{K_{n+2}}{K_{n+1}} - \Bigl(\frac{K_{n+1}}{K_{n}}\Bigr)^{\frac{\beta(W_{M-n-1})}{\beta(W_{M-n})}}\right|  \\[1.1ex]
& \le & L_n \left[ \frac{\theta^{\beta(W_{M-n})} |\eps_n| + |\eps_{n+1}|}{K_n}+ \frac{\theta^{\beta(W_{M-n-1})} |\eps_{n+1}| + |\eps_{n+2}|}{K_{n+1}}\right], \nonumber
\end{eqnarray}
where
\begin{align*}
L_n &= \beta(W_{M-n-1}) (2 b)^{\beta(W'_{M-n-1})} \\
&= (1+\beta(W'_{M-n-1}))(2 b)^{\beta(W'_{M-n-1})} \le (2eb)^{\beta(W'_{M-n-1})}.
\end{align*}
Now, (\ref{equ5}) and then (\ref{eq-triv1})  imply
\begin{eqnarray}
& & \left| K_{n+2}-K_{n+1} \Bigl(\frac{K_{n+1}}{K_{n}}\Bigr)^{\frac{\beta(W_{M-n-1)})}{\beta(W_{M-n})}}\right| \nonumber \\
& \le & L_n\left[ \frac{K_{n+1}}{K_n} \left(\theta^{\beta(W_{M-n})} |\eps_n| + |\eps_{n+1}|\right) + \theta^{\beta(W_{M-n-1})} |\eps_{n+1}| + |\eps_{n+2}|\right] \nonumber \\[1.1ex]
& \le & L_n\left[ (2b)^{\beta(W_{M-n})}(b^{\beta(W_{M-n})}+1) + (b^{\beta(W_{M-n-1})}+1)\right] \cdot \max(|\eps_{n}|,|\eps_{n+1}|,|\eps_{n+2}|) \nonumber \\[1.1ex]
& \le &  2(2eb)^{\beta(W'_{M-n-1})} \left[(2b^2)^{\beta(W_{M-n})} + b^{\beta(W_{M-n-1})}\right] \cdot \max(|\eps_{n}|,|\eps_{n+1}|,|\eps_{n+2}|) \nonumber \\[1.1ex]
&\label{equ7} \le & 4(12b^3)^{\beta_{\max}(|W_{M-n}| + |W_{M-n-1}|)} \cdot \max(|\eps_{n}|,|\eps_{n+1}|,|\eps_{n+2}|),
\end{eqnarray}
using crude bounds in the last two steps.
Let
\be \label{def-Brho}
B_n:= 4(12b^3)^{\beta_{\max}(|W_{M-n}| + |W_{M-n-1}|)}, \ \ \rho_n = (2B_n)^{-1}.
\ee
Note that these numbers depend on $\om$ and $M$, and recall that the $K_n$ depend, additionally, on $\theta$ and $\tau$.

\begin{lemma} \label{lem-step}
Fix $M\in\N$ and $\om$ such that $\om$ has infinitely many $1$s. Then

{\rm (i)} given $K_n, K_{n+1}$, there are at most $2B_n+1$ possibilities for $K_{n+2}$, independent of $\theta\in [a,b]$ and $\tau\ge 1$;

{\rm (ii)} if $\max(|\eps_{n+2}|,|\eps_{n+1}|,|\eps_{n}|)<\rho_n$, then $K_{n+2}$ is uniquely determined by $K_n$ and $K_{n+1}$, independent of $\theta\in [a,b]$ and $\tau\ge 1$.
\end{lemma}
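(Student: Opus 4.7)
The plan is to read off both statements directly from the already-established inequality \eqref{equ7}, whose whole point is that the right-hand side of the approximation for $K_{n+2}$ is a quantity independent of $\theta$ and $\tau$. Concretely, rewrite \eqref{equ7} as
\[
\left| K_{n+2} - A(K_n,K_{n+1}) \right| \;\le\; B_n \cdot \max(|\eps_{n}|,|\eps_{n+1}|,|\eps_{n+2}|),
\]
where
\[
A(K_n,K_{n+1}) \;:=\; K_{n+1}\Bigl(\tfrac{K_{n+1}}{K_{n}}\Bigr)^{\beta(W_{M-n-1})/\beta(W_{M-n})}.
\]
The crucial observation is that $A(K_n,K_{n+1})$ depends only on $K_n$, $K_{n+1}$ and on $\beta(W_{M-n-1}),\beta(W_{M-n})$, which in turn depend only on $\om$ and $M$ (not on the parameters $\theta$ or $\tau$, nor on any of the $\beta_i$'s other than through the integers $\beta(W_j)$ derived from the word decomposition; and recall $\beta_1=1$ was normalized). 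Hence $A(K_n,K_{n+1})$ is a single real number, fixed once $\om$, $M$, $K_n$, $K_{n+1}$ are chosen.

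For part (i), use the universal a priori bound $|\eps_j|\le 1/2$ from \eqref{eqa1}, which yields
\[
|K_{n+2}-A(K_n,K_{n+1})| \;\le\; B_n/2.
\]
Thus $K_{n+2}$ is an integer lying in the interval $[A-B_n/2,\,A+B_n/2]$ of length $B_n$, which contains at most $B_n+1\le 2B_n+1$ integers. This gives the uniform bound on the number of admissible successors.

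For part (ii), strengthen the bound: under the hypothesis $\max(|\eps_n|,|\eps_{n+1}|,|\eps_{n+2}|)<\rho_n=(2B_n)^{-1}$, we obtain
\[
|K_{n+2}-A(K_n,K_{n+1})| \;<\; B_n\cdot \frac{1}{2B_n} \;=\; \frac{1}{2},
\]
so $K_{n+2}$ is the unique integer in the open interval $(A-1/2,A+1/2)$, i.e. the nearest integer to $A(K_n,K_{n+1})$. Since $A$ is $\theta$- and $\tau$-independent, $K_{n+2}$ is determined by $K_n,K_{n+1}$ alone.

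There is really no obstacle here beyond verifying that the quantity $A(K_n,K_{n+1})$ truly contains no hidden $\theta$- or $\tau$-dependence; this was already arranged in the derivation of \eqref{equ7}, where the key step $|x^s-y^s|\le |x-y|$ (for $s\in(0,1]$) was used precisely to eliminate $\theta$ from the comparison. So the lemma is essentially a packaging of \eqref{equ7} into a form suitable for the Erd\H{o}s--Kahane counting argument that will follow.
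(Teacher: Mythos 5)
Your proof is correct and is exactly the paper's argument (the paper dispatches the lemma in one line: the claims are immediate from \eqref{equ7} together with the integrality of $K_{n+2}$, which is precisely what you spell out). One immaterial slip: the quantities $\beta(W_j)$ are sums of the real numbers $\beta_i$, not integers, but all that matters --- and all you actually use --- is that they are determined by $\om$ and $M$ alone, independently of $\theta$ and $\tau$.
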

The claims of the lemma are immediate from (\ref{equ7}), using the fact that $K_{n+2}$ is an integer.

Let $\wtil{E}_{\delta,M}$ be defined by
\begin{eqnarray*}
\wtil{E}_{\delta,M} & := & \Bigl\{\theta\in [a,b]:\ \mbox{there exists}\ \tau \in [1, \Theta_1^{(M+1)}]\ \ \mbox{for which}\\
                              &     &  \bigl|\{n\in \{0,\ldots,M-3\}:\ \max\{\|\Theta_n^{(M)}\tau\|, \|\Theta_{n+1}^{(M)}\tau\|, \|\Theta_{n+2}^{(M)}\tau\| \}\ge \rho_n\}\bigr| < \delta M\Bigr\},
\end{eqnarray*}
where $\rho_n$ is the one from (\ref{def-Brho}).
First we claim that $\PP$-almost surely,
\be \label{inclu}
\wtil{E}_{4\delta,M} \supset E_{\rho,\delta,M}
\ee
for $M$ sufficiently large (depending on $\om$ only), with
\[
\rho = (1/8)\cdot (12b^3)^{-4\beta_{\max}/(c_1\delta)},
\]
where $c_1>0$ is the constant from (\ref{Birk}). Recall that
\begin{eqnarray*}
{E}_{\rho, \delta,M} & = & \Bigl\{\theta\in [a,b]:\ \mbox{there exists}\ \tau \in [1, \Theta_1^{(M+1)}]\ \ \mbox{for which}\\
                              &     &  \bigl|\{n\in \{0,\ldots,M-1\}:\ \|\Theta_n^{(M)}\tau\| \ge \rho\}\bigr| < \delta M\Bigr\}.
\end{eqnarray*}
Observe that there are fewer than $\delta M/2$ integers $n\in \{1,\ldots,M-1\}$ for which $|W_{M-n}| >2/(c_1\delta)$, for $M$ sufficiently large, by (\ref{Birk}), hence
there are fewer than $\delta M$ integers $n\in \{1,\ldots,M-1\}$ for which $|W_{M-n}| + |W_{M-n-1}| >4/(c_1\delta)$, for $M$ sufficiently large, hence fewer than $\delta M$ integers $n\in \{1,\ldots,M-1\}$ for which
$\rho_n \le \rho$, in view of (\ref{def-Brho}).
Suppose that $\theta\not\in \wtil{E}_{4\delta,M}$. Then
for all $\tau \in [1, \Theta_1^{(M+1)}]$,
\[
\bigl|\{n\in \{0,\ldots,M-3\}:\ \max\{\|\Theta_n^{(M)}\tau\|, \|\Theta_{n+1}^{(M)}\tau\|, \|\Theta_{n+2}^{(M)}\tau\| \} \ge \rho\}\bigr| \ge 3\delta M,
\]
and therefore,
\[
\bigl|\{n\in \{0,\ldots,M-1\}:\ \|\Theta_n^{(M)}\tau\| \ge \rho\}\bigr| \ge \delta M,
\]
showing that $\theta\not\in E_{\rho,\delta,M}$ and
confirming the claim (\ref{inclu}).

It remains to estimate the number of balls of radius $O_H(a^{-M/2})$ needed to cover $\wtil{E}_{4\delta,M}$. Suppose that $\theta\in \wtil{E}_{4\delta,M}$; choose appropriate $\tau$ from the definition of $\wtil{E}$, and
find the corresponding $K_n,\eps_n$ from (\ref{eqa1}). We have from (\ref{Kbound}) and (\ref{equ4}) that for $n=1,\ldots,M-2$,
\be \label{last1}
\left|\theta - \Bigl(\frac{K_{n+1}}{K_n}\Bigr)^{\beta(W_{M-n})^{-1}}\right| \le\frac{\theta^{\beta(W_{M-n})} |\eps_n| + |\eps_{n+1}|}{K_n} \le \frac{b^{\beta_{\max}|W_{M-n}|}+1}{a^n-1},
\ee
Again using (\ref{Birk}), we note that for $M$ sufficiently large
\[
\min\{|W_{M-n}|:\ n\in [M/2-2, M-2] \}\le 2c_1^{-1},
\]
hence we can find $n\in [M/2-2, M-2]$ for which the right-hand side of (\ref{last1}) is $O_H(a^{-M/2})$. Thus (\ref{last1}) provides an interval of size $O_H(a^{-M/2})$ to cover $\theta$. Since the center of this interval is determined by $K_n$ and $K_{n+1}$ (recall that $W'_{M-n}$ are fixed in advance by $\om$), it suffices to estimate the number of sequences $K_0,\ldots,K_{M-1}$ which may arise this way. Let $\Psi_M$ be the set of $n\in \{0,\ldots,M-3\}$ where we have $\max\{|\eps_n|, |\eps_{n+1}|, |\eps_{n+2}|\}\ge \rho_n$. By the definition of $\wtil{E}_{4\delta,M}$ we have $|\Psi_M|\le 4\delta M$. By Lemma~\ref{lem-step}, for a fixed $\Psi_M$ the number of possible
sequences $K_0,\ldots,K_{M-1}$ is at most
\[
\Bk_M:= \prod_{n\in \Psi_M} (2B_n+1),
\]
times the number of ``beginnings'' $K_0, K_1$. The number of choices for $K_0, K_1$ is bounded above by
\[
\Theta_1^{(M+1)}\cdot (\Theta_1^{(M+1)}\Theta_1^{(M)}) \le b^{\beta_{\max}(2|W_{M+1}| + |W_M|)},
\]
by (\ref{eqa1}), in view of the assumption $\tau \in [1,\Theta_1^{(M+1)}]$.
Let $\wtil{\varrho}=\wtil{\varrho}(H,\alpha)>0$ be such that
\[
b^{3\wtil{\varrho}\beta_{\max}} < a^{\alpha/4}.
\]
Then for $\PP$-a.e.\ $\om$, for $M$ sufficiently large, the number of choices for $K_0, K_1$ is bounded above by $a^{\alpha M/4}$, in view of (\ref{cond1}). It remains to estimate $\Bk_M$. In view of (\ref{cond2}) in Lemma \ref{lem:proba} and (\ref{def-Brho}),
\[
\Bk_M \le \exp \Bigl(O_H(1)\cdot  \sum_{n\in \Psi_M} (|W_{M-n}| + |W_{M-n-1}|)\Bigr) \le \exp\bigl(O_H(1)\cdot L_1\cdot \log(1/4\delta)\cdot \delta M\bigr),
\]
for $M$ sufficiently large. Combining everything, we obtain that the number of balls of radius $O_H(a^{-M/2})$ needed to cover $\wtil{E}_{4\delta,M}$ is not greater than
\[
a^{\alpha M/4}\cdot \sum_{i\le 4\delta M} {M\choose i}\cdot \exp\bigl(O_H(1)\cdot L_1\cdot \log(1/4\delta)\cdot \delta M\bigr),
\]
which can be made smaller than $a^{\alpha M/2}$ by taking appropriate $\delta=\delta(H,\alpha)>0$. This concludes the proof of Proposition~\ref{prop:Four}, and now Theorem~\ref{thm:Fourier-precise} is proved completely.
\end{proof}

\section{Absolute continuity of self-similar measures}
\label{sec:abs-cont}

In this section we prove Theorem~\ref{thm:abs-cont-non-hom}. Fix translations $t_1<\ldots<t_k$ and $\mathbf{p}$ and let
\[
\mathcal{G}= \{ \lam\in (0,1)^k: \nu_{\lam,t}^{\mathbf{p}} \text{ is absolutely continuous}\}.
\]
Recall that for any $\lam,\mathbf{p}$, the measure $\nu_{\lam,t}^\mathbf{p}$, being self-similar, is either absolutely continuous or purely singular.  It follows from \cite[Theorem 1.10]{SimonVago17} that $\mathcal{G}$ is an $F_\sigma$ set. 

Fix positive numbers $1=\beta_1,\ldots,\beta_k$, and write $\nu_\lam^{\mathbf{p}}=\nu_{(\lam^{\beta_1},\ldots,\lam^{\beta_k}),t}^{\mathbf{p}}$ and (with a slight abuse of notation) also $s(\lam,\mathbf{p})=s((\lam^{\beta_1},\ldots,\lam^{\beta_k}),\mathbf{p})$.  By the coarea formula (see e.g. \cite[Theorem 3.10]{EvansGariepy15}), applied to the map
\[
(\lam_1,\ldots,\lam_k)\mapsto (\log(\lam_2)/\log(\lam_1),\ldots,\log(\lam_k)/\log(\lam_1))
\]
and the measurable function $\mathbf{1}_\mathcal{G}$, in order to establish Theorem~\ref{thm:abs-cont-non-hom} it is enough to prove the following:

\begin{proposition} \label{prop:non-hom}
Given $\eps>0$, there exists a null set $E_{\mathbf{p}} \subset (0,1)$ (depending on the $t_i$, $\beta_i$ and $\mathbf{p}$) such that if $\lam\in (0,1)\setminus E_{\mathbf{p}}$ is such that $s(\lam,\mathbf{p})>1+\eps$, then $\nu_\lam^{\mathbf{p}}$ is absolutely continuous.
\end{proposition}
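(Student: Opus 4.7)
The plan is to implement the strategy outlined in Section~\ref{subsec:outline-of-proof} for arbitrary $k$. Fix the curve $\lambda\mapsto(\lambda^{\beta_1},\dots,\lambda^{\beta_k})$ and write $\nu_\lambda^{\mathbf{p}}$ for the associated self-similar measure. Choose two large integers $r,s\in\N$ (the blocking and thinning parameters, to be determined later). Group $\{1,\dots,k\}^r$ by composition type: let $I$ be the quotient, and for $i\in I$ note that every word $u$ of type $i$ yields a composition $f_{u_1}\circ\cdots\circ f_{u_r}$ with common contraction ratio $\lambda^{\beta(i)}$, where $\beta(i):=\sum_{j=1}^r \beta_{u_j}$. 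Form the homogeneous IFS $\Phi^{(i)}$ out of these compositions, equip it with the probability vector $p^{(i)}_u := p_u/q_i$ (where $q_i=\sum_{u\in i}p_u$ and $p_u = \prod_j p_{u_j}$), and take $\PP$ to be the Bernoulli measure on $\Omega=I^\N$ with marginal $(q_i)_{i\in I}$. This defines a model $\Sigma_\lambda$ whose random measures $\eta^{(\omega)}_\lambda$ have a genuine convolution structure and satisfy the mixture identity $\nu_\lambda^{\mathbf{p}}=\int \eta^{(\omega)}_\lambda\,d\PP(\omega)$; hence it suffices to prove that $\eta^{(\omega)}_\lambda$ is absolutely continuous $\PP$-a.s., for Lebesgue-a.e.\ $\lambda$ in the relevant region.

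By construction $\eta^{(\omega)}_\lambda=\Conv_{n\in\N}\zeta_n^{(\omega)}$ is an honest infinite convolution of discrete measures. Split
\[
\eta^{(\omega)}_\lambda=(\eta')^{(\omega)}_\lambda*(\eta'')^{(\omega)}_\lambda,\qquad (\eta')^{(\omega)}_\lambda:=\Conv_{s\mid n}\zeta_n^{(\omega)},\ \ (\eta'')^{(\omega)}_\lambda:=\Conv_{s\nmid n}\zeta_n^{(\omega)}.
\]
Both factors are themselves (after a deterministic rescaling) generated by random models of the same general form, with selection measure a pushforward of $\PP$. By the convolution lemma of \cite{Shmerkin14} --- stating that a convolution of a member of $\D_1$ with a measure of full Hausdorff dimension is absolutely continuous --- it is enough to show that, for Lebesgue-a.e.\ $\lambda$, both $(\eta')^{(\omega)}_\lambda\in\D_1$ and $\dim_H (\eta'')^{(\omega)}_\lambda=1$ hold for $\PP$-a.e.\ $\omega$.

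The first assertion is essentially immediate from Theorem~\ref{thm:Fourier-decay}: taking $r\geq 2$ guarantees that the model underlying $(\eta')^{(\omega)}_\lambda$ is non-degenerate (distinctness of the $t_j$ forces two distinct words of the same composition type to produce different translations); its contraction ratios on the $i$-th block are $\lambda^{s\beta(i)}$; and so Theorem~\ref{thm:Fourier-decay} yields a Borel set $\mathcal{G}\subset\Omega\times(0,1)$ for which $(\eta')^{(\omega)}_\lambda\in\D_1$ whenever $(\omega,\lambda)\in\mathcal{G}$, while part (i) of that theorem combined with Fubini reverses the order of quantifiers. For the second assertion, observe that the similarity dimension of the model underlying $(\eta'')^{(\omega)}_\lambda$ tends to $s(\lambda,\mathbf{p})$ as $s\to\infty$, and thus exceeds $1$ once $s$ is large enough (depending only on $\varepsilon$). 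Theorem~\ref{thm:sup-exp-conc}, in contrapositive, then says: if this model fails to have dimension $1$, the super-exponential concentration $(\log\Delta_n^{(\cdot)})/n\overset{\PP}{\longrightarrow}-\infty$ must hold.

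The crux of the proof, and what I expect to be the main obstacle, is converting this super-exponential-concentration statement into an exclusion of a Lebesgue-null set of parameters $\lambda$. This requires a transversality-type estimate: for any fixed $M>0$,
\[
\int \PP\bigl(\{\omega:\Delta_n^{(\omega)}(\lambda)\le e^{-Mn}\}\bigr)\,d\lambda \longrightarrow 0
\]
sufficiently rapidly as $n\to\infty$, obtained by bounding the Lebesgue measure of those $\lambda$ for which any one of the exponentially many differences $t_u^{(\omega)}(\lambda)-t_v^{(\omega)}(\lambda)$ is smaller than $e^{-Mn}$, exploiting that these are real-analytic functions of $\lambda$ whose coefficients are non-trivially randomized by the Bernoulli $\PP$. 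A Borel--Cantelli argument then excludes a Lebesgue-null set of $\lambda$, outside of which the contrapositive of Theorem~\ref{thm:sup-exp-conc} forces the dimension of the $(\eta'')$-model to equal $1$; combined with Theorem~\ref{thm:exact}, this yields $\dim_H (\eta'')^{(\omega)}_\lambda=1$ for $\PP$-a.e.\ $\omega$, completing the proof via the convolution lemma.
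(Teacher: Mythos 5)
Your overall architecture matches the paper's: the blocking by composition type into a random homogeneous model with $\nu_\lambda^{\mathbf{p}}=\int\eta_\lambda^{(\omega)}\,d\PP(\omega)$, the convolution splitting along $s\mid n$ versus $s\nmid n$, the convolution lemma from \cite{Shmerkin14}, and the application of Theorem~\ref{thm:Fourier-decay} plus Fubini for the $\D_1$ factor are all exactly what the paper does (the paper additionally verifies the bookkeeping that $\textup{s-dim}$ only drops by factors $(1-\delta)$ and $(1-1/s)$ under blocking and thinning, which you correctly anticipate).

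The genuine gap is in the step you yourself flag as the crux. Your proposed mechanism for excluding the bad set of $\lambda$ --- bounding $\int\PP\bigl(\Delta_n^{(\omega)}(\lambda)\le e^{-Mn}\bigr)\,d\lambda$ by a union bound over pairs $u\neq v$ and estimating the Lebesgue measure of the sublevel sets $\{\lambda:|t_u^{(\omega)}(\lambda)-t_v^{(\omega)}(\lambda)|\le e^{-Mn}\}$ --- is the transversality method, and it fails in the full super-critical region. The functions $\psi_{u,v}$ are generalized power series with $\sim n$ terms, so the best general sublevel-set bound (Remez/Chebyshev type) is of order $(e^{-Mn})^{1/n}=e^{-M}$, a constant; multiplied by the exponentially many pairs, the union bound diverges. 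Making it summable requires a transversality condition $|\{\lambda:|\psi_{u,v}(\lambda)|\le\delta\}|=O(\delta)$ uniformly in $u,v$, which is precisely what holds only on the restricted regions of Neunh\"auserer and Ngai--Wang and is the obstruction this paper is built to circumvent. The correct tool is \cite[Theorem~1.8]{Hochman14} (packaged in the paper as Lemma~\ref{lem:dim-0-scc}): since each $\psi_{u,u'}$ is a nonzero generalized power series in $\lambda$, Hochman's theorem directly gives that $\{\lambda:\log\wtil{\Delta}_n(\lambda)/n\to-\infty\}$ has Hausdorff dimension zero, by a covering argument rather than by measure estimates of sublevel sets. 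A second point your sketch misses is that no integration over $\omega$ is needed at all for this step: the separation quantity of the random thinned model dominates a deterministic one, $\Delta_\ell^{(F(\omega))}(\Sigma''_\lambda)\ge\Delta_{s\ell}^{(\omega)}(\Sigma_\lambda)\ge\wtil{\Delta}_{rs\ell}(\lambda)$, uniformly in $\omega$, so once $\lambda$ avoids Hochman's dimension-zero set one gets a lower bound $\log\Delta_{n_j}^{(\omega'')}/n_j\ge M>-\infty$ along a subsequence for \emph{every} $\omega''$, which is what the contrapositive of Theorem~\ref{thm:sup-exp-conc} requires. As written, your argument leaves the decisive estimate both unproved and, in the form stated, unprovable.
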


The rest of this section is devoted to the proof of Proposition \ref{prop:non-hom}. We write $( g_{\lam,i}(x)= \lam^{\beta_i}x+t_i)$ for the IFS corresponding to the parameter $\lam$. Moreover, if $u\in\{1,\ldots,k\}^n$, we will write $g_{\lam,u} = g_{\lam,u_1}\circ\cdots\circ g_{\lam,u_n}$ for short.

\begin{lemma} \label{lem:non-hom-to-random-hom}
For any $\delta>0$ there exist $r\in\N$, depending on $\delta$ and $\mathbf{p}$, and a parametrized family of models $\Sigma_\lam=((\Phi^{(i)}_\lam)_{i \in I},(\wtil{p}_i)_{i \in I},\PP)$, such that the following holds:
\begin{enumerate}
\item[(i).] The measure $\PP$ depends only on $\mathbf{p}$ and $r$, the number of maps $k_i$ in $\Phi^{(i)}_\lam$ depends only on $r$ (and $k$), the probability vectors $\wtil{p}_i$ are all uniform, and  if we write
\[\Phi^{(i)}_\lam = (f_{\lam,1}^{(i)},\ldots,f_{\lam,k_i}^{(i)}),\]
then the maps $f_{\lam,j}^{(i)}$ depend on $\lam$ (and $t_i,\beta_i$).
\item[(ii).] $\{ g_{\lam,u}: u\in\{1,\ldots,k\}^r\}$ is the disjoint union of the sets $\{ f_{\lam,1}^{(i)},\ldots,f_{\lam,k_i}^{(i)}\}$, $i\in I$.
\item[(iii).] $\PP$ is a globally supported Bernoulli measure.
\item[(iv).]  $\nu_\lam^{\mathbf{p}} = \int \eta_\lam^\pom\,d\PP(\om)$, where $\eta_\lam^\pom$ are the random measures generated by the model $\Sigma_\lam$.
\item[(v).] $\textup{s-dim}(\Sigma_\lam) > (1-\delta) s(\lam,\mathbf{p})$.
\end{enumerate}
\end{lemma}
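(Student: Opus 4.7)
The plan is to group length-$r$ compositions of the generating maps by their ``type'' (the multiset of symbols appearing), since any two compositions of the same type have the same contraction ratio. Fix an integer $r$ to be chosen at the end, let
\[
I=\bigl\{\mathbf{i}=(i_1,\ldots,i_k):\ i_j\ge 0,\ i_1+\cdots+i_k=r\bigr\},
\]
and for each $\mathbf{i}\in I$ set $k_\mathbf{i}=\binom{r}{i_1,\ldots,i_k}$ and fix an enumeration $u^{(\mathbf{i},1)},\ldots,u^{(\mathbf{i},k_\mathbf{i})}$ of the words $u\in\{1,\ldots,k\}^r$ in which the symbol $j$ appears exactly $i_j$ times. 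I would then set
\[
f^{(\mathbf{i})}_{\lam,\ell}:=g_{\lam,u^{(\mathbf{i},\ell)}},\qquad \Phi^{(\mathbf{i})}_\lam:=\bigl(f^{(\mathbf{i})}_{\lam,1},\ldots,f^{(\mathbf{i})}_{\lam,k_\mathbf{i}}\bigr),\qquad \wtil{p}_\mathbf{i}:=\bigl(\tfrac{1}{k_\mathbf{i}},\ldots,\tfrac{1}{k_\mathbf{i}}\bigr),
\]
and take $\PP$ to be the Bernoulli measure on $I^\N$ with marginal $q=(q_\mathbf{i})_{\mathbf{i}\in I}$, where $q_\mathbf{i}:=k_\mathbf{i}\prod_{j=1}^k p_j^{i_j}$ is the multinomial probability of type $\mathbf{i}$. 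Properties (i) and (iii) are then immediate, and (ii) follows because each word $u\in\{1,\ldots,k\}^r$ has a unique type and is enumerated exactly once inside the corresponding $\Phi^{(\mathbf{i})}_\lam$.

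For (iv), iterating the self-similarity relation for $\nu_\lam^{\mathbf{p}}$ exactly $r$ times and regrouping by type gives
\[
\nu_\lam^{\mathbf{p}}=\sum_{u\in\{1,\ldots,k\}^r}p_u\,g_{\lam,u}\nu_\lam^{\mathbf{p}}=\sum_{\mathbf{i}\in I}q_\mathbf{i}\sum_{\ell=1}^{k_\mathbf{i}}\tfrac{1}{k_\mathbf{i}}\,f^{(\mathbf{i})}_{\lam,\ell}\nu_\lam^{\mathbf{p}},
\]
using that $p_u=\prod_j p_j^{i_j}$ whenever $u$ has type $\mathbf{i}$. Hence $\nu_\lam^{\mathbf{p}}$ and $\int\eta_\lam^\pom\,d\PP(\om)$ both satisfy the same random self-similarity relation \eqref{eq:dssr}, and so they coincide by uniqueness of the invariant measure; equivalently, iterating the identity above $n$ times and letting $n\to\infty$ identifies the right-hand side with the $\PP$-average of the finite truncations of $\eta_\lam^\pom$.

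The substance is in (v). Since each $\wtil{p}_\mathbf{i}$ is uniform on $k_\mathbf{i}$ symbols, its entropy is $\log k_\mathbf{i}$, and every map in $\Phi^{(\mathbf{i})}_\lam$ has contraction ratio $\lam^{\sum_j\beta_j i_j}$. Plugging this into the definition of $\textup{s-dim}(\Sigma_\lam)$ and using $\sum_\mathbf{i} q_\mathbf{i}\, i_j=rp_j$ yields
\[
\textup{s-dim}(\Sigma_\lam)=\frac{-\sum_\mathbf{i} q_\mathbf{i}\log k_\mathbf{i}}{r\log\lam\sum_j p_j\beta_j},
\]
and the algebraic identity $\log k_\mathbf{i}=\log q_\mathbf{i}-\sum_j i_j\log p_j$ transforms the numerator into $H(q)+r\sum_j p_j\log p_j$, giving
\[
\textup{s-dim}(\Sigma_\lam)=s(\lam,\mathbf{p})\Bigl(1-\tfrac{H(q)}{rH(\mathbf{p})}\Bigr),\qquad H(\mathbf{p})=-\sum_j p_j\log p_j.
\]
The key point, and the only genuine obstacle to uniformity in $\lam$, is that the $\log\lam$ factors cancel in this ratio so the relative correction $H(q)/(rH(\mathbf{p}))$ does not depend on $\lam$ at all. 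Since $q$ is supported on $|I|=O_k(r^{k-1})$ atoms, we have $H(q)\le\log|I|=O_k(\log r)$, so taking $r=r(\delta,\mathbf{p},k)$ sufficiently large forces $H(q)<\delta rH(\mathbf{p})$ and establishes (v).
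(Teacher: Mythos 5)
Your proposal is correct and follows essentially the same route as the paper: grouping length-$r$ words by type, taking the multinomial law as the Bernoulli marginal, uniform weights within each type, uniqueness of self-similar measures for (iv), and the bound $H(q)\le\log|I|=O_k(\log r)$ for (v). Your derivation of the exact identity $\textup{s-dim}(\Sigma_\lam)=s(\lam,\mathbf{p})\bigl(1-H(q)/(rH(\mathbf{p}))\bigr)$ is a slightly cleaner packaging of the same entropy chain-rule computation the paper carries out as an inequality.
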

\begin{proof}
The proof is very similar to \cite[Lemma 6.6]{GSSY16} but we provide all details for completeness. See also \S\ref{subsec:outline-of-proof} for an alternative explanation of Part (iv) in the special case $k=2$. Fix $r\in \N$, and let
\[
I:= \left\{ (n_1,\ldots,n_k)\in\N_0^k: \sum_{j=1}^k n_j = r\right\}.
\]
For each word $u\in \{1,\ldots,k\}^r$, let $N_j(u)$ be the number of times the symbol $j$ appears in $u$. Let us define a map $\Psi:\{1,\ldots,k\}^r\to I$ as
\[
\Psi(u) = (N_1(u),\ldots,N_k(u)).
\]
Further, for each $\vec{n}\in I$, let
\[
q_{\vec{n}} = \sum_{u\in\Psi^{-1}(\vec{n})} p_{u_1}\cdots p_{u_r}= |\Psi^{-1}(\vec{n})| p_1^{\vec{n}_1}\cdots p_k^{\vec{n}_k}.
\]
Write $k_{\vec{n}}=|\Psi^{-1}(\vec{n})|$, and enumerate $\Psi^{-1}(\vec{n})=(u^{(\vec{n})}_1,\ldots,u^{(\vec{n})}_{k_{\vec{n}}})$. Set also
\begin{align*}
f_{\lam,j}^{(\vec{n})} &= g_{\lam,u_j^{(\vec{n})}},\\
\wtil{p}_j^{(\vec{n})} &= p_{u_j^{(\vec{n})}}/q_{\vec{n}} = k_{\vec{n}}^{-1}.
\end{align*}
Finally, define $\PP$ as the Bernoulli measure on $I^\N$ with marginal $q$.

This data defines a family of models that clearly satisfy (i), (ii) and (iii). In particular, all the maps $f_{\lam,j}^{(\vec{n})}$, $j=1,\ldots,k_{\vec{n}}$, have the same contraction ratio $\prod_{i=1}^k \lambda^{n_i \beta_i}$.

Using the self-similarity relation \eqref{eq:dssr} and the definition of $\PP$, we get
\begin{align*}
\int_\Omega \eta_\lam^\pom\,d\PP(\om) &= \int_\Omega \sum_{j=1}^{k_{\om_1}} \wtil{p}_j^{(\om_1)} f_{\lam,j}^{(\om_1)}(\eta_\lam^{(T\om)})\,d\PP(\om)\,\\
&= \sum_{\vec{n}\in I} q_{\vec{n}} \int_\Omega \sum_{j=1}^{k_{\vec{n}}} \wtil{p}_j^{(\vec{n})} f_{\lam,j}^{(\vec{n})}(\eta_\lam^{(\om)})\,d\PP(\om)\\
&= \sum_{\vec{n}\in I} \sum_{j=1}^{k_{\vec{n}}} p_{u_j^{(\vec{n})}} \int_\Omega g_{\lam,u_j^{(\vec{n})}}(\eta_\lam^\pom)\,d\PP(\om)\\
&= \sum_{u\in\{1,\ldots,k\}^r}  p_u \int_\Omega \,g_{\lam,u}(\eta_\lam^\pom)\,d\PP(\om)\\
&= \sum_{u\in\{1,\ldots,k\}^r}  p_u\, g_{\lam,u}\left(\int_\Omega \eta_\lam^\pom\,d\PP(\om)\right),
\end{align*}
which, by uniqueness of self-similar measures, establishes (iv).

It remains to estimate the similarity dimension of $\Sigma_\lam$. For $u\in\{1,\ldots,k\}^r$, we write $\lam_u = \lam^{\beta_{u_1}+\ldots+\beta_{u_r}}$, i.e. $\lam_u$ is the contraction ratio of $g_{\lam,u}$. With this notation in hand, we have
\begin{align*}
\textup{s-dim}(\Sigma_\lam) &= \frac{-\sum_{\vec{n}\in I} q_{\vec{n}} H(\wtil{p}_{\vec{n}}) }{\sum_{\vec{n}\in I} q_{\vec{n}} \log\prod_{j=1}^k \lambda^{ n_j \beta_j}}\\
&=\frac{\sum_{\vec{n}\in I} \sum_{i=1}^{k_{\vec{n}}}   p_{u_i^{(\vec{n})}} \log( p_{u_i^{(\vec{n})}}/q_{\vec{n}})}{\sum_{u\in\{1,\ldots,k\}^r}p_u\log(\lam_u) }\\
&=\frac{- H\left((p_u)_{u\in\{1,\ldots,k\}^r}\right) -  \sum_{\vec{n}\in I}q_{\vec{n}}\log(q_{\vec{n}})}{\sum_{u\in\{1,\ldots,k\}^r}p_u\log(\lam_u) }\\
&\ge \frac{- H\left((p_u)_{u\in\{1,\ldots,k\}^r}\right) +\log|I|}{\sum_{u\in\{1,\ldots,k\}^r}p_u\log(\lam_u) }\\
&\ge \frac{-r H(\mathbf{p})+\log((r+1)^k)H(\mathbf{p})/H(\mathbf{p})}{r \sum_{i=1}^k p_i \log(\lam^{-\beta_i})}\\
&=  \frac{r- \log((r+1)^k)/H(\mathbf{p})}{r} \, s(\lam,\mathbf{p}),
\end{align*}
which shows that (v) holds provided $r$ is taken large enough in terms of $\delta$ and $H(\mathbf{p})$.
\end{proof}

We go back to the proof of Proposition \ref{prop:non-hom}. Let $\delta=\eps/2$ and choose $r \in \N$ so that Lemma \ref{lem:non-hom-to-random-hom} holds for this choice of $\delta$ and $\mathbf{p}$. Let $(\Sigma_\lam)$ be the models given by Lemma \ref{lem:non-hom-to-random-hom}. Using parts (i),(ii) of the lemma, write
\begin{equation} \label{eq:functions-random-non-hom}
 \Phi_\lam^{(i)} =  (\lam^{\gamma_i} x+\wtil{t}_{\lam,j}^{(i)})_{1\le j\le k_i},
\end{equation}
where $\gamma_i>0$ depend on $\beta_1,\ldots,\beta_k$. Note that, once $\lam$ and $\om$ have been fixed, the measure $\eta_\lam^\pom$ can be interpreted as the distribution of the random sum
\[
\sum_{n\in\N}  \left(\prod_{j=1}^{n-1}\lambda^{\gamma_{\om_j}}\right)\wtil{t}^{(\om_n)}_{\lam, u_n},
\]
where the sequence $(u_n)$ is chosen according to the product measure $\oeta^\pom$ (which does not depend on $\lam$). Fix $s\in\N$. By splitting $\sum_{n\in\N}$ into $\sum_{n \in \N,s|n}$ and $\sum_{n \in \N,s\nmid n}$, we can decompose $\eta_\lam^\pom$ as a convolution
\begin{equation} \label{eq:convolution}
\eta_\lam^\pom = (\eta'_\lam)^{\pom}*(\eta''_\lam)^{\pom}.
\end{equation}
That is, $(\eta'_\lam)^{\pom}$ corresponds to ``keep only every $s$-th term'' and $(\eta''_\lam)^{\pom}$ to ``skip every $s$-th term'' in the construction of $\eta_\lam^\pom$. Formally, define two maps $\wtil{\Pi}'_{\lam,\om}, \wtil{\Pi}''_{\lam,\om}:\Omega\to\R$ as follows:
\begin{align*}
\wtil{\Pi}'_{\lam,\om}(u) &= \sum_{n \in \N,s|n} \left(\prod_{j=1}^{n-1}\lambda^{\gamma_{\om_j}}\right)\wtil{t}^{(\om_n)}_{\lam, u_n}, \\
\wtil{\Pi}''_{\lam,\om}(u)&= \sum_{n \in \N,s\nmid n} \left(\prod_{j=1}^{n-1}\lambda^{\gamma_{\om_j}}\right)\wtil{t}^{(\om_n)}_{\lam, u_n}.
\end{align*}
Then \eqref{eq:convolution} holds for $(\eta'_\lam)^{\pom}=\wtil{\Pi}'_{\lam,\om}(\oeta^\pom)$ and $(\eta''_\lam)^{\pom}=\wtil{\Pi}''_{\lam,\om}(\oeta^\pom)$.

Recalling the decomposition given by Lemma \ref{lem:non-hom-to-random-hom}(iv), notice that if $\eta_\lam^\pom$ is absolutely continuous for $\PP$-almost all $\om$, then so is $\nu_\lam^{\mathbf{p}}$ (since if $A$ is Lebesgue-null, then $\eta_\lam^\pom(A)=0$ for $\PP$-almost all $\om$, and so $\nu_\lam^{\mathbf{p}}(A)=0$). To establish absolute continuity of $\eta_\lam^\pom$, we will rely on the following result from \cite{Shmerkin14}.
\begin{lemma}
 Let $\mu',\mu''$ be two Borel probability measures on $\R$ such that $\mu'\in\mathcal{D}_1$ and $\dim_H\mu''=1$. Then $\mu'*\mu''$ is absolutely continuous.
\end{lemma}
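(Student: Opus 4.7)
My plan is to show that $\mu'*\mu''$ has an $L^2$ density up to an arbitrarily small total-variation error, via Plancherel's theorem and a Frostman restriction of $\mu''$. The reduction step: it would suffice to produce, for every $\eps>0$, a decomposition $\mu''=\mu''_\eps+\rho_\eps$ into positive Borel measures with $\rho_\eps(\R)<\eps$ and $\mu'*\mu''_\eps$ absolutely continuous. Indeed, for any Borel set $S\subset\R$ of Lebesgue measure zero one would then have $(\mu'*\mu'')(S)=(\mu'*\mu''_\eps)(S)+(\mu'*\rho_\eps)(S)\le 0+\eps$, and letting $\eps\downarrow 0$ would yield $(\mu'*\mu'')(S)=0$.

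To produce such a decomposition, I would extract a Frostman-regular restriction of $\mu''$. Since $\mu'\in\mathcal{D}_1$, fix $\sig>0$ with $|\widehat{\mu'}(\xi)|\lesssim |\xi|^{-\sig}$, and choose $s$ with $1-2\sig<s<1$, which is possible because $\sig>0$. The hypothesis $\dim_H\mu''=1$ implies that the lower local dimension of $\mu''$ is at least $s$ at $\mu''$-a.e.\ point, so the Borel sets $E_n:=\{x\in\R: \mu''(B(x,r))\le r^s \text{ for all rational } 0<r\le 1/n\}$ satisfy $\mu''(E_n)\uparrow 1$. Fixing $n$ with $\mu''(E_n)>1-\eps$ and setting $\mu''_\eps:=\mu''|_{E_n}$, a routine covering argument provides a constant $C=C(n)$ with $\mu''_\eps(B(x,r))\le C r^s$ for all $x\in\R$ and $r>0$, so $\mu''_\eps$ is $s$-Frostman.

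To check that $\mu'*\mu''_\eps$ admits an $L^2$ density, by Plancherel's theorem it is enough to see $\widehat{\mu'}\cdot\widehat{\mu''_\eps}\in L^2(\R)$. The region $|\xi|\le 1$ contributes a bounded amount since both factors are of modulus at most $1$. For $|\xi|\ge 1$, combining the Fourier decay of $\mu'$ with the elementary inequality $|\xi|^{-2\sig}\le |\xi|^{s'-1}$, valid for any $s'\in (1-2\sig,s)$, would give
\[
\int_{|\xi|\ge 1}|\widehat{\mu'}(\xi)|^2\,|\widehat{\mu''_\eps}(\xi)|^2\,d\xi\lesssim \int_{\R}|\xi|^{s'-1}|\widehat{\mu''_\eps}(\xi)|^2\,d\xi\lesssim I_{s'}(\mu''_\eps),
\]
where $I_{s'}$ denotes the Riesz $s'$-energy and the last inequality is the classical Fourier--energy identity on $\R$. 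Since $\mu''_\eps$ is $s$-Frostman with $s>s'$, a routine dyadic estimate yields $I_{s'}(\mu''_\eps)<\infty$.

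The only subtle point is the Frostman-restriction step, where the hypothesis $\dim_H\mu''=1$ is converted into a uniform pointwise decay on a subset of large $\mu''$-measure; this is dealt with by the Egorov-type argument sketched above, and is essentially the only place the dimensional hypothesis is used. The remaining ingredients---Plancherel's theorem and the Fourier--energy identity---are entirely classical, so I do not anticipate any further obstacle.
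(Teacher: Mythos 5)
Your proof is correct and is essentially the standard argument: the paper itself gives no proof of this lemma, quoting it from \cite{Shmerkin14}, and your Frostman restriction of $\mu''$ followed by the Plancherel/energy estimate is precisely the argument of \cite[Lemma 2.1]{Shmerkin14}. The only cosmetic point is that $s'$ should also be taken positive (i.e.\ $s'\in(\max\{0,1-2\sig\},s)$) so that the Fourier--energy identity applies, which is always possible.
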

In light of this lemma  and \eqref{eq:convolution}, the proof of Proposition \ref{prop:non-hom} (and therefore of Theorem \ref{thm:abs-cont-non-hom}) will be finished once we have proved the following two lemmas:

\begin{lemma} \label{lem:as-Fourier-decay}
 For any choice of $s\in\N$, for almost all $\lam\in (0,1)$, the measure $(\eta'_\lam)^\pom$ is in $\mathcal{D}_1$ for $\PP$-almost all $\om$.
 \end{lemma}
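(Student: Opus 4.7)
The plan is to realize $(\eta'_\lam)^\om$ as the random self-similar measure generated by an ``$s$-step coarsened'' model to which Theorem~\ref{thm:Fourier-decay} applies, and then to combine this with a Fubini argument to swap the order of quantifiers.

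Concretely, I would group the original symbols into blocks of length $s$: set $\widehat I := I^s$, write $\widehat\om_m = (\om_{(m-1)s+1},\ldots,\om_{ms})\in\widehat I$ for the $m$-th block, and note that
\[
\prod_{j=1}^{sm-1}\lam^{\gamma_{\om_j}} \;=\; \left(\prod_{l=1}^{m-1}\lam^{\widehat\gamma_{\widehat\om_l}}\right)\lam^{\gamma_{\widehat\om_{m,1}}+\cdots+\gamma_{\widehat\om_{m,s-1}}},
\]
where $\widehat\gamma_{(i_1,\ldots,i_s)}:=\gamma_{i_1}+\cdots+\gamma_{i_s}$. Substituting into the formula for $\widetilde\Pi'_{\lam,\om}$ identifies $(\eta'_\lam)^\om$ with the random self-similar measure $\eta^{\widehat\om}$ generated by a model $\Sigma'_\lam$ on the alphabet $\widehat I$, in which for each $\widehat i = (i_1,\ldots,i_s)$ the IFS $\widehat\Phi^{(\widehat i)}_\lam$ consists of the $k_{i_s}$ maps
\[
x\mapsto \lam^{\widehat\gamma_{\widehat i}}x \;+\; \lam^{\gamma_{i_1}+\cdots+\gamma_{i_{s-1}}}\,\widetilde t^{(i_s)}_{\lam,j},\qquad j=1,\ldots,k_{i_s},
\]
equipped with uniform weights (inherited from Lemma~\ref{lem:non-hom-to-random-hom}(i)) and selection measure $\widehat\PP$ given by the $s$-blocking pushforward of $\PP$, which is again Bernoulli. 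Non-degeneracy of $\Sigma'_\lam$ at a given $\lam$ reduces to the existence of some $i\in I$ admitting two distinct $\widetilde t^{(i)}_{\lam,j}$'s; since the $\widetilde t^{(i)}_{\lam,j}$ are real-analytic in $\lam$ and the paper's standing assumption is that $t_1,\ldots,t_k$ are distinct, the relevant difference is a non-constant analytic function of $\lam$, so $\Sigma'_\lam$ is non-degenerate outside a Lebesgue-null set of $\lam$'s.

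With this reduction in hand I would apply Theorem~\ref{thm:Fourier-decay} to $\Sigma'_\lam$, taking as exponents the strictly positive numbers $\widehat\gamma_{\widehat i}$ and as Bernoulli measure the $\widehat\PP$ just constructed. The theorem provides a Borel set $\widehat{\mathcal G}\subset \widehat I^\N\times(0,1)$ whose $\widehat\om$-fiber is Hausdorff-dimension zero (a fortiori Lebesgue-null) for $\widehat\PP$-a.e.\ $\widehat\om$, and on which $\eta^{\widehat\om}\in\mathcal D_1$ whenever the underlying model is non-degenerate. Applying Fubini to the indicator of $\widehat{\mathcal G}^c$ shows that this set has zero $\widehat\PP\otimes\mathrm{Leb}$-measure, so that for Lebesgue-a.e.\ $\lam$ the $\widehat\om$-fiber of $\widehat{\mathcal G}^c$ is $\widehat\PP$-null. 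Intersecting with the full-measure set of $\lam$'s on which $\Sigma'_\lam$ is non-degenerate yields $(\eta'_\lam)^{\widehat\om}\in\mathcal D_1$ for $\widehat\PP$-a.e.\ $\widehat\om$ and Lebesgue-a.e.\ $\lam$; since $(\eta'_\lam)^\om$ depends on $\om$ only through $\widehat\om$ and $\widehat\PP$ is the pushforward of $\PP$, this is exactly the statement of the lemma.

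I expect the main obstacle to lie in the first step: verifying carefully that after the blocking rewrite $(\eta'_\lam)^\om$ truly fits the ``random self-similar measure'' formalism of Section~\ref{sec:EK}, with contraction ratios of the prescribed form $\lam^{\widehat\beta_{\widehat i}}$ and deterministic positive exponents $\widehat\beta_{\widehat i}=\widehat\gamma_{\widehat i}$ independent of $\widehat\om$, and then checking the Borel-measurability and Bernoulli character of $\widehat\PP$. Once this bookkeeping is in place, the remainder of the argument is essentially Theorem~\ref{thm:Fourier-decay} combined with a routine Fubini application and a one-line use of real-analyticity to handle non-degeneracy.
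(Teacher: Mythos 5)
Your proposal is correct and follows essentially the same route as the paper: re-encode $(\eta'_\lam)^{(\om)}$ as the random measure of an $s$-blocked model on the alphabet $I^s$ with fixed exponents, apply Theorem~\ref{thm:Fourier-decay} to that model (whose exceptional set is independent of the $\lam$-dependent translations, as long as non-degeneracy holds), and swap quantifiers by Fubini. The only cosmetic differences are that you carry the prefactor $\lam^{\gamma_{i_1}+\cdots+\gamma_{i_{s-1}}}$ explicitly in the blocked translations (which is the careful bookkeeping) and you dispose of non-degeneracy by an analyticity argument off a null set of $\lam$, whereas the paper observes it holds for every $\lam$ directly from Lemma~\ref{lem:non-hom-to-random-hom}(ii); both suffice.
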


\begin{lemma} \label{lem:as-full-dim}
 There is $s\in \N$ such that for almost all $\lam\in (0,1)$ such that $s(\lam,\mathbf{p})>1+\eps$, the measure $(\eta''_\lam)^\pom$ has exact dimension $1$ for $\PP$-almost all $\om$.
\end{lemma}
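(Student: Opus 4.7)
The plan is to recognize $(\eta''_\lam)^\pom$ as a random self-similar measure fitting the framework of Sections \ref{sec:exact}--\ref{sec:dimension}, then combine Theorems \ref{thm:exact} and \ref{thm:sup-exp-conc} with a Borel-Cantelli/transversality argument in the parameter $\lam$. I would first describe the generating model $\Sigma''_\lam$ explicitly: group the indices $n \in \N$ into blocks $B_k = \{sk+1,\ldots,s(k+1)\}$, take $I^s$ as the new index set, and to each block letter $\overline{\om} \in I^s$ associate the IFS with contraction ratio $\lam^{\gamma_{\overline{\om}_1}+\cdots+\gamma_{\overline{\om}_s}}$ (the \emph{full} block scaling, including the skipped $s$-th slot) but with only the $s-1$ maps corresponding to positions $1,\ldots,s-1$; weights are products of the uniform vectors $\wtil{p}_{\overline{\om}_j}$, and the selection measure $\PP''$ on $(I^s)^\N$ is the Bernoulli product induced by $\PP$. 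A direct verification using \eqref{eq:dssr} shows $(\eta''_\lam)^\pom$ is generated by $\Sigma''_\lam$, and $\Sigma''_\lam$ is non-degenerate whenever $\Sigma_\lam$ is.

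Since $\PP''$ is Bernoulli (hence ergodic), Theorem \ref{thm:exact} gives that $(\eta''_\lam)^\pom$ has exact dimension $\alpha_\lam := \dim(\Sigma''_\lam) \leq 1$ for $\PP$-a.e. $\om$. A short computation yields $\textup{s-dim}(\Sigma''_\lam) = \frac{s-1}{s}\,\textup{s-dim}(\Sigma_\lam)$ — the entropy term drops one slot per block while the Lyapunov exponent keeps all $s$. By Lemma \ref{lem:non-hom-to-random-hom}(v) with $\delta = \eps/4$, we have $\textup{s-dim}(\Sigma_\lam) > 1 + \eps/2$ whenever $s(\lam,\mathbf{p}) > 1 + \eps$, so for $s = s(\eps)$ large enough $\textup{s-dim}(\Sigma''_\lam) > 1$ and $\min\{1,\textup{s-dim}(\Sigma''_\lam)\} = 1$. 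Theorem \ref{thm:sup-exp-conc} then gives the dichotomy: either $\alpha_\lam = 1$, or $\log \Delta_n^{(\cdot)}(\Sigma''_\lam)/n \overset{\PP}{\longrightarrow} -\infty$.

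It remains to rule out the second alternative for Lebesgue-a.e. $\lam$. I would do this by Fubini combined with a transversality-type bound: for each $M > 0$ and $n \in \N$,
\[
\int \PP\bigl(\Delta_n^\pom(\Sigma''_\lam) < e^{-Mn}\bigr)\,d\lam = \int_\Omega \mathrm{Leb}\bigl\{\lam: \Delta_n^\pom(\Sigma''_\lam) < e^{-Mn}\bigr\}\,d\PP(\om),
\]
and for fixed $\om$ the inner Lebesgue measure is at most $\sum_{u \neq v \in \X_n^\pom(\Sigma''_\lam)}\mathrm{Leb}\{\lam: |t_u^\pom(\lam)-t_v^\pom(\lam)| < e^{-Mn}\}$. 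The number of pairs grows at most exponentially in $n$, and each difference is a real-analytic function of $\lam$. Aside from the pairs of exact coincidence $t_u^\pom \equiv t_v^\pom$ (which are handled combinatorially), one obtains sublevel-set bounds of the form $C\eps^c$ with $c>0$; picking $M$ large compared to the exponential growth rate of the number of pairs makes the right-hand side summable in $n$. Borel-Cantelli then forces $\PP(\Delta_n^\pom < e^{-Mn}) \to 0$ for Lebesgue-a.e. $\lam$, contradicting super-exponential concentration and therefore yielding $\alpha_\lam = 1$.

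The main obstacle is the transversality-style sublevel-set bound uniform across the combinatorially complex pairs $(u,v)$ in $\Sigma''_\lam$, together with the separation of exact coincidences. This is precisely where an Erdős-Kahane-type combinatorial counting analogous to Proposition \ref{prop:Four} is needed, applied now to the block-level translations $t_u^\pom - t_v^\pom$ rather than to Fourier phases.
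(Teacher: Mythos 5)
Your construction of the block model $\Sigma''_\lam$, the identity $\wtil{\eta}_\lam^{(F(\om))}=(\eta''_\lam)^{\pom}$, the application of Theorem \ref{thm:exact}, the computation $\textup{s-dim}(\Sigma''_\lam)=(1-1/s)\,\textup{s-dim}(\Sigma_\lam)$, and the reduction via Theorem \ref{thm:sup-exp-conc} to ruling out super-exponential concentration all match the paper. The gap is in the one step you defer to the end: ruling out $\log\Delta_n^{(\cdot)}(\Sigma''_\lam)/n\overset{\PP}{\to}-\infty$ for a.e.\ $\lam$. Your plan --- Fubini plus a sublevel-set bound $\mathrm{Leb}\{\lam:|t_u^\pom(\lam)-t_v^\pom(\lam)|<\eps\}\le C\eps^c$ summed over the exponentially many pairs $(u,v)$, then Borel--Cantelli --- is precisely the transversality method of \cite{Solomyak95,PeresSolomyak96,Neun_2001,NgaiWang05}. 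A uniform bound of this kind with $c$ bounded below and $C$ controlled across all pairs is exactly the transversality condition, and it is known to fail on large portions of the super-critical region (in particular near $\lam=1$, where the power series $t_u-t_v$ can have higher-order zeros); this failure is the reason the Ngai--Wang result only covers a restricted region and the reason this paper exists. So the step you flag as ``the main obstacle'' is not a technical detail to be filled in by an Erd\H{o}s--Kahane count of phases; as stated, the argument cannot cover all $\lam$ with $s(\lam,\mathbf{p})>1+\eps$.

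The paper circumvents this entirely. The key observations are the two inequalities
\[
\Delta_\ell^{(F(\om))}(\Sigma''_\lam)\ \ge\ \Delta_{s\ell}^{\pom}(\Sigma_\lam)\ \ge\ \wtil{\Delta}_{rs\ell}(\lam),
\]
valid for every $\om$: the maps of the random block model are compositions of the original maps $g_{\lam,i}$ (by Lemma \ref{lem:non-hom-to-random-hom}(ii)), so the random separation quantity is bounded below by the \emph{deterministic} separation $\wtil{\Delta}_n(\lam)$ of the original non-homogeneous IFS, uniformly in $\om$. One then applies Hochman's parametric theorem \cite[Theorem 1.8]{Hochman14} (Lemma \ref{lem:dim-0-scc}) to that deterministic family: since the difference functions $\psi_{u,u'}(\lam)$ are real-analytic and not identically zero (the translations are distinct), the set of $\lam$ with $\log\wtil{\Delta}_n(\lam)/n\to-\infty$ has Hausdorff dimension zero. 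No sublevel-set estimate, no Fubini in $\lam$, and no counting of pairs is needed; Hochman's theorem replaces transversality wholesale and yields a dimension-zero (not merely null) exceptional set. If you want to complete your write-up along the paper's lines, replace your final paragraph with this reduction to $\wtil{\Delta}$ and the citation of \cite[Theorem 1.8]{Hochman14}.
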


\begin{proof}[Proof of Lemma \ref{lem:as-Fourier-decay}]
The idea is to realize $(\eta'_\lam)^{\pom}$ as a family of random measures generated by a related model $\Sigma'_\lam$, and apply Theorem \ref{thm:Fourier-decay} to $\Sigma'_\lam$, together with Fubini.

We begin by defining $\Sigma'_\lam$. Let $I'=I^s$ and, for $\vec{i}\in I'$, consider the IFS
 \[
(\Phi'_\lam)^{(\vec{i})} =  \left(\lam_{\vec{i}_1}\cdots \lam_{\vec{i}_s} x + \wtil{t}_{\lam,j}^{(\vec{i}_s)}\right)_{j=1}^{k_{\vec{i}_s}},
\]
Further, let $\widetilde{p}'_{\vec{i}}=\widetilde{p}_{\vec{i}_s}$, and set
\[
F(\om) = \left( (\om_{js+1},\ldots, \om_{(j+1)s}) \right)_{j=0}^\infty \in (I')^\N.
\]
It is a consequence of the definitions that if we denote the random measures generated by the model $\Sigma'_\lam=\left(((\Phi'_\lam)^{(\vec{i})})_{\vec{i} \in I'}, (\widetilde{p}'_{\vec{i}})_{\vec{i} \in I'}, F(\PP)\right)$  by $\wtil{\eta}_\lam^{\om'}$, then
\begin{equation} \label{eq:prime-model-to-old-model}
 \wtil{\eta}_\lam^{(F(\om))}=(\eta'_\lam)^{\pom}
\end{equation}

Note that the contraction ratio of the maps in $(\Phi'_\lam)^{(\vec{i})}$ is $\lam^{\beta'_{\vec{i}}}$, where $\beta'_{\vec{i}}>0$ are numbers that depend only on $s$ and the $\gamma_i$, which in turn depended only on $r$ and the $\beta_i$ (recall Lemma \ref{lem:non-hom-to-random-hom}). It also follows from Lemma \ref{lem:non-hom-to-random-hom}(ii)  and the definition of the $f_{\lambda,j}^{(\vec{i})}$ that, for any choice of $s$ and for any $\lam\in (0,1)$, the model $\Sigma'_\lam$ is non-degenerate. Finally, we note that $F(\PP)$ is a (globally supported) Bernoulli measure.

Let $\mathcal{L}$ be Lebesgue measure on $(0,1)$. We can then apply Theorem \ref{thm:Fourier-decay}, the identity \eqref{eq:prime-model-to-old-model} and Fubini's Theorem to conclude that there exists a full $(\PP\times\mathcal{L})$-measure Borel set $\mathcal{G}\subset \Omega\times(0,1)$, such that $(\eta'_\lam)^{\pom}\in\mathcal{D}_1$ for $(\om,\lam)\in\mathcal{G}$. We remark that, even though the translations in the model $\Sigma'_\lam$ also depend on the parameter $\lam$, in Theorem \ref{thm:Fourier-decay} the exceptional set is independent of the translations (as long as the model stays non-degenerate). We can apply again Fubini's Theorem to reach the desired conclusion.
\end{proof}

The proof of Lemma \ref{lem:as-full-dim} relies on a result of M. Hochman \cite[Theorem 1.8]{Hochman14}. We state only the consequence we will require.
\begin{lemma} \label{lem:dim-0-scc}
 If $h_i(x)=\lam_i x+t_i$ are affine maps, write $\wtil{\Delta}(h_1,h_2)=1$ if $\lam_1\neq \lam_2$ and $\wtil{\Delta}(h_1,h_2)=|t_1-t_2|$ if $\lam_1=\lam_2$. Given $\lam\in (0,1)$, define
\[
\wtil{\Delta}_n(\lam) = \min_{u\neq u'\in\{1,\ldots,k\}^n} \wtil{\Delta}(g_{\lam,u},g_{\lam,u'}).
\]
Then the following holds: the set
\[
E'=E'(\beta_1,\ldots,\beta_k) = \{ \lam\in (0,1): \log\wtil{\Delta}_n(\lam)/n\rightarrow -\infty\}
\]
has zero Hausdorff dimension (in particular, zero Lebesgue measure).
\end{lemma}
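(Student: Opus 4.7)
My plan is to apply Hochman's Theorem 1.8 from \cite{Hochman14} to the one-parameter real-analytic family of IFSs $\Phi_\lambda = (g_{\lambda,1},\ldots,g_{\lambda,k})$ with parameter $\lambda \in (0,1)$. Writing $s(u) := \sum_{j=1}^n \beta_{u_j}$ for $u \in \{1,\ldots,k\}^n$, we have $g_{\lambda,u}(x) = \lambda^{s(u)} x + t_{\lambda,u}$, where $t_{\lambda,u}$ is a real-analytic function of $\lambda \in (0,1)$ given by the standard geometric-series expression. Observe that if $s(u) \neq s(u')$ then, since $t \mapsto \lambda^t$ is strictly monotone on $(0,\infty)$ for every $\lambda \in (0,1)$, the contraction ratios $\lambda^{s(u)}$ and $\lambda^{s(u')}$ differ for every $\lambda \in (0,1)$, and hence $\widetilde{\Delta}(g_{\lambda,u},g_{\lambda,u'}) = 1$. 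Consequently,
\[
\widetilde{\Delta}_n(\lambda) = \min\Bigl(1,\; \min\bigl\{ |F_{u,u'}(\lambda)| : u \neq u' \in \{1,\ldots,k\}^n,\, s(u)=s(u') \bigr\} \Bigr),
\]
where $F_{u,u'}(\lambda) := t_{\lambda,u} - t_{\lambda,u'}$ is real-analytic in $\lambda$.

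\textbf{Application of Hochman's theorem.} From the displayed identity, $E'$ consists precisely of those $\lambda \in (0,1)$ for which there exist pairs $u_n \neq u'_n$ with $s(u_n)=s(u'_n)$ and $|F_{u_n,u'_n}(\lambda)|$ decaying faster than any exponential in $n$. Indeed, if $\log\widetilde{\Delta}_n(\lambda)/n \to -\infty$ then for any $M>0$ and all $n$ large we must have $\widetilde\Delta_n(\lambda) < e^{-Mn} < 1$, forcing the minimizing pair to satisfy $s(u_n)=s(u'_n)$; a diagonal choice of $M$'s extracts the desired sequence. This is exactly the exceptional ``super-exponentially close'' set for the real-analytic family $\Phi_\lambda$, and since the parameter space is one-dimensional, Hochman's Theorem 1.8 gives $\dim_H E' = 0$.

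\textbf{Main obstacle.} The only delicate point is confirming that Hochman's theorem, stated for analytic parametrized families, applies to our setup with the non-polynomial (yet real-analytic) dependence $\lambda \mapsto \lambda^{\beta_i}$. This is indeed the case, since the proof relies on bounding the complexity of the algebra generated by the difference functions $F_{u,u'}$ on compact subintervals of $(0,1)$ via their Taylor expansions, which is insensitive to whether the $\beta_i$ are rational; one simply works on a compact subinterval $[a,b] \subset (0,1)$ at a time and takes a countable union. A secondary point is the handling of pairs $(u,u')$ for which $F_{u,u'} \equiv 0$: these correspond to ``universal'' exact overlaps present for every $\lambda$, and after identifying the corresponding reduced IFS they do not contribute any new super-exponentially close $\lambda$ to $E'$. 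Once these bookkeeping issues are dealt with, the conclusion $\dim_H E' = 0$ follows directly from the cited theorem.
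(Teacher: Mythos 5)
Your overall strategy --- invoking Hochman's Theorem 1.8 for the real-analytic one-parameter family $\lam\mapsto(g_{\lam,1},\ldots,g_{\lam,k})$ --- is the same as the paper's, and your first ``obstacle'' is a non-issue: that theorem is stated for real-analytic (not merely polynomial or algebraic) parametrized families, so the dependence $\lam\mapsto\lam^{\beta_i}$ needs no special treatment. The genuine gap is exactly the point you dismiss as ``secondary'': you never verify the non-degeneracy hypothesis of the theorem, namely that for every pair of distinct infinite words $u\neq u'$ there exists some $\lam$ separating them, equivalently that the analytic difference functions do not vanish identically. Your proposed way out --- that identically vanishing pairs ``do not contribute any new super-exponentially close $\lam$ to $E'$'' --- is not correct: if some $F_{u,u'}\equiv 0$ for finite words $u\neq u'$ of the same length, then $\wtil{\Delta}_n(\lam)=0$ for every $\lam$ and all large $n$, so $E'=(0,1)$ and the lemma as stated would be false. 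One must therefore \emph{prove} that no such pair exists; this is the entire content of the paper's proof and cannot be deferred to bookkeeping.

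The missing verification is short. For distinct $u,u'$, let $m+1$ be the first index where they differ. After cancellation of the common initial terms, $\psi_{u,u'}(\lam)=\sum_j v_j\lam^{\zeta_j}$, where the strictly smallest exponent is $\zeta=\beta_{u_1}+\cdots+\beta_{u_m}$ and it occurs with coefficient $t_{u_{m+1}}-t_{u'_{m+1}}$, which is nonzero because the translations $t_1,\ldots,t_k$ are assumed distinct; every other exponent exceeds $\zeta$ strictly because the $\beta_i$ are strictly positive. Hence $\lam^{-\zeta}\psi_{u,u'}(\lam)\to t_{u_{m+1}}-t_{u'_{m+1}}\neq 0$ as $\lam\to 0^+$, so $\psi_{u,u'}\not\equiv 0$ and Hochman's hypothesis holds. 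Adding this argument (and noting that the hypothesis in Hochman's theorem concerns infinite words, not only the finite-word differences $F_{u,u'}$ you consider) would complete your proof.
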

\begin{proof}
Note that for any distinct $u,u'\in\{1,\ldots,k\}^\N$, the map
\[
\psi_{u,u'}(\lam) = \sum_{n=0}^\infty \lam^{\beta_{u_1}}\cdots \lam^{\beta_{u_n}} t_{u_{n+1}} -\lam^{\beta_{u'_1}}\cdots \lam^{\beta_{u'_n}} t_{u'_{n+1}}
\]
is non-zero. Indeed, using that the translations $t_i$ are all different, and looking at the first place where the sequences $u$ and $u'$ differ, we can write $\psi_{u,u'}$ in the form $\sum_j  v_j\lam^{\zeta_j}$ for some non-zero numbers $v_j$ and a strictly decreasing (finite or infinite, but nonempty) sequence of $\zeta_j \ge 0$. This shows that the assumption of \cite[Theorem 1.8]{Hochman14} is met, giving the claim.
\end{proof}

\begin{proof}[Proof of Lemma \ref{lem:as-full-dim}]
 The idea is similar to the proof of Lemma \ref{lem:as-Fourier-decay}, but relying on Theorem \ref{thm:sup-exp-conc} together with Lemma \ref{lem:dim-0-scc} instead.

For the time being we let $s\in\N$ be arbitrary. Write $I''=I^s$, for each $\vec{i}\in I''$ let $K_{\vec{i}}= \prod_{\ell=1}^{s-1} \{1,\ldots,k_{\vec{i}_\ell}\}$, and consider the IFS
 \[
  (\Phi'')^{(\vec{i})} = \left(\lam_{\vec{i}_1}\cdots \lam_{\vec{i}_s} x  + \sum_{\ell=1}^{s-1} \lam_{\vec{i}_1}\cdots \lam_{\vec{i}_{\ell-1}}  \wtil{t}_{\lam,j_\ell}^{(\vec{i}_\ell)}:j\in K_{\vec{i}}\right).
 \]
Let also $\wtil{p}''_{\vec{i}}$ be the uniform probability vector on $K_{\vec{i}}$
 and (once again)
 \[
  F(\om) = \left( (\om_{js+1},\ldots, \om_{(j+1)s}) \right)_{j=0}^\infty \in (I'')^\N.
 \]
 Abusing notation slightly, for any $\ell\in\N$ we will also define $F:I^{s \ell}\to (I'')^\ell$ in the obvious way:
 \[
  F(\om) =\left( (\om_{js+1},\ldots, \om_{(j+1)s}) \right)_{j=0}^{\ell-1}.
 \]

 Similar to Lemma \ref{lem:as-Fourier-decay}, an inspection of the definitions shows that, denoting the measures generated by the model $\Sigma''_\lam=\left(((\Phi''_\lam)^{(\vec{i})})_{\vec{i} \in I''}, (\widetilde{p}''_{\vec{i}})_{\vec{i} \in I''}, F(\PP)\right)$  by $\wtil{\eta}_\lam^{\om''}$ (we use the same notation as for the measures in Lemma \ref{lem:as-Fourier-decay}, but the measures are different), it holds that
\begin{equation} \label{eq:double-prime-model-to-old-model}
 \wtil{\eta}_\lam^{(F(\om))}=(\eta''_\lam)^{\pom}.
\end{equation}
Observe that for any $\om\in I^{s\ell}$,
\begin{equation}\label{eq:ineqdel}
 \Delta_\ell^{(F(\om))}(\Sigma''_\lam) \ge \Delta_{s\ell}^\pom(\Sigma_\lam).
\end{equation}
To see this, let $(\X''_{\lam,\ell})^\pom$ denote the finite code spaces for the model $X''_\lam$, and note that if $u:=(u^{(j)})_{j=1}^\ell, v= (v^{(j)})_{j=1}^\ell \in (\X''_{\lam,\ell})^\pom$ and we write
\begin{align*}
\wtil{u} &= \left(u^{(1)}_1,\ldots, u^{(1)}_{s-1},i_0,\ldots,u^{(\ell)}_1,\ldots, u^{(\ell)}_{s-1},i_0\right),\\
\wtil{v} &= \left(v^{(1)}_1,\ldots, v^{(1)}_{s-1},i_0,\ldots,v^{(\ell)}_1,\ldots, v^{(\ell)}_{s-1},i_0\right)
\end{align*}
where $i_0$ is an arbitrary element of $I$, then
\[
(t''_u)^{(F(\om))} - (t''_v)^{(F(\om))} = t_{\wtil{u}}^\pom-t_{\wtil{v}}^\pom,
\]
where $t, t''$ denote the translations with respect to models $\Sigma_\lam,\Sigma''_\lam$, respectively. Indeed, this holds simply because the model $\Sigma''_\lam$ is obtained by ``skipping every $s$-th term and recoding'' from $\Sigma_\lam$. Recalling \eqref{eq:def-exp-concentration}, we see that \eqref{eq:ineqdel} holds.

In turn, it follows from Lemma \ref{lem:non-hom-to-random-hom}(ii) that for any $\om\in I^{s\ell}$,
\[
  \Delta_{s\ell}^\pom(\Sigma_\lam)  \ge \wtil{\Delta}_{rs\ell}(\lam).
\]
By Lemma \ref{lem:dim-0-scc}, and since $\wtil{\Delta}_{n}$ is non-increasing in $n$, there is a null set $E\subset (0,1)$ such that $\log \wtil{\Delta}_{rs\ell}(\lam)/\ell \nrightarrow -\infty$ for all $\lam\in (0,1)\setminus E$. We deduce from the above discussion that, given $\lam\in (0,1)\setminus E$, there are a sequence $n_j\to\infty$ and $M>-\infty$ such that
\[
\log \Delta_{n_j}^{(\om'')}(\Sigma''_\lam)/{n_j} \ge M \quad\text{for all }j \text{ and all }\om''\in (I'')^{n_j} \text{ such that } |(\X_{\lam, n_j}'')^{(\om'')}|>1.
\]
Since for $F(\PP)$-almost all $\om''$ it holds that $|(\X_{\lam, n}'')^{(\om'')}|>1$ for $n\ge n(\om'')$, we have verified that the hypotheses of  Theorem \ref{thm:sup-exp-conc} hold for $\Sigma''_\lam$. We deduce from Theorems \ref{thm:exact} and \ref{thm:sup-exp-conc} and the identity \eqref{eq:double-prime-model-to-old-model} that for all $\lam\in(0,1)\setminus E$ and $\PP$-almost all $\om$, the measure $(\eta''_\lam)^{\pom}$ has exact dimension $\min(\textup{s-dim}(\Sigma''_\lam),1)$.

Hence, taking into account Lemma \ref{lem:non-hom-to-random-hom}(v) and our choice $\delta=\eps/2$, in order to finish the proof it remains to show that if $s$ is chosen large enough, then $\textup{s-dim}(\Sigma''_\lam)>1$ provided that $\textup{s-dim}(\Sigma_\lam)>1+\eps/3$. Firstly, we note that for $\vec{i}\in I''=I^s$,
\[
H(\wtil{p}''_{\vec{i}})  = \log |K_{\vec{i}}| = \sum_{\ell=1}^{s-1} \log k_{i_\ell}
= \sum_{\ell=1}^{s-1} H(\wtil{p}_{\vec{i}_\ell}).
\]
Hence, denoting the marginal of $\PP$ by $(q_i)_{i\in I}$ and recalling \eqref{eq:functions-random-non-hom}, we can calculate:
\begin{align*}
\textup{s-dim}(\Sigma''_\lam) &= \frac{\sum_{\vec{i}\in I^s} H(\wtil{p}''_{\vec{i}})\, q_{\vec{i}_1}\cdots q_{\vec{i}_s}    }{-\sum_{\vec{i}\in I^s} \log(\lam^{\gamma_{\vec{i}_1}}\cdots \lam^{\gamma_{\vec{i}_s}})\, q_{\vec{i}_1}\cdots q_{\vec{i}_s} }\\
&= \frac{(s-1)\sum_{i\in I} H(\wtil{p}_i) q_i    }{-s \sum_{i\in I} \log(\lam^{\gamma_i}) q_i}\\
&= (1-1/s) \,\textup{s-dim}(\Sigma_\lam).
\end{align*}
This confirms that we can ensure that $\textup{s-dim}(\Sigma''_\lam)>1$ by taking $s$ large enough in terms of $\eps$, completing the proof.
\end{proof}


\end{document}